\documentclass{article}
\usepackage{graphicx}
\usepackage[utf8]{inputenc}
\usepackage{amsfonts}
\usepackage{amsthm}
\usepackage{amsmath}
\usepackage{amssymb}
\usepackage{tikz}
\usepackage{tikz-cd}
\usepackage{parskip}
\usepackage{tikz}
\usepackage{biblatex}
\usepackage{subcaption}
\usetikzlibrary{patterns}

\title{Widths of Complements of Skeleta}
\date{}
\author{Elliot Gathercole}
\numberwithin{equation}{section}
\numberwithin{figure}{section}
\begin{document}

\newcommand{\Size}{\text{Size}}

\maketitle
\theoremstyle{plain}
\newtheorem{thm}{Theorem}

\newtheoremstyle{TheoremNum}
    {\topsep}{\topsep}              
    {\itshape}                      
    {}                              
    {\bfseries}                     
    {.}                             
    { }                             
    {\thmname{#1}\thmnote{ \bfseries #3}}
\theoremstyle{TheoremNum}
\newtheorem{thmn}{Theorem}

\theoremstyle{plain}
\newtheorem{prop}{Proposition}[section]
\newtheorem{cor}[prop]{Corollary}
\newtheorem{lem}[prop]{Lemma}

\theoremstyle{definition}
\newtheorem{defn}[prop]{Definition}
\newtheorem{ex}[prop]{Example}
\newtheorem{hyp}[prop]{Hypothesis}

\begin{abstract}
    We establish some sufficient conditions for the Lagrangian skeleton of the affine complement of an effective ample $\mathbb{Q}$-divisor in a smooth rationally connected projective variety to be a Lagrangian barrier in the sense of Biran, and establish bounds on the Gromov width of the complement of the skeleton. We particularly focus on hyperplane arrangements in projective space, where we obtain tight bounds in two dimensions when the divisor is a generic collection of at least three lines.
\end{abstract}

\section{Introduction}
\subsection{Background}
The problem of embedding balls in symplectic manifolds is much-studied: dating back to Gromov's famous non-squeezing theorem (\cite{Gro}), it is known that there are constraints on such embeddings which do not come simply from volume considerations. More precisely, we may consider the Gromov width, a quantitative invariant associated to a symplectic manifold, which records the largest possible ball that can be embedded.
\begin{defn}[Gromov Width]
Let $B_r\subset \mathbb{C}^n$ denote the standard ball of radius $r>0$, equipped with the standard symplectic form $\omega_{st}$. Recall that the Gromov width of a symplectic manifold $(M,\omega)$, denoted $W_G(M)$,  is the supremum over all $\pi r^2$ such that there exists a symplectic embedding
\begin{equation}
    \iota:(B_r,\omega_{st})\rightarrow (M,\omega).
\end{equation} 
\end{defn}

In \cite{Biran}, Biran introduced the phenomenon of Lagrangian barriers: subsets of a symplectic manifold which intersect all balls of a certain radius, which are isotropic CW-complexes (for example, Lagrangian submanifolds), which in particular have zero volume. These subsets give us examples of intersection phenomena which are symplectomorphism-invariant, that do not arise from purely topological or volume considerations. 
\begin{defn}[Barrier]
A closed subset $B\subset M$ of a symplectic manifold is a barrier if $W_G(M\setminus B)<W_G(M)$.
\end{defn}
In \cite{Biran}, Biran showed that in a K\"ahler manifold $(M,\omega)$, either symplectically aspherical or of complex dimension $\leq 3$, if $D\subset M$ is a smooth complex hypersurface Poincar\'e dual to $k[\omega]$ for some $k\in\mathbb{N}$, then the Lagrangian skeleton $L$ of the affine variety $M\setminus D$ satisfies $W_G(M\setminus L)\leq \frac{1}{k}$, so $L$ will be a barrier for sufficiently large $k$. This was later extended to all K\"ahler manifolds by Lu in \cite{Lu}.

This result works by showing that $M\setminus L$ is symplectomorphic to the normal disc bundle of $D$ of radius $\frac{1}{k}$, then using a fibrewise compactification of this normal bundle to construct holomorphic curves in the class of a fibre. Then, given a symplectic embedding $\iota:B_r\rightarrow M\setminus L$, Gromov-Witten invariants are used to show that these curves persist after deforming the almost-complex structure, to obtain a curve which is a complex subvariety of $B_r$, the existence of which implies that $\pi r^2<\frac{1}{k}$.

Another example of this phenomenon is \cite{Bre}, which shows that Lagrangian pinwheels in $\mathbb{CP}^2$ are barriers. 

\subsection{Symplectic Setting}
We seek to obtain analogous results to Biran in \cite{Biran} concerning the Lagrangian skeleta of complements of more complicated (i.e. non-smooth) divisors $D$, for instance, normal crossings divisors. In order to state our main theorem, we must first introduce some terminology.

Let $(M,\omega)$ be a compact symplectic manifold. We are interested in the case when $(M,\omega)$ is obtained from a projective variety, together with a choice of ample divisor. To model such a situation symplectically, we will consider a stratified symplectic subvariety $V$ in $M$, admitting a system of commuting Hamiltonians $r_v$ for $v\in V$, as described in \cite{Ga}.

\begin{defn}[Stratified Symplectic Subvariety]
A stratified symplectic subvariety in $M$ is a poset $V$ and a collection of disjoint embedded connected symplectic submanifolds $\mathring{D}_v\subset M$ of positive real codimension for $v\in V$, such that the closure of $\mathring{D}_v$ in $M$ is the union $\bigcup_{u\leq v}\mathring{D}_u$. 
We denote by $V_{\operatorname{top}}\subset V$ the subset corresponding to submanifolds of real codimension $2$.
\end{defn}

We will impose the following positivity conditions relating the symplectic form and first Chern class to $V$.
\begin{defn}[Cohomology Classes Supported on $V$]
Let $X=M\setminus \bigcup_{v\in V}\mathring{D}_v$. We define the classes supported on $V$, $C_+(V)\subset H^2(M,X;\mathbb{Q})$ as follows. For each $v\in V_{\operatorname{top}}$ we obtain a class $PD^{rel}([\mathring{D}_v])\in H^2(M,X;\mathbb{Q})$ given by intersection with the submanifold $\mathring{D}_v$. Let
\begin{equation}
    C_+(V)=\bigoplus_{v\in V_{\operatorname{top}}} \mathbb{Q}_{> 0}PD^{rel}([\mathring{D}_v]).
\end{equation}
\end{defn}
We will require that the classes $[\omega]$ and $c_1(TM)$ are supported on $V$, i.e.
\begin{hyp}
\begin{equation}
\label{positivity}
\begin{split}
[\omega]=\pmb{\kappa}|_{H_2(M)}, \quad & \pmb{\kappa}\in C_+(V)
\\2c_1=\pmb{\lambda}|_{H_2(M)}, \quad & \pmb{\lambda}\in C_+(V)
\end{split}
\end{equation}
\end{hyp}

\begin{defn}[System of Commuting Hamiltonians]
\label{sch_def}
A system of commuting Hamiltonians for $V$ is a choice of radial Hamiltonian for each $v\in V$: a real function $r_v$ defined on a neighbourhood of $\mathring{D}_v$, which generates a Hamiltonian $\mathbb{R}/\mathbb{Z}$-action, such that, possibly after shrinking these neighbourhoods, the circle action preserves $\mathring{D}_w$ whenever $v<w$, the functions $r_v$ commute pairwise wherever they are both defined, $r_v\geq 0$ everywhere, and the fixed locus of the circle action is exactly $r_v^{-1}(0)=\mathring{D}_v$. We can restrict to smaller neighbourhoods so that the positive level sets of the $r_v$ all intersect transversely.
\end{defn}
\begin{defn}[Weight of a radial Hamiltonian]
In the setting of Definition \ref{sch_def}, we denote by $w_v$ the total weight of the isotropy representation of the circle action generated by $r_v$ at any point in $\mathring{D}_v$.
\end{defn}

\begin{defn}[Action of a radial Hamiltonian]
\label{action_def}
To each radial Hamiltonian $r_v$, we can associate a class $\delta_v\in \pi_2(M,X)$, represented by a path of $\mathbb{R}/\mathbb{Z}$-orbits generated by $r_v$, starting at a fixed point in $\mathring{D}_v$ and ending on some orbit contained in $X$. 

From the lifts $\pmb{\kappa},\pmb{\lambda}$ of $[\omega]$ and $2c_1$, we obtain the following numbers for each $v\in V$: 
\begin{equation}
\begin{split}
\kappa_v&=\pmb{\kappa}(\delta_v),
\\\lambda_v&=\pmb{\lambda}(\delta_v).
\end{split}
\end{equation}
\end{defn}

To define the Lagrangian skeleton of $X$, we must first restrict  our choice of Liouville primitives to those which are adapted to our system of commuting Hamiltonians, and which, without loss of generality, represent the class $\pmb{\kappa}$.
\begin{defn}[Adapted Primitive]
    A Liouville primitive $\theta$ for $\omega|_X$ (so $\theta=d\omega|_X$) is adapted if it is invariant under the circle actions generated by $r_v$ for each $v$, locally near $\mathring{D}_v$.

    We say $\theta$ represents the class $\pmb{\kappa}$ if $[\omega,\theta]=\pmb{\kappa}$, i.e.
    \begin{equation}
        \pmb{\kappa}(u)=\int_u\omega-\int_{\partial u}\theta
    \end{equation}
    for all $u\in \pi_2(M,X)$.

    Note that there always exists an adapted primitive representing $\pmb{\kappa}$ by an averaging construction.
\end{defn}

Let $\theta\in\Omega^1(X)$ be an adapted primitive for $\omega|_X$, representing $\pmb{\kappa}$, and let $L$ denote the Lagrangian skeleton of $(X,\theta)$. Let $Z$ denote the Liouville vector field on $X$ dual to $\theta$. We can define a continuous function $\rho^0:M\rightarrow \mathbb{R}_{\geq 0}$, such that $\rho^0|_{\bigcup_{v\in V}\mathring{D}_v}=1$ and along flowlines of $Z$, $d\rho^0(Z)=\rho^0$.

\subsection{Algebro-geometric Setting}
Given an effective ample $\mathbb{Q}$-divisor $D$ in a projective variety $M$, we obtain a canonical choice of symplectic form $\omega$ coming from the associated embedding into projective space, and a lift $\pmb{\kappa}\in H^2(M,X;\mathbb{Q})$ of $[\omega]$ represented by $D$ under Poincar\'e duality. The underlying subset of $D$ can be given the structure of a stratified symplectic subvariety $V$ in the obvious way (so the top-dimensional strata correspond to irreducible components of $D$), which supports the class $\pmb{\kappa}$. Suppose the components of $D$ also support $c_1(TM)$ as an effective $\mathbb{Q}$-divisor. Then we similarly obtain a lift $\pmb{\lambda}$ of $2c_1(TM)$ supported on $V$. Under certain conditions on $D$, we can construct a system of commuting Hamiltonians for $V$.

If $D$ is an orthogonal symplectic crossings divisor, a system of commuting Hamiltonians can be obtained from a standard neighbourhood of $D$ (see \cite{TMZ}). In \cite{Ga} we show how to apply this model to a divisor $D$ which has singularities which are orthogonal symplectic crossings, or isolated quasihomogeneous singularities (and some other cases). In Section \ref{hyperplane_arr_construction}, we will give an explicit construction of such a model for any union of hyperplanes in $\mathbb{CP}^n$, the particular focus of this paper, which allows for degenerate configurations, and does not require the hyperplanes to be orthogonal.

In this setting, the affine complement $X$ is equipped with a standard (up to deformation) Liouville primitive $\theta$ for $\omega|_X$ which represents $\pmb{\kappa}$. Under good conditions on the system of commuting Hamiltonians, we may take $\theta$ to be adapted.

\subsection{Statement of Results}
Our results will give bounds on the Gromov width of the subsets $\{\rho^0> \sigma\}$. In particular, we will relate $W_G(\{\rho^0> \sigma\})$ to the quantities,
\begin{equation}
    \tilde{\sigma}_{crit}=\max_{v\in V}\left(\frac{\lambda_v-2w_v}{\lambda_v}\right),
    \label{sigma_crit}
\end{equation} 
\begin{equation}
\label{lambda_min}
\kappa_{\min}=\min\{\pmb{\kappa}(u)| u\in \pi_2(M,X), \pmb{\kappa}(u)>0\}\in\mathbb{Q}_{\geq 0},  
\end{equation}
and the minimal Chern number
\begin{equation}
    N_M=\min\{c_1(u)|u\in\pi_2(M), c_1(u)>0\}\in\mathbb{N},
\end{equation}
as well as a choice of class $A\in H_2(M)$ satisfying the following hypothesis.
\begin{hyp}
\label{rat_con}
    For any two points $x,y\in M$, and any $\omega$-compatible almost-complex structure $J$, there exists a $J$-holomorphic curve of genus $0$, representing the homology class $A\in H_2(M;\mathbb{Z})$, passing through $x$ and $y$.
\end{hyp}
For instance, it is sufficient that the appropriate genus-$0$ Gromov-Witten invariant is well-defined and not zero. For a projective variety, this is equivalent to being rationally connected in many cases (\cite{Tian_2012}).

\begin{thmn}[\ref{T:1}]
Suppose $A\in H_2(M)$ satisfies Hypothesis \ref{rat_con}. 

If $\tilde{\sigma}_{crit}\leq 0$, then
\begin{equation}
W_G(\{\rho^0> \sigma\})\leq \omega(A)-2\sigma N_M -(1-\sigma)\lceil1-\tilde{\sigma}_{crit}N_M\rceil\kappa_{\min}.
\end{equation}
\end{thmn}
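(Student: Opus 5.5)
We follow Biran's strategy: produce a holomorphic curve through the centre of an embedded ball, and bound its area inside the ball by the energy it can carry inside $\{\rho^0>\sigma\}$.

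\textbf{Setup.} Let $\iota:B_r\to\{\rho^0>\sigma\}$ be a symplectic embedding. Choose an $\omega$-compatible almost-complex structure $J$ with three properties:
\begin{itemize}
\item it agrees with $\iota_*J_{st}$ on $\iota(B_{r-\epsilon})$;
\item it is adapted to the system of commuting Hamiltonians near each $\mathring{D}_v$, so each $\mathring{D}_v$ is $J$-holomorphic and the $r_v$-circle actions are $J$-holomorphic near the strata;
\item it is cylindrical, i.e. compatible with the Liouville flow of $Z$, on the region $\{\rho^0\le \sigma\}$ away from $\iota(B_r)$.
\end{itemize}
By Hypothesis \ref{rat_con}, for $x=\iota(0)$ and a generic $y$ there is a genus-$0$ $J$-curve $u$ in class $A$ through $x$ and $y$. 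This curve may be a stable map, with a component through $x$.

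\textbf{Decomposition.} Write $u$ as $u_1\cup u_2$, where $u_1=u\cap\{\rho^0>\sigma\}$ and $u_2$ is the remainder. Then
\begin{equation*}
\omega(A)=\int_{u_1}\omega+\int_{u_2}\omega .
\end{equation*}
Monotonicity gives $\int_{u_1}\omega\ge\int_{u\cap\iota(B_r)}\omega\ge \pi r^2$. It therefore suffices to show
\begin{equation*}
\int_{u_2}\omega\ \ge\ 2\sigma N_M+(1-\sigma)\lceil 1-\tilde\sigma_{crit}N_M\rceil\kappa_{\min}.
\end{equation*}

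\textbf{Step 1: rescaling the energy.} Split $\omega=d\theta+(\text{contributions localised at }D)$ using the lift $\pmb\kappa$. Since $\rho^0$ scales linearly along $Z$, the portion of $u$ lying in $\{\rho^0\le\sigma\}$ has $\omega$-energy equal to $\sigma$ times the topological quantity $\pmb\kappa(u)$, plus boundary terms $\int\theta$ on the level set $\{\rho^0=\sigma\}$. Stokes together with the positivity of $J$-curves lets us handle these boundary terms.

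The contribution of $u$ near the strata is then recorded in terms of intersection multiplicities $m_v$ with the $\mathring{D}_v$. This gives $\pmb\kappa(u)=\sum m_v\kappa_v$ and $\pmb\lambda(u)=\sum m_v\lambda_v$.

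\textbf{Step 2: counting intersections with the divisor.} This is where $\tilde\sigma_{crit}\le0$ enters, i.e. $2w_v\ge\lambda_v$ for all $v$.
\begin{itemize}
\item \emph{Multiplicity bound.} Near $\mathring{D}_v$ the curve meets the stratum with multiplicity governed by the weight $w_v$ of the isotropy representation. Comparing $\pmb\lambda(u)=2c_1(A)$ with the local contributions $\sum m_v\lambda_v$ shows that $u$ must meet $D$ in at least $\lceil 1-\tilde\sigma_{crit}N_M\rceil$ "units". Each unit has $\pmb\kappa$-value at least $\kappa_{\min}$, because $\pmb\kappa$ is positive on effective relative classes and $\kappa_{\min}$ is the minimal positive value.
\item \emph{Chern-number term.} The term $2\sigma N_M$ comes from the part of the curve in the collar: that part is a union of $J$-holomorphic pieces asymptotic to Reeb orbits, with $c_1$ at least $N_M$ by minimality.
\end{itemize}
Combining the two with the weights $\sigma$ and $1-\sigma$ from the $\rho^0$-scaling gives the required lower bound on $\int_{u_2}\omega$. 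This is a convex interpolation between the $\sigma=0$ bound $\lceil\cdot\rceil\kappa_{\min}$ and the $\sigma=1$ contribution.

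\textbf{Main obstacle.} The hardest step is Step 2, in particular making the multiplicity count rigorous at singular strata (non-top $v$). There the curve might pass through deeper strata, and one must check that each intersection contributes at least its weight.

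The first approach I would try is a neck-stretching/SFT limit along $\{\rho^0=\sigma\}$. In that limit the bottom-level pieces in the normal-crossing model are analysed via the commuting circle actions. Their indices are computed with $\lambda_v$ and $w_v$, and the condition $\lambda_v\le 2w_v$ is used to show that the relevant moduli spaces have nonnegative index only when the claimed intersection count holds.
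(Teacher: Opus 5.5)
There is a genuine gap. The source of $\lceil 1-\tilde\sigma_{crit}N_M\rceil$ is missing, and your area bookkeeping cannot produce the $\kappa_{\min}$ term.

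\textbf{The count.} In Step 2, comparing $\pmb{\lambda}(u)=2c_1(A)$ with $\sum_v m_v\lambda_v$ is a homological identity for a closed curve, not an inequality. The weights $w_v$ play no role in the intersection numbers of a closed curve with $D$, so no count of ``units'' follows. In the paper the number is $k-1$. Here $k$ is the number of positive punctures of the simple bottom-level curve $\tilde u_-$ in $\hat X$, obtained by stretching along $Y^R_{\log\sigma+f}$, where $f$ is a small perturbation making all Reeb orbits nondegenerate. The second point must be taken on $L$, not generic: that is what forces a bottom component carrying the point constraint.

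The weights enter only through Conley--Zehnder indices. For iterates with $Sa(\gamma)\in\mathbb{N}^{I(\gamma)}$, the linearised flow relative to the equivariant trivialisation is the unipotent path $\mathrm{id}+t\dot A$. This pins down the mean index in the outer-cap trivialisation, and Long's iteration inequality then gives $\mu_{CZ}^{\tau_{out}}\le -2S\sum_v a_vw_v+n-1$ (Lemmas~\ref{integral_orbit_index}, \ref{index_estimate}, \ref{orbit_properties_estimates}). The outer caps also give $2c_1^{\tau_{out}}(\tilde u_-)=\sum_i S_i\sum_v a_v\lambda_v$. Inserting both into $\mathrm{ind}(\tilde u_-)\ge 0$ (Fredholm regularity with the point constraint) yields $0\le 2k-4+\sum_i S_i\sum_v a_v(\lambda_v-2w_v)\le 2k-4+2\tilde\sigma_{crit}c_1(\Sigma)$. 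Hence $k\ge\lceil 2-\tilde\sigma_{crit}N_M\rceil$. Your closing paragraph gestures at this, but it is the heart of the proof, not a fix for deep strata.

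\textbf{The decomposition.} Splitting $u$ by the regions $\{\rho^0>\sigma\}$ and $\{\rho^0\le\sigma\}$ puts the saving in the wrong place. The paper bounds only $\omega(u_+\cap\{\rho^0\ge\sigma\})$ for the single top-level component $u_+$ through $\iota(0)$:
\begin{itemize}
\item Genus zero forces at least $k$ top-level components in $M\setminus L$, each of area at least $\kappa_{\min}$ since $\omega$ is supported on $V$. So $\omega(u_+)\le\omega(A)-(k-1)\kappa_{\min}$.
\item Positivity of $d\beta$ on the cylindrical end gives $\omega(u_+\cap\{\rho^0\ge\sigma\})\le\omega(u_+)-\sigma\sum_j\tilde S_j$.
\item Capping the rest of the building with outer caps of area $O(\epsilon(R))$ gives a sphere $\Sigma$ with $\omega(\Sigma)\ge 2N_M$. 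This produces the $2\sigma N_M$ term and the factor $(1-\sigma)$ (Lemmas~\ref{area_bound_simple}, \ref{area_bound_advanced}).
\end{itemize}
The saving therefore comes from discarding all of $u_1,\dots,u_{k-1}$, including their parts near $D$ (where $\rho^0=1$). Your decomposition counts those parts in your $u_1$ and cannot discard them.

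The inequality you reduce to, $\int_{u\cap\{\rho^0\le\sigma\}}\omega\ge 2\sigma N_M+(1-\sigma)\lceil 1-\tilde\sigma_{crit}N_M\rceil\kappa_{\min}$, has no source. The set $\{\rho^0\le\sigma\}$ is an exact Liouville sublevel disjoint from $D$, where the area of $u$ is $\int_{u\cap\{\rho^0=\sigma\}}\theta$. So, contrary to Step 1, $\pmb{\kappa}$ and intersection multiplicities play no role there. The natural lower bound there ($\sigma$ times the total period of the Reeb orbits crossed, after stretching) recovers at best the $2\sigma N_M$ term. Likewise, $2\sigma N_M$ does not come from Chern numbers of collar pieces.
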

If a class $A$ with suitable properties exists, we can establish sufficient conditions for the skeleton $L$ to be a barrier. For example,
\begin{thmn}[\ref{T:2}]
    Take $M=\mathbb{CP}^n$, and $\omega$ the Fubini-Study form , normalised so that $\omega(\mathbb{CP}^1)=1$. If $\tilde{\sigma}_{crit}\leq 0$, then $L\subset \mathbb{CP}^n$ is a Lagrangian barrier. Moreover,
    \begin{equation}
        W_G(\mathbb{CP}^n\setminus L)\leq 1-\lceil 1-\tilde{\sigma}_{crit}(n+1)\rceil \kappa_{\min}.
    \end{equation}
\end{thmn}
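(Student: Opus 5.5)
Theorem \ref{T:2} will be deduced from Theorem \ref{T:1} by taking $M=\mathbb{CP}^n$, $A=[\mathbb{CP}^1]$ the class of a line, and $\sigma\to 0$.

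\textbf{Step 1: Hypothesis \ref{rat_con} holds for the line class.} The genus-$0$ two-point Gromov--Witten invariant of $\mathbb{CP}^n$ in the line class, with two point constraints, equals $1$, since exactly one line passes through two distinct points. This invariant is well defined because $\mathbb{CP}^n$ is monotone. By Gromov compactness, for every $\omega$-compatible $J$ and all $x,y$ there is a $J$-holomorphic genus-$0$ curve in class $A$ through $x$ and $y$. This curve cannot bubble further, because $A$ has minimal positive energy. Hence $\omega(A)=1$, and $N_M=n+1$ since $c_1=(n+1)[\omega]$ and the generator of $\pi_2$ pairs to $n+1$.

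\textbf{Step 2: Relating $\mathbb{CP}^n\setminus L$ to the sets $\{\rho^0>\sigma\}$.} Points of $L$ are exactly the points whose forward Liouville flow stays in a compact set. By construction $\rho^0$ vanishes on $L$ and is positive off $L$, since $\rho^0$ grows along $Z$-flowlines and equals $1$ on $D$. Therefore
\[
\mathbb{CP}^n\setminus L=\bigcup_{\sigma>0}\{\rho^0>\sigma\},
\]
an increasing union of open sets. Any embedded closed ball in $\mathbb{CP}^n\setminus L$ is compact, so it lies in some $\{\rho^0>\sigma\}$. This gives
\[
W_G(\mathbb{CP}^n\setminus L)\le \lim_{\sigma\to 0^+} W_G(\{\rho^0>\sigma\}).
\]
Here we use that $W_G$ is a supremum over balls of radius $r$, each of which is exhausted by closed balls of smaller radius.

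\textbf{Step 3: Apply Theorem \ref{T:1} and pass to the limit.} Theorem \ref{T:1} gives
\[
W_G(\{\rho^0>\sigma\})\le 1-2\sigma(n+1)-(1-\sigma)\lceil 1-\tilde\sigma_{crit}(n+1)\rceil\kappa_{\min}.
\]
As $\sigma\to 0$ the right-hand side tends to the stated bound.

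\textbf{Step 4: The barrier property.} We know $W_G(\mathbb{CP}^n)=1$ for the normalisation $\omega(\mathbb{CP}^1)=1$: the ball of capacity $1$ embeds into the complement of a hyperplane, and Gromov's non-squeezing via lines gives the upper bound. So it suffices that the subtracted term is strictly positive.

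Since $\tilde\sigma_{crit}\le 0$, we have $1-\tilde\sigma_{crit}(n+1)\ge 1$, so the ceiling is at least $1$.

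It remains to show $\kappa_{\min}>0$. The set $\kappa(\pi_2(M,X))$ is a finitely generated subgroup of $\mathbb{Q}$, since $\pmb\kappa$ is a rational combination of finitely many divisor classes, each integral on relative classes. It is therefore discrete and cyclic. It is nonzero, because $\kappa(\delta_v)>0$ for every $v$. Hence its minimal positive element exists and is positive.

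\textbf{Main obstacle.} The delicate step is Step 2: we need $\rho^0$ to vanish precisely on $L$, so that the sets $\{\rho^0>\sigma\}$ exhaust the complement of the skeleton. This depends on the completeness of the Liouville flow and on the continuity of $\rho^0$ near $D$. I would establish it from the adapted form of $\theta$ near each $\mathring D_v$, where $\rho^0$ is expressed through the radial Hamiltonians $r_v$.
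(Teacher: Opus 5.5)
Your proposal is correct and follows the same route as the paper: take $A$ to be the line class, read off $\omega(A)$ and $N_{\mathbb{CP}^n}=n+1$, and apply Theorem \ref{T:1}. You also make explicit two steps that the paper's one-line proof leaves implicit: the limit $\sigma\to 0$ via the exhaustion $\mathbb{CP}^n\setminus L=\bigcup_{\sigma>0}\{\rho^0>\sigma\}$, and $\kappa_{\min}>0$ for the strict barrier inequality. You work consistently in the stated normalisation $\omega(\mathbb{CP}^1)=1$, whereas the paper's proof writes $\omega(A)=2(n+1)$; that only rescales the bound.
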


We can now obtain some specific examples of Lagrangian barriers $L\subset\mathbb{CP}^n$, together with bounds on the Gromov width of $\mathbb{CP}^n\setminus L$, as corollaries of Theorem \ref{proj_skel}, and compare them to existing results.

\begin{cor}[Generic Hyperplanes in $\mathbb{CP}^n$]
\label{generic_hyperplanes}
Let $M=\mathbb{CP}^n$, and let $D=H_1+...+H_\ell$ be a union of $\ell\geq n+1$ hyperplanes in general position. Let $L$ denote the skeleton of $X=M\setminus D$. Now, $\tilde{\sigma}_{crit}=\frac{n+1-\ell}{n+1}$, so Theorem \ref{proj_skel} tells us that 
\begin{equation}
    W_G(\mathbb{CP}^n\setminus L)\leq \frac{n}{\ell}.
    \label{generic_hyperplanes_equation}
\end{equation}
\end{cor}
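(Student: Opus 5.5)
The plan is to apply Theorem \ref{proj_skel} directly. The whole content of the corollary is the computation of the quantities $\tilde{\sigma}_{crit}$ and $\kappa_{\min}$ for this arrangement, followed by substitution. The hypotheses are met as follows. $\mathbb{CP}^n$ carries the Fubini–Study form with $\omega(\mathbb{CP}^1)=1$. We take the lift $\pmb{\kappa}=\frac{1}{\ell}\sum_i PD^{rel}[H_i]$, which restricts to $[\omega]$ since each $H_i$ has degree one. We take $\pmb{\lambda}=\frac{2(n+1)}{\ell}\sum_i PD^{rel}[H_i]$, which restricts to $2c_1=2(n+1)[\omega]$. Both lie in $C_+(V)$. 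The system of commuting Hamiltonians, and an adapted primitive representing $\pmb{\kappa}$, are supplied by the construction of Section \ref{hyperplane_arr_construction} for hyperplane arrangements.

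The first step is to describe $V$. Since the hyperplanes are in general position, the strata are indexed by subsets $I\subset\{1,\dots,\ell\}$ with $1\le |I|=k\le n$. The stratum $\mathring{D}_I$ is the open part of $\bigcap_{i\in I}H_i$, of complex codimension $k$. Near $\mathring{D}_I$ the arrangement is locally a normal crossing of $k$ coordinate hyperplanes. I expect the radial Hamiltonian $r_I$ from the construction to generate the diagonal circle action rotating all $k$ normal coordinates with weight one. Its isotropy representation then has total weight $w_I=k$.

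The second step is to compute the actions. The class $\delta_I$ is swept out by orbits shrinking to a point of $\mathring{D}_I$, so it is a small disc meeting each $H_i$, $i\in I$, transversely once with positive sign and missing the others. Hence $\kappa_I=k/\ell$ and $\lambda_I=2(n+1)k/\ell$. This gives
\begin{equation*}
\frac{\lambda_I-2w_I}{\lambda_I}=1-\frac{2k\ell}{2(n+1)k}=\frac{n+1-\ell}{n+1},
\end{equation*}
which is independent of $I$. Therefore $\tilde{\sigma}_{crit}=\frac{n+1-\ell}{n+1}$, and this is $\le 0$ precisely because $\ell\ge n+1$. For $\kappa_{\min}$, every $u\in\pi_2(M,X)$ has $\pmb{\kappa}(u)=\frac1\ell\sum_i (u\cdot H_i)\in\frac1\ell\mathbb{Z}$. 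The value $1/\ell$ is attained by a small disc normal to a single $H_i$ at a point off the other hyperplanes, so $\kappa_{\min}=1/\ell$.

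Finally we substitute into Theorem \ref{proj_skel}. We have $\lceil 1-\tilde{\sigma}_{crit}(n+1)\rceil=\lceil 1-(n+1-\ell)\rceil=\ell-n$, so the bound reads $1-(\ell-n)/\ell=n/\ell$, as claimed. The main point needing care is the second half of the first step. One must check that the explicit model of Section \ref{hyperplane_arr_construction} really produces, at each $k$-fold intersection, a circle action whose isotropy weights are all $1$, so that $w_I=k$ and $\delta_I$ meets each hyperplane once. If the construction allowed other weights $(a_1,\dots,a_k)$, one would instead get $w_I=\sum a_j$ and $\lambda_I=\frac{2(n+1)}{\ell}\sum a_j$. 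The ratio would be unchanged, so the conclusion is robust either way.
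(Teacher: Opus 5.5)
Your proposal is correct and takes the paper's route: the corollary is just Theorem \ref{proj_skel} with $\tilde{\sigma}_{crit}=\frac{n+1-\ell}{n+1}$ and $\kappa_{\min}=\frac{1}{\ell}$. These values come from the symmetric lifts $\pmb{\kappa}=\frac{1}{\ell}\sum_i PD^{rel}[H_i]$ and $\pmb{\lambda}=\frac{2(n+1)}{\ell}\sum_i PD^{rel}[H_i]$, together with $w_v=\operatorname{codim}_{\mathbb{C}}v$ and $\lambda_v=\sum_{i\mid v\leq\tilde{H}_i}\lambda_i$ from Section \ref{hyperplane_arr_construction}; your worry about the weights is settled there, since $\Phi^t_v$ acts by the scalar $e^{2\pi i t}$ on $v^\perp$, so every normal weight is $1$.
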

In the case $\ell=n+1$, we can take $L$ to be the Clifford torus, and $D$ the toric boundary divisor, and Equation \eqref{generic_hyperplanes_equation} becomes an equality (a result already known by a variety of methods).

If we instead fix $n=2$, for any $\ell\geq 3$, if we take $H_1$, $H_2$ to be coordinate hyperplanes, we prove in Section \ref{lower_bounds} that Equation \eqref{generic_hyperplanes_equation} is an equality.

We can also apply this framework to some more singular divisors, for example, degenerate hyperplane configurations.
\begin{cor}[Degenerate Hyperplanes in $\mathbb{CP}^n$]
\label{deg_hyp}
Let $M=\mathbb{CP}^n$, and suppose $D=H_1+ ...+ H_\ell$ is an arbitrary union of $\ell$ hyperplanes. Let $L$ denote the skeleton of $X=M\setminus D$. Consider the quantity
\begin{equation}
m(D)=\min_{I\subset [\ell]}\frac{\operatorname{codim}_\mathbb{C}\bigcap_{i\in I}H_i}{\#I}.
\end{equation}
Note that $m(D)\leq 1$ with equality if and only if $D$ is normal crossings. Now, $\tilde{\sigma}_{crit}=\frac{n+1-\ell m(D)}{n+1}$, Theorem \ref{proj_skel} tells us that, if $\ell m(D)\geq n+1$, $L$ is a barrier and
\begin{equation}
    W_G(M\setminus L)\leq \frac{n}{\ell}+1-\frac{1}{\ell}\left\lceil \ell m(D) \right\rceil
\end{equation}
\end{cor}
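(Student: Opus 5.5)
Corollary \ref{deg_hyp} follows from Theorem \ref{proj_skel} once we compute the three quantities entering its bound: $\kappa_{\min}$, $\tilde{\sigma}_{crit}$, and the lifts $\pmb{\kappa},\pmb{\lambda}$ for the model of Section \ref{hyperplane_arr_construction}. With $\omega(\mathbb{CP}^1)=1$, the class $[\omega]$ is Poincaré dual to a hyperplane. The natural lift is $\pmb{\kappa}=\frac{1}{\ell}\sum_i PD^{rel}[H_i]$. Likewise, $2c_1=2(n+1)[H]$ lifts to $\pmb{\lambda}=\frac{2(n+1)}{\ell}\sum_i PD^{rel}[H_i]$, which lies in $C_+(V)$ because every coefficient is positive. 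Now $\pi_2(M,X)$ contains the small discs meeting a single $H_i$ once. Their $\pmb{\kappa}$-values are $1/\ell$, and every class has $\pmb{\kappa}$-value in $\frac{1}{\ell}\mathbb{Z}$, so $\kappa_{\min}=1/\ell$.

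The key step is computing $\lambda_v$ and $w_v$ for each stratum. A stratum $v$ is the open part of a flat $F=\bigcap_{i\in I_v}H_i$, where we take $I_v$ maximal, i.e. all hyperplanes containing $F$. In the model of Section \ref{hyperplane_arr_construction}, $r_v$ should be the Hamiltonian of the circle action rotating the normal directions to $F$ with weight one. The isotropy representation at a point of $F$ is then scalar rotation on the normal space, so $w_v=\operatorname{codim}_{\mathbb{C}}F$. The disc $\delta_v$ is a radial normal disc to $F$, which meets each $H_i\supset F$ once and no others. Hence $\lambda_v=\frac{2(n+1)}{\ell}\#I_v$, and
\[
\frac{\lambda_v-2w_v}{\lambda_v}=1-\frac{\ell\operatorname{codim}F}{(n+1)\#I_v}.
\]
Maximizing over $v$ gives $1-\frac{\ell}{n+1}\min_v\frac{\operatorname{codim}F_v}{\#I_v}$.

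This minimum is over flats with maximal index sets, but it equals $m(D)$. For an arbitrary $I$, let $I'\supseteq I$ be all hyperplanes containing $\bigcap_I H_i$. The intersection and its codimension are unchanged, while the denominator grows. So the minimum over all $I$ is attained on maximal sets, and $\tilde{\sigma}_{crit}=\frac{n+1-\ell m(D)}{n+1}$. The same observation gives $m(D)\le 1$, since a single hyperplane has ratio $1$. Equality holds exactly when every flat has codimension equal to the number of hyperplanes through it, which is the normal crossings condition.

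Finally, $\ell m(D)\ge n+1$ gives $\tilde{\sigma}_{crit}\le 0$, so Theorem \ref{proj_skel} applies and $L$ is a barrier. Substituting into its bound,
\[
1-\left\lceil 1-\tilde{\sigma}_{crit}(n+1)\right\rceil\kappa_{\min}=1-\frac{1}{\ell}\left\lceil 1-(n+1)+\ell m(D)\right\rceil=1+\frac{n}{\ell}-\frac{1}{\ell}\left\lceil \ell m(D)\right\rceil,
\]
using that $n$ is an integer so it can be pulled out of the ceiling. The main obstacle is justifying $w_v=\operatorname{codim}F$ and the formula for $\delta_v$. These depend on the system of commuting Hamiltonians from Section \ref{hyperplane_arr_construction} actually rotating the normal slice to $F$ with uniform weight one, even for non-orthogonal and degenerate arrangements. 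I would verify this by computing in local linear coordinates in which $F$ is a coordinate subspace.
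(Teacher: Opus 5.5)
Your proposal is correct and is essentially the computation the paper leaves implicit. You take the uniform lifts $\pmb{\kappa}=\frac{1}{\ell}\sum_i PD^{rel}[H_i]$ and $\pmb{\lambda}=\frac{2(n+1)}{\ell}\sum_i PD^{rel}[H_i]$, so that $\kappa_{\min}=1/\ell$. You then read off $w_v=\operatorname{codim}_{\mathbb{C}}v$ and $\lambda_v$ as a sum over the hyperplanes containing $v$ from the model of Section \ref{hyperplane_arr_construction}, and substitute into Theorem \ref{proj_skel}.

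The obstacle you flag is already settled by that construction. There $r_v$ is by definition the rotation $(id,e^{2\pi i t}id)$ on $v\oplus v^\perp$. Its isotropy weights at $[z]$ with $z\in v$ are $0$ on $v\cap z^\perp$ and $1$ on $v^\perp$, regardless of how the hyperplanes meet.
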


For example, identify $\mathbb{CP}^n$ with $\{\sum_{i=0}^{n+1}z_i=0\}\subset \mathbb{CP}^{n+1}_z$, let $H_{i,j}=\{z_i=z_j\}\subset\mathbb{CP}^n$, and let $D=\sum_{i<j}H_{i,j}$ (the $A_{n+2}$ hyperplane arrangement). We have $\ell=\frac{(n+1)(n+2)}{2}$, and $m(D)=\frac{2}{n+1}$ (attained at the point $[1:...:1:-(n+1)]$, contained in $\frac{n(n+1)}{2}$ hyperplanes), so by Corollary \ref{deg_hyp}, the skeleton $L$ of $\mathbb{CP}^n\setminus D$ is a barrier.

\begin{cor}[Line configurations in $\mathbb{CP}^2$]
\label{deg_lines}
Let $M=\mathbb{CP}^2$, suppose $H_1,...,H_\ell$ are distinct projective lines, such that any point is contained in at most $k$ lines, and let $D=H_1+ ...+ H_\ell$. Let $L$ denote the skeleton of $X=M\setminus D$. Now, $\tilde{\sigma}_{crit}=\frac{3k-2\ell}{3k}$. Theorem \ref{proj_skel} tells us that, if $2\ell\geq 3k$, $L$ is a barrier (see Figure \ref{line_arrs}) and
\begin{equation}
    W_G(M\setminus L)\leq \frac{2}{\ell}+1-\frac{1}{\ell}\left\lceil \frac{2\ell}{k}\right\rceil.
\end{equation}
\end{cor}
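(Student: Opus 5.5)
Corollary \ref{deg_lines} is a specialisation of Corollary \ref{deg_hyp}, or equivalently of Theorem \ref{proj_skel}, to $n=2$. The plan is to compute the two inputs that theorem needs, $\tilde{\sigma}_{crit}$ and $\kappa_{\min}$, for a line arrangement in $\mathbb{CP}^2$ in which every point lies on at most $k$ lines.

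We use the hyperplane-arrangement model of Section \ref{hyperplane_arr_construction}. Each line has weight $1/\ell$ in $D$, so that $\pmb{\kappa}$ is Poincaré dual to $\frac{1}{\ell}\sum H_i$. Since $2c_1=6[\omega]$, we take $\pmb{\lambda}=\frac{6}{\ell}\sum PD^{rel}[H_i]$.

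\emph{Step 1: strata.} The strata of $V$ are of two kinds: the open parts of the lines $H_i$, and the intersection points $p$ with multiplicity $m_p\le k$.
\begin{itemize}
\item For a line stratum, $w_v=1$ and $\lambda_v=6/\ell$. Hence $\frac{\lambda_v-2w_v}{\lambda_v}=1-\frac{\ell}{3}$.
\item For a point $p$, the radial Hamiltonian is the diagonal circle action in a chart centred at $p$, so $w_p=2$. The disc $\delta_p$ meets each of the $m_p$ lines through $p$ once, so $\lambda_p=6m_p/\ell$. Hence $\frac{\lambda_p-2w_p}{\lambda_p}=1-\frac{2\ell}{3m_p}$.
\end{itemize}
This expression is maximised when $m_p=k$. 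We also have $1-\frac{2\ell}{3k}\ge 1-\frac{\ell}{3}$ because $k\ge 2$; the case $k=1$ is trivial. Therefore
\[
\tilde{\sigma}_{crit}=\frac{3k-2\ell}{3k},
\]
which is consistent with $m(D)=2/k$ in Corollary \ref{deg_hyp}: a set of lines whose intersection has codimension $2$ has at most $k$ elements. So $\tilde\sigma_{crit}\le 0$ if and only if $2\ell\ge 3k$.

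\emph{Step 2: $\kappa_{\min}$.} The classes $u\in\pi_2(M,X)$ pair with $\pmb{\kappa}$ to give $\frac1\ell\sum_i a_i$, where $a_i$ is the intersection number of $u$ with $H_i$. A small meridian disc around a single line realises $a=e_i$, so $\kappa_{\min}=1/\ell$. I expect this to need only the remark that every value is a multiple of $1/\ell$ and that $1/\ell$ is attained.

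\emph{Step 3: substitute.} Put $N_M=3$ and $A=[\mathbb{CP}^1]$, so $\omega(A)=1$. Then
\[
\lceil 1-\tilde\sigma_{crit}\cdot 3\rceil=\Big\lceil 1-\tfrac{3k-2\ell}{k}\Big\rceil=\Big\lceil \tfrac{2\ell}{k}\Big\rceil-2 .
\]
Theorem \ref{proj_skel} gives
\[
W_G\le 1-\tfrac1\ell\big(\lceil 2\ell/k\rceil-2\big)=\tfrac{2}{\ell}+1-\tfrac1\ell\lceil 2\ell/k\rceil .
\]
The right-hand side is strictly less than $1$ because $2\ell/k>2$ when $2\ell\ge 3k$. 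Since $W_G(\mathbb{CP}^2)=1$, $L$ is a barrier.

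The main obstacle is Step 1. We must check that in the non-orthogonal hyperplane model the point strata have isotropy weight exactly $2$ and action $\lambda_p=6m_p/\ell$; that is, that $\delta_p$ is the radial disc crossing each line through $p$ exactly once. I would verify this directly from the construction in Section \ref{hyperplane_arr_construction}, where the radial Hamiltonian at $p$ comes from the standard $S^1$-action on $\mathbb{C}^2$ centred at $p$.
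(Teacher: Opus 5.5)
Your proposal is correct and follows the paper's own route. You read off $w_v=\operatorname{codim}_{\mathbb{C}}v$ and $\lambda_v=\sum_{i\,|\,v\le\tilde{H}_i}\lambda_i$ from the hyperplane model of Section \ref{hyperplane_arr_construction}, which already settles the point-stratum check you flag ($w_p=2$, $\lambda_p=6m_p/\ell$); you then get $\tilde{\sigma}_{crit}=\frac{3k-2\ell}{3k}$ and $\kappa_{\min}=1/\ell$ and substitute into Theorem \ref{proj_skel}. One small remark: if $k$ is only an upper bound on the multiplicities rather than the attained maximum $k_0$, then actually $\tilde{\sigma}_{crit}=\frac{3k_0-2\ell}{3k_0}$, but the stated bound still follows because $2\ell\ge 3k\ge 3k_0$ and $\lceil 2\ell/k_0\rceil\ge\lceil 2\ell/k\rceil$.
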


\begin{figure}
\caption{Line arrangements: \ref{line_arrs:2} and \ref{line_arrs:4} satisfy the conditions of Corollary \ref{deg_lines}; \ref{line_arrs:1} and \ref{line_arrs:3} do not.}
\begin{subfigure}{0.24\textwidth}
\begin{tikzpicture}[scale=0.9]
    \draw (0,0)--(2,3);
    \draw (1,3)--(3,0);
    \draw (0,0.5)--(3,0.5);
    \draw (1.5,0)--(1.5,3);
\end{tikzpicture}
\caption{$(\ell,k)=(4,3)$}
\label{line_arrs:1}
\end{subfigure}
\begin{subfigure}{0.24\textwidth}
\begin{tikzpicture}[scale=0.9]
    \draw (0,0)--(2,3);
    \draw (1,3)--(3,0);
    \draw (0,0.5)--(3,0.5);
    \draw (1.5,0)--(1.5,3);
    \draw (0,0.25)--(3,2.5);
\end{tikzpicture}
\caption{$(\ell,k)=(5,3)$}
\label{line_arrs:2}
\end{subfigure}
\begin{subfigure}{0.24\textwidth}
\begin{tikzpicture}[scale=0.9]
    \draw (0,0)--(2,3);
    \draw (1,3)--(3,0);
    \draw (0,0.5)--(3,0.5);
    \draw (1,0)--(5/3,3);
    \draw (2,0)--(4/3,3);
\end{tikzpicture}
\caption{$(\ell,k)=(5,4)$}
\label{line_arrs:3}
\end{subfigure}
\begin{subfigure}{0.24\textwidth}
\begin{tikzpicture}[scale=0.9]
    \draw (0,0)--(2,3);
    \draw (1,3)--(3,0);
    \draw (0,0.5)--(3,0.5);
    \draw (1,0)--(5/3,3);
    \draw (2,0)--(4/3,3);
    \draw (0,0.25)--(3,2.5);
\end{tikzpicture}
\caption{$(\ell,k)=(6,4)$}
\label{line_arrs:4}
\end{subfigure}
\label{line_arrs}
\end{figure}
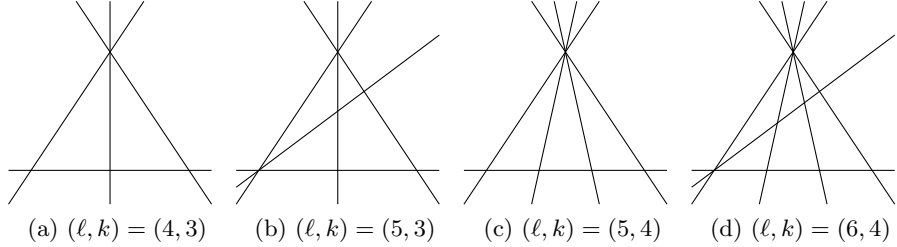

By contrast, the following example fails to give a tight bound on the Gromov width in any case.
\begin{cor}[Smooth Divisor in $\mathbb{CP}^n$]
Let  $M=\mathbb{CP}^n$, and suppose $D$ is a smooth hypersurface of degree $d\geq n+1$. Let $L$ denote the skeleton of $X=M\setminus D$. Now, $\tilde{\sigma}_{crit}=\frac{n+1-d}{n+1}$, so Theorem \ref{proj_skel} tells us that 
\begin{equation}
W_G(\mathbb{CP}^n\setminus L)\leq \frac{n}{d}.
\end{equation}
\end{cor}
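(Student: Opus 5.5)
This corollary follows from Theorem \ref{proj_skel} (Theorem 2) once its inputs are computed for a smooth hypersurface $D\subset\mathbb{CP}^n$ of degree $d$. Here $V$ has a single stratum $v$ with $\mathring{D}_v=D$, so $V_{\operatorname{top}}=\{v\}$. With $\omega(\mathbb{CP}^1)=1$, Poincar\'e duality gives $[D]=d[\omega]$. Hence
\[
\pmb{\kappa}=\tfrac{1}{d}PD^{rel}[D],\qquad \pmb{\lambda}=\tfrac{2(n+1)}{d}PD^{rel}[D],
\]
using $2c_1=2(n+1)[\omega]$. For the radial Hamiltonian I will use the standard symplectic normal disc bundle model, whose circle action is fibrewise rotation. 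Then $\delta_v$ is a small disc normal to $D$, meeting $D$ once, so $PD^{rel}[D](\delta_v)=1$. The normal rotation has isotropy weight $w_v=1$, since the action on $TD$ is trivial. This gives $\kappa_v=1/d$ and $\lambda_v=2(n+1)/d$, and therefore
\[
\tilde{\sigma}_{crit}=\frac{\lambda_v-2w_v}{\lambda_v}=1-\frac{d}{n+1}=\frac{n+1-d}{n+1}.
\]
This is $\leq 0$ exactly when $d\geq n+1$, so the hypothesis of Theorem 2 holds.

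Next I compute $\kappa_{\min}$. Because $V$ has only one top stratum, $\pmb{\kappa}(u)=\frac{1}{d}(u\cdot D)$ for $u\in\pi_2(M,X)$. The disc $\delta_v$ has $u\cdot D=1$, and intersection numbers are integers, so $\kappa_{\min}=1/d$. The main subtle point is checking that no class has a smaller positive value. This holds because $\pmb{\kappa}$ takes values in $\frac1d\mathbb{Z}$ by integrality of $PD^{rel}[D]$.

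Substituting into Theorem 2 gives
\[
\lceil 1-\tilde{\sigma}_{crit}(n+1)\rceil=\lceil 1-(n+1-d)\rceil=d-n,
\]
since this quantity is already an integer. The bound is therefore
\[
W_G(\mathbb{CP}^n\setminus L)\leq 1-\frac{d-n}{d}=\frac{n}{d}.
\]
Theorem 2 also gives that $L$ is a barrier.

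The only step needing genuine care is the application of Theorem 2 itself. This requires the system of commuting Hamiltonians to be the normal-bundle model, and the standard Liouville primitive on $X$ to be adapted to it and to represent $\pmb{\kappa}$. For a smooth divisor this is the classical setting of Biran (and \cite{TMZ}, \cite{Ga}), so I would simply cite it.
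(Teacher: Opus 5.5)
Your proposal is correct and follows the paper's own argument: compute $w_v=1$, $\lambda_v=\frac{2(n+1)}{d}$ and $\kappa_{\min}=\frac1d$ for the single stratum, obtain $\tilde{\sigma}_{crit}=\frac{n+1-d}{n+1}\leq 0$, and substitute into Theorem \ref{proj_skel} to get $1-\frac{d-n}{d}=\frac nd$. The only difference is that you spell out the invariant computations, which the paper states without detail.
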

Note that this is a much weaker result than that obtained by Biran in \cite{Biran}, which gives a tight bound, $W_G(\mathbb{CP}^n\setminus L)\leq \frac{1}{d}$.

\section{Hyperplane Arrangements}
\label{hyperplane_arr_construction}
We now give a short construction of a system of commuting Hamiltonians for a stratification of an arbitrary finite union of hyperplanes in $\mathbb{CP}^n$, endowed with the Fubini-Study form. Let $H_1,...,H_\ell$ be distinct hyperplanes in $\mathbb{CP}^{n}$, and let $\tilde{H}_i$ denote the lift of $H_i$ to $\mathbb{C}^{n+1}$. Let $V$ denote the set of non-zero $\mathbb{C}$-linear subspaces of $\mathbb{C}^{n+1}$ which are of the form $\cap_{i\in I}\tilde{H}_i$ for some $I\subset [\ell]$. The relation of containment gives a natural partial order $\leq$ on $V$. For each $v\in V$, let $D_v$ denote the projective subspace of $\mathbb{CP}^n$ obtained as the image of $v$ under the quotient map (so $D_{\tilde{H}_i}=H_i$), and $\mathring{D}_v=D_v\setminus \bigcup_{u<v}D_u$. This makes $V$ a stratified symplectic subvariety.

For each linear subspace $v\in V$, consider the unitary representation $\Phi_v^t$ of $\mathbb{R}/\mathbb{Z}$ on $\mathbb{C}^{n+1}$ which has the form $\Phi^t_v=(id,e^{2\pi it}id)$ with respect to the orthogonal splitting $\mathbb{C}^{n+1}=v\oplus v^\perp$ (i.e. rotation around $v$ with period $1$). This descends to an action $\phi_v^t$ on $\mathbb{CP}^n$ by K\"ahler isometries. This is a Hamiltonian circle action, generated by $r_v:\mathbb{CP}^n\rightarrow \mathbb{R}_{\geq 0}$, where $r_v([z])=\frac{||\pi_v^\perp(z)||^2}{||z||^2}$, where $\pi_v^\perp$ denotes projection onto the orthogonal complement of $v$. This action is free on a punctured neighbourhood of $D_v$, and fixes $D_v$ itself.

Now suppose $u<v$. Consider the orthogonal splitting $\mathbb{C}^{n+1}=u\oplus (u^\perp\cap v)\oplus v^\perp$. The representations $\Phi_u^t$ and $\Phi_v^t$ are both diagonal with respect to this splitting, so the actions $\phi_u^t$ and $\phi_v^t$ commute, and $\phi_u^t$ preserves $D_v$. The functions $r_v$ for $v\in V$ are therefore a system of commuting Hamiltonians.

For $v\in V$, the weight of $r_v$ is $w_v=\dim_{\mathbb{C}}v^\perp=\operatorname{codim}_{\mathbb{C}}v$. Now consider an effective $\mathbb{Q}$-divisor of the form
\begin{equation}
D=\sum_{i=1}^\ell\lambda_i H_i.
\end{equation}
The affine variety $X=\mathbb{CP}^n\setminus D$ has a canonical (up to deformation) exact symplectic structure representing the class Poincar\'e dual to $D$. For $v\in V$, let $\lambda_v=\sum_{i| v\leq \tilde{H}_i}\lambda_i$. For topological reasons, we see that the action of $r_v$ with respect to this class is $\lambda_v$.

Because the circle actions $\phi_v^t$ locally preserve the divisor $D$, the Liouville vector field on $X$ is outward pointing with respect to $r_v$, so $\theta$ is equivalent to an adapted primitive.

\section{Properties of $(Y^R,\alpha)$}
\subsection{Construction of $(Y^R,\alpha)$}
As described in \cite{Ga}, there exists a smoothing family of hypersurfaces $Y^R\subset X$ for $R\in (0,R_0)$, the interiors of which exhaust $X$ as $R\to 0$, with the following properties. There exists a family $\{\mathring{U}_I:I\subset V\}$ of open subsets of $M$ covering $|V|$, and a smooth real function $H$ defined on a neighbourhood of $Y^R$ such that $Y^R=H^{-1}(1)$ and $H$ has the following form:
\begin{equation}
H|_{\mathring{U}_I}=\sum_{v\in I}q_v(r_v),
\label{H_standard_form}
\end{equation}
where for each $v\in V$, $q_v$ is a smooth function satisfying $q_v\geq 0$, $q_v'\leq 0$ with equality only if $q_v=0$, and $q''_v\geq 0$. Furthermore, there is a continuous, increasing function $\epsilon:[0,R_0)\rightarrow \mathbb{R}_{\geq 0}$ such that $\epsilon(0)=0$ and for all $v\in V$, $q_v(x)=0$ for $x\geq\epsilon(R)$. We will use these properties to compute the Reeb flow on $Y^R$.

We can define a family of continuous functions $\rho^R:M\rightarrow \mathbb{R}_{\geq 0}$, smooth on $M\setminus L$, such that $\rho^R|_{Y^R}=1$ and $\iota_Zd\rho=\rho$ (by construction $\rho^R\to \rho^0$ as $R\to 0$). Letting $\alpha$ denote the pullback of $\theta$ to the hypersurface $Y^R$, the region $\{0<\rho^R\leq 1\}$ is symplectomorphic to the symplectisation $\left((0,1]_\rho\times Y^R,d(\rho\alpha)\right)$.

\subsection{Classification of Reeb Orbits}
Let $\gamma$ be a Reeb orbit in $(Y^R,\alpha)$ of period $S$, so $\gamma(t)=\phi^t(x)$ for some $x\in Y$, where $\phi^t$ denotes the time $S$ Reeb flow.
Let 
\begin{equation}
    I(\gamma)=\{v\in V| x\in U_v, q_v'(r_v(x))\neq 0\}.
\end{equation}
This makes sense because $r_v$ is constant along $\gamma$ whenever $\gamma\subset U_v$.

For $I\subset V$, the Hamiltonians $(r_v:v\in I)$ generate a $\mathbb{R}^I\mathbb{Z}^I$-action on $Y\cap \mathring{U}_I$. Let $O(\gamma)$ denote the $\mathbb{R}^I/\mathbb{Z}^I$-orbit containing $\gamma$.
For $y\in \mathring{U}_I\cap Y$, denote the period lattice by $\Lambda_I(y)=\{a\in \mathbb{R}^I| \phi_{r_I}^{a}(y)=y\}$, where $\phi^a_{r_I}:M\rightarrow M$ is the action of $a\in\mathbb{R}^I/\mathbb{Z}^I$. We have that $\mathbb{Z}^I\subset \Lambda_I(y)$. For fixed $I$, $\Lambda_I(y)$ is upper semi-continuous in $y$. Because the radial Hamiltonians $r_v$ cut out distinct submanifolds, we may restrict to sufficiently small neighbourhoods so that the level sets are transverse, and $\Lambda_I(y)$ is therefore genuinely a lattice.

For $v\in V$, define a function $a_v:Y\rightarrow \mathbb{R}_{\geq 0}$ by letting
\begin{equation}
a_v|_{U_v\cap Y}=\frac{q_v'(r_v)}{\theta(X_H)}
\end{equation}
and letting $a_v|_{Y\setminus U_v}=0$. By construction, $a_v$ is smooth and $a_v(\gamma)\neq 0$ if and only if $v\in I(\gamma)$ (we can write $a_v(\gamma)$ because $a_v$ is constant along $\gamma$).

Along $\gamma$, the Reeb vector field is given by
\begin{equation}
\label{Reeb}
R=\sum_{v\in I(\gamma)}a_v(\gamma)X_{r_v},
\end{equation}
so $a(\gamma)\in S^{-1}\Lambda_{I(\gamma)}(x)$. 

\subsection{Preferred Trivialisations}

\begin{defn}
An $r_v$-equivariant Darboux chart is an open subset $U\subset M$, which is a union of Hamiltonian orbits generated by $r_v$, with a symplectic embedding $\varphi:U\rightarrow \mathbb{C}^n$, such that $\varphi(U)$ is star-shaped with centre $0$, and $\phi$ intertwines the Hamiltonian $\mathbb{R}/\mathbb{Z}$-action generated by $r_v$ with a unitary representation $\mathbb{R}/\mathbb{Z}\rightarrow SU(n)$.    
\end{defn}
Note that the existence of $r_v$-equivariant Darboux charts covering some neighbourhood of $\mathring{D}_v$ is a consequence of the ($\mathbb{R}/\mathbb{Z}$-equivariant) Darboux' theorem (see \cite{GS}, Theorem 4.1). By restricting the range of the Hamiltonians $r_v$, we may assume that all orbits of $X_{r_v}$ are covered by equivariant Darboux charts which are arbitrarily small with respect to some fixed metric on $M$.

\begin{defn}[Outer Caps]
\label{outer_caps}
For any $S$-periodic Reeb orbit $\gamma$ in $(Y^R,\alpha)$, an outer cap is any representative of the class in $\delta\in\pi_2(M,\gamma)$ with boundary $[\gamma]$ defined as follows. We first define a rational class
\begin{equation}
    \delta=\sum_{v\in I(\gamma)}Sa_v(\gamma)\delta_v\in \pi_2(M,O(\gamma))\otimes\mathbb{Q},
\end{equation}
then show that $\delta\in\pi_2(M,O(\gamma))$.

By assumption on the diameter of the equivariant Darboux charts, we may assume that for all $v\in I(\gamma)$, we can represent $\delta_v$ by a disc contained in $B$, where $B$ is an embedded ball (we can choose $B$ to be a ball of diameter $<inj(M)$ where $inj(M)$ denotes the radius of injectivity of $M$ with respect to some metric). Because the boundary map $\pi_2(B,O(\gamma))\rightarrow \pi_1(O(\gamma))$ is an isomorphism, and the image of $\delta$ is the integral class $[\gamma]$ by construction, we have that $\delta\in\pi_2(M,O(\gamma))$ as required.

We denote by $u_{out}(\gamma)$ some representative of $\delta$ which bounds $\gamma$.
\end{defn}

Recall that the relative first Chern number $c_1^\tau(u)$ counts the number of zeros of a smooth section of the complex line bundle $\Lambda^nTM$ (with respect to some $\omega$-compatible almost-complex structure on $M$), which is constant along $\partial u$ with respect to the trivialisation $\tau$ (see Definition 5.1 of \cite{Wendl}). These numbers will be used to compare Conley-Zehnder indices with independently of trivialisations. 

\begin{defn}[Outer Trivialisations]
For any $S$-periodic Reeb orbit $\gamma$ in $(Y^R,\alpha)$, we denote by $\tau_{out}$ the equivalence class of trivialisations of $\gamma^*\xi$ or $\gamma^*TM$ such that the induced trivialisations of $\Lambda^n_\mathbb{C}TM$ extend over $u_{out}(\gamma)$, i.e. $c_1^{\tau_{out}}(u_{out}(\gamma))=0$. 
\end{defn}

\begin{lem}[Area of Outer Caps]
\label{outer_caps_action}
For any $S$-periodic Reeb orbit $\gamma$ in $(Y^R,\alpha)$,
\begin{equation}
    0\leq\omega(u_{out}(\gamma))\leq C\epsilon(R)\pmb{\kappa}(u_{out}(\gamma)),
\end{equation}
where $C>0$ is a constant depending only on $\kappa_{\min}$.
\end{lem}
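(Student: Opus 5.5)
Since $u_{out}(\gamma)$ represents $\delta=\sum_{v\in I(\gamma)}Sa_v(\gamma)\delta_v$ and all the $\delta_v$-discs lie in a single small ball $B$, I will compute $\omega(u_{out}(\gamma))$ by choosing a convenient representative inside $B$. Two ingredients are needed: a formula for $\omega(u_{out}(\gamma))$ as the symplectic area swept by the orbits of the commuting Hamiltonians, and a comparison of that area with $\pmb{\kappa}$ using the Reeb flow formula \eqref{Reeb}.

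First, $\pmb{\kappa}(u)=\int_u\omega-\int_{\partial u}\theta$. Because $\partial u_{out}(\gamma)=\gamma$ is a Reeb orbit, $\int_\gamma\theta=\int_\gamma\alpha=S>0$. Hence
\begin{equation}
\omega(u_{out}(\gamma))=\pmb{\kappa}(u_{out}(\gamma))+S .
\end{equation}
This looks backwards, since it makes $\omega$ exceed $\pmb{\kappa}$. So I reinterpret the statement in light of the construction: the orbit lies on $Y^R$, and the relevant actions are $\kappa_v$ (the $\pmb{\kappa}$-value of $\delta_v$) against the $\omega$-area of the disc swept by $r_v$-orbits from $\mathring{D}_v$ out to the level $r_v(x)$.

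That area is exactly $r_v(x)$, because $r_v$ is a Hamiltonian for a period-$1$ circle action vanishing on $\mathring{D}_v$. In an $r_v$-equivariant Darboux chart, the disc of orbits from the fixed point to an orbit at level $r$ has area $r$. Summing over $v\in I(\gamma)$ with the integer weights $Sa_v(\gamma)$ gives
\begin{equation}
\omega(u_{out}(\gamma))=\sum_{v\in I(\gamma)}Sa_v(\gamma)\,r_v(x)\ge 0,
\end{equation}
which proves the lower bound. Since $v\in I(\gamma)$ forces $q_v'(r_v(x))\neq0$, we have $r_v(x)<\epsilon(R)$, so
\begin{equation}
\omega(u_{out}(\gamma))\le \epsilon(R)\sum_{v\in I(\gamma)} Sa_v(\gamma).
\end{equation}

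Next, $\pmb{\kappa}(u_{out}(\gamma))=\sum_{v\in I(\gamma)}Sa_v(\gamma)\kappa_v$. Each $\kappa_v$ is positive: the $\delta_v$ pair positively with the top strata containing $\mathring{D}_v$, and $\pmb{\kappa}\in C_+(V)$. Also $\kappa_v\geq\kappa_{\min}$, since $\delta_v\in\pi_2(M,X)$ and $\pmb{\kappa}(\delta_v)>0$. Therefore
\begin{equation}
\sum_v Sa_v(\gamma)\le \kappa_{\min}^{-1}\,\pmb{\kappa}(u_{out}(\gamma)),
\end{equation}
and the upper bound follows with $C=\kappa_{\min}^{-1}$.

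The main obstacle is the area identity for a sum of several commuting $\delta_v$. I must check that concatenating the discs for the different $v$, with integer multiplicities $Sa_v(\gamma)$, inside $B$ gives a representative whose $\omega$-area is the sum of the individual areas. I would handle this with the moment map of the torus action on $\mathring{U}_{I(\gamma)}$. Take the disc swept by the one-parameter subgroup generated by $\sum_v Sa_v(\gamma)X_{r_v}$, with the levels $r_v$ shrinking linearly to $0$. Its area is $\sum_v Sa_v(\gamma)r_v(x)$ by the standard moment map computation. Homotopy invariance inside $B$ then identifies it with $u_{out}(\gamma)$.

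A second point to verify is that the representative of $\delta_v$ can really be chosen so that its $\omega$-area equals $r_v(x)$ rather than $r_v$ at some other orbit. Here the freedom in the choice of endpoint orbit in Definition \ref{action_def} is resolved by taking the endpoint on $O(\gamma)$ itself.
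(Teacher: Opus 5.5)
Your core argument is the paper's: represent each $\delta_v$ by a disc $u_v$ of $r_v$-orbits ending on $O(\gamma)$, so that $\omega(u_v)=r_v(x)<\epsilon(R)$. Then use $\kappa_v\geq\kappa_{\min}$ and sum with the nonnegative weights $Sa_v(\gamma)$ to get $C=\kappa_{\min}^{-1}$.

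\textbf{The opening identity.} You should not leave $\omega(u_{out}(\gamma))=\pmb{\kappa}(u_{out}(\gamma))+S$ standing and then ``reinterpret'' the statement. Combined with $\omega(u_{out}(\gamma))\geq 0$, it makes the lemma false as soon as $C\epsilon(R)<1$, so it contradicts the computation you go on to make. The problem is the boundary orientation, and your own formulas detect it. Orient $u_v$ so that $\kappa_v>0$ and $\omega(u_v)=r_v(x)\geq 0$, as you and the paper both do. Then the definition of $\pmb{\kappa}$ gives $\int_{\partial u_v}\theta=r_v(x)-\kappa_v$, which is negative once $\epsilon(R)<\kappa_{\min}$. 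Hence $\int_{\partial u_{out}(\gamma)}\theta<0$, whereas $\int_\gamma\theta=S>0$. So $\partial u_{out}(\gamma)$ is $\gamma$ traversed backwards: the cap closes $\gamma$ off from the side of $D$. This is also how it is glued in Lemmas \ref{total_c1} and \ref{area_bound_advanced}. The correct identity is therefore $\omega(u_{out}(\gamma))=\pmb{\kappa}(u_{out}(\gamma))-S$, which is consistent with the bound. You were right that the literal phrase ``the image of $\delta$ is $[\gamma]$'' clashes with the sign conventions. The repair is this orientation, not a change of statement.

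\textbf{The weights.} The $Sa_v(\gamma)$ are in general only rational, not integers. $Sa(\gamma)$ lies in the period lattice $\Lambda_{I(\gamma)}(x)$, which contains $\mathbb{Z}^{I(\gamma)}$ but can be strictly larger; this is why $\delta$ is first defined in $\pi_2(M,O(\gamma))\otimes\mathbb{Q}$. So ``concatenating with integer multiplicities'' is not literally available.

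The paper gets additivity more cheaply than your one-parameter-subgroup disc. Since the $r_v$ Poisson-commute, $O(\gamma)$ is isotropic. Hence $\omega$ is a well-defined $\mathbb{Q}$-linear functional on $\pi_2(M,O(\gamma))\otimes\mathbb{Q}$, and it suffices to evaluate it on each $u_v$. This also avoids asking whether your explicit disc exists at all. Shrinking all levels to $0$ simultaneously aims at $\bigcap_{v\in I(\gamma)}r_v^{-1}(0)=\bigcap_{v\in I(\gamma)}\mathring{D}_v$. That intersection is empty when $|I(\gamma)|\geq 2$, since $r_v^{-1}(0)=\mathring{D}_v$ and distinct strata are disjoint.
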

\begin{proof}
Let $C=\frac{1}{\kappa_{\min}}$.

Because $O(\gamma)$ is isotropic, we obtain a $\mathbb{Q}$-linear map, $\omega:\pi_2(M,O(\gamma))\otimes\mathbb{Q}\rightarrow \mathbb{R}$. Let $u_v$ denote a disc which is a path of $\mathbb{R}/\mathbb{Z}$-orbits generated by $r_v$, starting at a point on $\mathring{D}_v$ and ending on $O(\gamma)$ (so $u_v$ is a representative of $\delta_v$). By linearity it suffices to show that $0\leq \omega(u_v)\leq C\epsilon(R)\kappa_v$.

Because $u_v$ is represented by a union of Hamiltonian orbits generated by $r_v$, which is constant on $O(\gamma)$, $\omega(u_v)=r_v(\gamma)\leq \epsilon(R)$, as required.
\end{proof}

\begin{defn}(Conley-Zehnder Index)
Let $A:[0,S]\rightarrow Sp(2n)$, be a path of symplectic matrices such that $A(0)=id$. We say $A$ is non-degenerate if $det(A(S)-id)\neq 0$. The Conley-Zehnder index is an integer associated any such non-degenerate path, which we will denote $i(A)$. There are several ways of extending this definition to degenerate paths (i.e. those such that $det(A(S)-id)=0$). We will use the version defined by Long in \cite{Long_index}. This version has the property that whenever $B:[0,S]\rightarrow Sp(2n)$ is sufficiently $C^0$-close to $A$ and $\det(B(S)-id)\neq 0$, $i(B)-i(A)\in [0,\dim\ker(A(S)-id)]$. 

Let $\xi$ be a contact distribution on a $2n-1$-manifold $Y$, and let $\phi^t$ denote the time-$t$ Reeb flow. To any $S$-periodic Reeb orbit $\gamma$, equipped with a trivialisation $\tau$ of $\gamma^*\xi$, we obtain a path in $Sp(2n-2)$, given by
\begin{equation}
    A(t)=\tau(t)^{-1}\circ (D\phi^t)_{\gamma(0)}|_\xi\circ \tau(0)
\end{equation}
and we can assign an integer, the Conley-Zehnder index of the Reeb orbit, given by
\begin{equation}
    \mu_{CZ}^\tau(\gamma)=i(A).
\end{equation}
When $\gamma$ is non-degenerate, this coincides with the notation used by Wendl in Definition 3.31 of \cite{Wendl}, which appears in the virtual dimension formula for holomorphic curves asymptotic to $\gamma$.
\end{defn}

\begin{lem}
\label{integral_orbit_index}
    For any $S$-periodic Reeb orbit $\gamma$ in $(Y^R,\alpha)$, if $Sa(\gamma)\in\mathbb{N}^{I(\gamma)}$, then
    \begin{equation}
        \mu_{CZ}^{\tau_{out}}(\gamma)+2S\sum_{v\in I(\gamma)}a_v(\gamma)w_v\in [-2n+2,2n-2].
    \end{equation}
\end{lem}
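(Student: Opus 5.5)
Since $Sa(\gamma)\in\mathbb{N}^{I(\gamma)}$, the time-$S$ Reeb flow along $\gamma$ is, by Equation \eqref{Reeb}, the time-one map of the circle action generated by $\sum_{v\in I(\gamma)}Sa_v(\gamma)r_v$ (together with a shear). I will therefore compute the linearised return map by decomposing $TM$ along $O(\gamma)$, working in an $r_v$-equivariant Darboux chart. The first step is to write the linearised Hamiltonian flow of $H=\sum_{v\in I}q_v(r_v)$ along $\gamma$. Since the $r_v$ commute and $H$ is a function of them, the flow is
\begin{equation}
D\phi^t=\Big(\prod_{v\in I}D\phi_{r_v}^{ta_v}\Big)\circ(\text{shear}),
\end{equation}
where the shear term comes from the Hessian terms $q_v''\,dr_v\otimes dr_v$ and acts as the identity on the symplectic complement of $\operatorname{span}\{X_{r_v},\nabla r_v\}$. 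Restricting to $\xi$ (modulo the Reeb direction and the Liouville direction), the path $A(t)$ is homotopic, with fixed endpoints up to a degenerate part, to the product of a loop of unitary matrices and a shear path.

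The second step is to compute the Maslov index of the unitary loop with respect to $\tau_{out}$. The circle action $\Phi=\prod_v\phi_{r_v}^{Sa_v t}$, $t\in[0,1]$, is a unitary loop in the equivariant chart. Trivialising over the outer cap $u_{out}(\gamma)$ amounts to extending the trivialisation over the disc swept by the orbits down to the fixed locus. There, each $r_v$ rotates with total weight $w_v$, so in the trivialisation $\tau_{out}$ the linearised loop has Maslov index $2\sum_v Sa_v w_v$. This follows from the standard relation that a loop $t\mapsto e^{2\pi i k t}$ contributes $2k$, combined with the observation that $c_1^{\tau_{out}}(u_{out})=0$ makes this trivialisation the one that is constant near the fixed point. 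Because the return map of a full loop is the identity on the unitary part, by the loop property of the Conley–Zehnder index (in Long's extension) we get
\begin{equation}
\mu_{CZ}^{\tau_{out}}(\gamma)=-2\sum_v Sa_v w_v+i(\text{shear path}),
\end{equation}
with the sign fixed by the convention relating the rotation direction to $q_v'\le 0$.

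The third step bounds the index of the residual path. This path is a symplectic shear, i.e. a path of the form $\begin{pmatrix}1&tB\\0&1\end{pmatrix}$ with $B$ semidefinite (here $q_v''\ge0$), together with identity blocks, in $Sp(2n-2)$. Every such path starting at $\operatorname{id}$ is totally degenerate, with $\dim\ker(A(S)-\operatorname{id})\le 2n-2$. For such paths Long's index lies in $[-(n-1),n-1]$ by the standard computation for degenerate shears: the index is given by a signature term, bounded by half the nullity. Allowing also for the correction between Long's normalisation and the perturbed index, which lies in $[0,\dim\ker]$, gives a window of width at most $2n-2$ on each side. Combining this with the second step yields the interval $[-2n+2,2n-2]$.

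The main obstacle is the second step: verifying carefully that $\tau_{out}$ agrees with the trivialisation extending over the cap, so that the loop contributes exactly $2\sum Sa_vw_v$. This is subtle when $I(\gamma)$ contains several nested strata with different fixed loci. I would first handle a single $v$, using the chart and linearity of $\delta$, and then use commutativity to concatenate the loops, since Maslov indices add. A secondary worry is the sign convention and whether the shear block can contribute beyond $\pm(n-1)$; I expect the crude bound by the full nullity $2n-2$ to be enough for the stated interval.
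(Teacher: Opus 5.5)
Your proposal is correct in outline and follows the paper's route: factor the linearised return map as the circle action of $G=S\sum_{v}a_v(\gamma)r_v$ composed with the shear $\mathrm{id}+t\dot A$ (where $\dot A^2=0$), bound the shear's Long index, and obtain the rotation term from the weights $w_v$ on the cap decomposed as $\sum_v Sa_v(\gamma)u_v$; for the shear bound the paper uses homotopy invariance in $t$ to shrink to a path near the constant identity, so your semidefiniteness claim is unproved but also unnecessary. Where your Step 2 uses the loop property in $\tau_{out}$, the paper computes in the equivariant trivialisation $\tau_{eq}$ and applies $\mu^{\tau_{out}}_{CZ}=\mu^{\tau_{eq}}_{CZ}-2c_1^{\tau_{eq}}(u_{out}(\gamma))$ with $c_1^{\tau_{eq}}(u_v)=w_v$; this is what fixes the minus sign, which your Step 2 leaves to convention after first giving the loop a positive contribution.
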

\begin{proof}
By hypothesis, $\gamma$ is a $1$-periodic orbit of the Hamiltonian $\mathbb{R}/\mathbb{Z}$-action generated by
\begin{equation}
G=S\sum_{v\in I(\gamma)}a_v(\gamma)r_v.
\end{equation}
Let $x=\gamma(0)$. Let $\tau_{eq}$ denote any trivialisation of $\gamma^*\xi$ arising from $D\phi^t_G$, i.e. $\tau_{eq}(t)=(D\phi_G^{S^{-1}t})_x|_\xi^{-1}:\xi_{\gamma(t)}\rightarrow \xi_x$, where we identify $\xi_x\cong\mathbb{C}^{n-1}$ in a fixed way.

First, we calculate $\mu_{CZ}^{\tau_{eq}}(\gamma)$. Let $A(t)=D(\phi^{St}\circ \phi_G^{-t})_x|_\xi$, so $\mu_{CZ}^{\tau_{eq}}(\gamma)=i(A)$, where we view $A$ as a map $[0,1]\rightarrow Sp(\xi_x)\cong Sp(2n-2)$.

The Lie algebra element $\dot{A}=\frac{dA}{dt}(0)$ is given by
\begin{equation}
\label{lie_element}
    \dot{A}=S\sum_{v\in I(\gamma)}\left(da_v\otimes X_{r_v}\right)_x|_\xi\in Sp(\xi_x)
\end{equation}
which satisfies $\dot{A}^2=0$ because for each $u,v\in I(\gamma)$, $\{a_u,r_v\}=0$, so $A(t)=id+t\dot{A}$. It follows from homotopy invariance that $i(A|_{[0,t]})$ is independent of $t\in (0,1]$. Letting $t\to 0$, we see that $i(A)\in [-2n+2,2n-2]$ because $A|_{[0,t]}$ becomes arbitrarily close to the constant path $id$.

Because $\mu_{CZ}^{\tau_{out}}(\gamma)=\mu_{CZ}^{\tau_{eq}}(\gamma)-2c_1^{\tau_{eq}}(u_{out}(\gamma))$, it suffices to show that $c_1^{\tau_{eq}}(u_{out}(\gamma))=S\sum_{v\in I(\gamma)}a_v(\gamma)w_v$.  First, we can extend $\tau_{eq}$ to a $\mathbb{R}^{I(\gamma)}/\mathbb{Z}^{I(\gamma)}$-invariant trivialisation of $\xi|_{O(\gamma)}$. The cap $u_{out}(\gamma)$ is homotopic in $O(\gamma)$ to a concatenation of discs $Sa_v(\gamma)u_v$ for $v\in I(\gamma)$ where $u_v$ is as in Definition \ref{outer_caps}, so
\begin{equation}
    c_1^{\tau_{eq}}(u_{out})=S\sum_{v\in I(\gamma)}a_v(\gamma)c_1^{\tau_{eq}}(u_v).
\end{equation}
The trivialisation $\tau_{eq}$ is equivalent as a trivialisation of $\gamma_v^*\xi$, where $\gamma_v=\partial u_v$, to that induced by an $r_v$-equivariant Darboux chart, so $c_1^{\tau_{eq}}(u_v)=w_v$, as required.
\end{proof}

\begin{lem}
\label{index_estimate}
    For any $S$-periodic Reeb orbit $\gamma$ in $(Y^R,\alpha)$, 
    \begin{equation}
    \mu_{CZ}^{\tau_{out}}(\gamma)+\dim\ker ((D\phi^1)_{\gamma(0)}|_\xi-id)\leq 2S\sum_{v\in I(\gamma)}a_v(\gamma)w_v+n-1.
    \end{equation}
\end{lem}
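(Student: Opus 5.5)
We reduce to the situation of Lemma \ref{integral_orbit_index} by perturbing the linearised flow to one generated by an integral combination of the radial Hamiltonians. Write $I=I(\gamma)$ and $x=\gamma(0)$. The Reeb vector field along $\gamma$ is $R=\sum_{v\in I}a_v(\gamma)X_{r_v}$, so the vector $Sa(\gamma)\in\mathbb{R}^I$ lies in the period lattice $\Lambda_I(x)$, but it need not lie in $\mathbb{Z}^I$. The plan has three steps.

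\emph{Step 1: replace the time-$S$ Reeb flow by a torus element.} We split the time-$S$ Reeb flow as the torus element $\phi^{Sa(\gamma)}_{r_I}$ composed with a unipotent shear. As in the proof of Lemma \ref{integral_orbit_index}, the $a_v$ Poisson-commute with the $r_u$. So the linearised return map restricted to $\xi_x$ factors as
\[
(D\phi^S)_x|_\xi=(D\phi^{Sa}_{r_I})_x|_\xi\circ(\mathrm{id}+\dot A),\qquad \dot A^2=0,
\]
with $\dot A$ the Lie algebra element given by \eqref{lie_element}.

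\emph{Step 2: compute the torus contribution.} The linearised torus element $B=(D\phi^{Sa}_{r_I})_x|_\xi$ is a unitary map, diagonalised using $r_v$-equivariant Darboux charts. Its rotation path has total rotation weighted by the isotropy weights $w_v$, namely $\sum_v Sa_v w_v$. The point is that the outer cap in Definition \ref{outer_caps} is built from $\sum Sa_v\delta_v$, so the relative Chern number again equals $\sum_v Sa_v w_v$, exactly as in Lemma \ref{integral_orbit_index}. This holds because $c_1^{\tau}$ of a disc of orbits is linear in the rotation amount.

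\emph{Step 3: bound the index of the rotation-times-shear path.} With respect to the equivariant trivialisation $\tau_{eq}$, the path is $t\mapsto \exp(t\log B)\circ(\mathrm{id}+t\dot A)$, where $B$ is a product of rotations by angles $2\pi Sa_v\cdot(\text{weights})$. Using the Long index, its Conley--Zehnder index is bounded by that of a pure rotation path plus the contribution of the degenerate directions. For a unitary rotation path, each complex eigendirection rotating by total angle $2\pi\theta$ contributes at most $2\theta+1$ to $i$, with the $+1$ coming from the degenerate or half-integer case. Summing over the $n-1$ complex directions of $\xi$ gives
\[
i+\dim\ker(A(S)-\mathrm{id})\le 2\sum_v Sa_v w_v+(n-1).
\]
The kernel correction needs its own check. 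Directions on which $B$ rotates by a non-integer amount contribute no kernel. Directions with an integer rotation $k$ contribute index in $[2k-1,2k+1]$, with kernel dimension at most $2$. Long's convention, where perturbations change the index by an amount in $[0,\dim\ker]$, forces the lower end, so that the index plus the kernel dimension is at most $2k+1$ per complex direction. The shear $\dot A$ only acts within the integer-rotation eigenspaces and can only decrease the kernel. To control it we argue as in Lemma \ref{integral_orbit_index}, via homotopy invariance and a $t\to0$ limit.

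\emph{Conclusion.} Change from $\tau_{eq}$ to $\tau_{out}$ using $\mu^{\tau_{out}}=\mu^{\tau_{eq}}-2c_1^{\tau_{eq}}(u_{out})$, together with the computation of $c_1^{\tau_{eq}}(u_{out})$ from Step 2.

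The main obstacle is Step 3. We must show that the index plus the kernel dimension is at most $2\theta+1$ per complex direction, uniformly over all rotation amounts, including the resonant ones where the shear interacts with the kernel. First we would reduce to a normal form: commuting torus action times a nilpotent shear, split into $B$-eigenspaces. Then we would apply the direct-sum property of the Long index together with its explicit values on $2\times2$ rotation and shear blocks.
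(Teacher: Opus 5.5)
\textbf{The gap is in Step 2 and in how it feeds your Conclusion, not in Step 3.}

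The total weight $w_v$ is the weight of the isotropy representation on all of $T_pM$ at a point $p\in\mathring{D}_v$. At a point $x\in Y^R$, part of that rotation is carried by the complex line $\langle Z,R\rangle$, not by $\xi_x$. For example, near a smooth point of a single component $D_v$, in an $r_v$-equivariant Darboux chart the whole weight sits in the normal line. At $x$ that line is $\langle Z,R\rangle$, so the linearised flow on $\xi$ does not rotate at all.

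More decisively, you put the rotation $\sum_v Sa_vw_v$ into the path with respect to $\tau_{eq}$ \emph{and} use $c_1^{\tau_{eq}}(u_{out})=\sum_v Sa_vw_v$. These cannot both hold for the same trivialisation, because $\mu_{CZ}^{\tau}(\gamma)-2c_1^{\tau}(u_{out}(\gamma))$ is independent of $\tau$. In the paper's $\tau_{eq}$ the rotation is absorbed into the frame, so the path is the pure shear $\mathrm{id}+t\dot A$. That is precisely why the cap has Chern number $\sum_v Sa_vw_v$. Carried out literally, your Conclusion gives $\mu_{CZ}^{\tau_{out}}(\gamma)+\dim\ker\le n-1$, with the rotation cancelling. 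That inequality happens to be true, and it implies the printed one since $a_v,w_v\ge 0$, but you reach it only through a double count.

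Finally, when $Sa(\gamma)\notin\mathbb{Z}^{I(\gamma)}$ three things go wrong:
\begin{itemize}
\item The frame $(D\phi_G^t)^{-1}$ does not close up, since its monodromy is $B\neq\mathrm{id}$.
\item A torus-invariant trivialisation of $\xi|_{O(\gamma)}$ need not exist.
\item ``$c_1$ of $Sa_v$ copies of $u_v$'' has no meaning for fractional $Sa_v$.
\end{itemize}
Choosing $\log B$ amounts to choosing a trivialisation along $\gamma$, and you give no way to compare it with $\tau_{out}$.

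\textbf{The missing idea is the paper's.} The only quantity you need is the mean index with respect to $\tau_{out}$, and you compute it from iterates.
\begin{itemize}
\item Choose $m$ with $mSa(\gamma)\in\mathbb{Z}^{I(\gamma)}$. This is possible because $\Lambda_{I(\gamma)}(x)\supseteq\mathbb{Z}^{I(\gamma)}$ is a lattice.
\item $\tau_{out}$ is compatible with iteration, so Lemma \ref{integral_orbit_index} applied to $\gamma^m$ gives $\mu_{CZ}^{\tau_{out}}(\gamma^m)+2mS\sum_v a_v(\gamma)w_v\in[-2n+2,2n-2]$.
\item Dividing by $m$ and letting $m\to\infty$ gives $\hat\mu_{CZ}^{\tau_{out}}(\gamma)=-2S\sum_v a_v(\gamma)w_v$.
\item Long's iteration inequality $i+\nu\le\hat i+(n-1)$, valid for every path in $Sp(2n-2)$, then finishes the proof.
\end{itemize}
Note the sign. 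The display \eqref{mean_index} drops it, but the minus sign is what property 2 of Lemma \ref{orbit_properties_estimates} actually uses. The printed $+$ version is a weaker consequence.

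Your Step 3 eigenspace analysis is in effect a hand proof of Long's inequality for paths of the form rotation times commuting shear. It is reasonable in outline: the shear does vanish off the $1$-eigenspace of $B$, since it commutes with $B$ and its image consists of torus-invariant vectors. But it becomes superfluous once the rotation is normalised via iterates, and without that normalisation it is applied to the wrong rotation.
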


\begin{proof}
    First we will compute the mean index of $\gamma$, defined as follows:
    \begin{equation}
        \hat{\mu}_{CZ}^{\tau_{out}}(\gamma)=\lim_{m\to \infty}\frac{\mu_{CZ}^{\tau_{out}}(\gamma^m)}{m},
    \end{equation}
    where $\gamma^m$ is the $m$-th iterate of $\gamma$ (the corresponding $mS$-periodic Reeb orbit). Because a trivialisation in the class $\tau_{out}$ for $\gamma^*\xi$ induces a trivialisation for $(\gamma^m)^*\xi$ also in the class $\tau_{out}$, this limit is a well-defined real number, the mean index of the corresponding path in $Sp(2n-2)$ as defined in \cite{Long_iteration}.
    
    We can compute it by considering only those $m\in\mathbb{N}$ such that $mSa(\gamma)\in\mathbb{N}^{I(\gamma)}$. By Lemma \ref{integral_orbit_index}, 
    \begin{equation}
    \label{mean_index}
        \hat{\mu}_{CZ}^{\tau_{out}}(\gamma)=2S\sum_{v\in I(\gamma)}a_v(\gamma)w_v.
    \end{equation}

    By Theorem 4 of \cite{Long_iteration},
    \begin{equation}
    \mu_{CZ}^{\tau_{out}}(\gamma)+\dim\ker ((D\phi^1)_x|_\xi-id)\leq \hat{\mu}_{CZ}^{\tau_{out}}(\gamma)+n-1
    \end{equation}
    (there is also a lower bound that we will not use). Combining this upper bound with Equation \eqref{mean_index} gives the required result.
\end{proof}

\subsection{Perturbing $\alpha$}

\begin{defn}[Nearby Reeb orbits]
    Let $f:Y^R\rightarrow \mathbb{R}$. Let $\gamma,\tilde{\gamma}$ be $S,\tilde{S}$-periodic Reeb orbits in $(Y^R,\alpha)$, $(Y,e^f\alpha)$ respectively.

    We say $\tilde{\gamma}$ is an $\epsilon$-perturbation of ${\gamma}$ if there exist trivialisations $\tau$, $\tilde{\tau}$ for $\gamma^*\xi$, $\tilde{\gamma}^*\xi$ respectively, and a cylinder $\Sigma$ in $Y$ such that 
    \begin{itemize}
    \item $|S-\tilde{S}|<\epsilon$,
    \item $\partial \Sigma=\gamma-\tilde{\gamma}$,
    \item $\mu_{CZ}^\tau(\gamma)-\mu_{CZ}^{\tilde{\tau}}(\tilde{\gamma})\in [0,\dim\ker((D\phi^t)_{\gamma(0)}|_\xi-id)]$,
    \item  $c_1^{\tau,\tilde{\tau}}(\Sigma)=0$.
    \end{itemize}
    Although we are not requiring $\gamma,\tilde{\gamma}$ to be close as maps, this will be the case when we show the existence of such pairs.
\end{defn}
\begin{lem}
\label{single_orbit_perturbation}
Fix a metric on $Y^R$, and $\epsilon>0$. Let $\gamma$ be an $S$-periodic Reeb orbit in $(Y^R,\alpha)$. There exists $\delta>0$ such that whenever $||f||_{C^\infty}<\delta$, $|\tilde{S}-S|<\delta$, $\tilde{\gamma}$ is a non-degenerate $\tilde{S}$-periodic Reeb orbit in $(Y,e^f\alpha)$, and $d(\gamma(0),\tilde{\gamma}(0))<\delta$, then $\tilde{\gamma}$ is an $\epsilon$-perturbation of $\gamma$.
\end{lem}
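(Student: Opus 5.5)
The plan is to build $\tau$, $\tilde\tau$ and $\Sigma$ directly from the closeness of $\tilde\gamma$ to $\gamma$, and to get the index inequality from the semicontinuity property of Long's index. Everything happens in a small neighbourhood of $\gamma$.

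\textbf{Closeness of the orbits and the cylinder.} The Reeb vector field $R_f$ of $e^f\alpha$ depends continuously on $f$ in $C^1$, so $\|R_f-R\|_{C^0}\to 0$ as $\|f\|_{C^\infty}\to 0$. Fix a tubular neighbourhood $N$ of $\gamma(\mathbb{R})\subset Y^R$ that deformation retracts onto $\gamma$. If $\gamma$ is multiply covered, work instead with a neighbourhood of the parametrised loop $t\mapsto\gamma(t)$, $t\in[0,S]$. By Gronwall, if $d(\gamma(0),\tilde\gamma(0))<\delta$, $\|f\|<\delta$ and $|\tilde S-S|<\delta$, then $\tilde\gamma(t)$ stays uniformly within $\epsilon'$ of $\gamma(t)$ for $t\in[0,\max(S,\tilde S)]$, with $\epsilon'\to 0$ as $\delta\to 0$. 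After reparametrising both loops by $[0,1]$, the geodesic homotopy $\Sigma(s,t)=\exp_{\gamma(t)}(s\,\exp^{-1}_{\gamma(t)}\tilde\gamma(t))$ is a cylinder with $\partial\Sigma=\gamma-\tilde\gamma$. Shrinking $\delta$ so that also $\delta<\epsilon$ gives $|S-\tilde S|<\epsilon$.

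\textbf{Trivialisations.} Choose any trivialisation $\tau$ of $\gamma^*\xi$. The distributions $\xi_f=\ker(e^f\alpha)=\xi$ coincide, so $\xi$ is a fixed bundle over $Y^R$. Extend $\tau$ over $\Sigma^*\xi$, which is possible since $\Sigma$ is a cylinder, and let $\tilde\tau$ be its restriction to the other boundary. By construction the relative Chern number is $c_1^{\tau,\tilde\tau}(\Sigma)=0$.

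\textbf{Index inequality.} Transport everything into the fixed frame using $\Sigma$. The linearised path $\tilde A(t)=\tilde\tau(t)^{-1}(D\tilde\phi^t)_{\tilde\gamma(0)}|_\xi\tilde\tau(0)$ on $[0,\tilde S]$ is a smooth function of $(f,\tilde\gamma(0),\tilde S)$. After rescaling time to $[0,1]$, it tends in $C^0$ to the path $A$ for $\gamma$ as $\delta\to 0$. Since $\tilde\gamma$ is non-degenerate, $\det(\tilde A(1)-\mathrm{id})\neq 0$. The stated property of Long's index then gives
\[
i(\tilde A)-i(A)\in[0,\dim\ker(A(1)-\mathrm{id})]
\]
once $\tilde A$ is sufficiently $C^0$-close to $A$. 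This matches the required membership up to sign convention. If the sign is reversed relative to the definition, one adjusts the orientation or convention of Long's index (equivalently, the perturbation lowers the index by at most the nullity), which is exactly the defining property quoted for Long's extension.

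\textbf{Main obstacle.} The delicate point is making the $C^0$-closeness uniform. This requires choosing $\delta$ so that the time-rescaled linearised flows along $\tilde\gamma$ are close to those along $\gamma$ when expressed in the parallel-transported frames. That follows from continuous dependence of ODE solutions and their derivatives on initial data and on the vector field, using compactness of $[0,S+1]$.
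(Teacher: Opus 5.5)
This is essentially the paper's proof. Both arguments use a short cylinder near $\gamma$ across which a single trivialisation of $\xi$ is carried, so that $c_1^{\tau,\tilde\tau}(\Sigma)=0$. Both get $C^0$-closeness of the time-rescaled linearised paths from continuous dependence of the Reeb flow on $f$, and both finish with Long's semicontinuity property. The paper restricts one trivialisation of $\xi$ over a tubular neighbourhood of $\gamma$ to both orbits, which is equivalent to your transport of $\tau$ across $\Sigma$.

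The one thing to fix is your handling of the sign. What you derive is $\mu_{CZ}^{\tilde\tau}(\tilde\gamma)-\mu_{CZ}^{\tau}(\gamma)\in[0,\dim\ker(A(S)-\mathrm{id})]$. This is exactly what the paper's proof derives: it ends with $i(\tilde A)\in[i(A),i(A)+\dim\ker(A(S)-\mathrm{id})]$. It is also the direction actually invoked in Lemma~\ref{orbit_properties_estimates}, where it is combined with Lemma~\ref{index_estimate} to bound $\mu_{CZ}^{\tilde\tau_{out}}(\tilde\gamma_i)$ from above. So the third bullet in the definition of an $\epsilon$-perturbation simply has the difference written the wrong way round.

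Your proposed reconciliation does not work. The quoted property of Long's index says a nearby non-degenerate path has index between $i(A)$ and $i(A)+\dim\ker(A(S)-\mathrm{id})$. Perturbation therefore raises the index by at most the nullity; it does not lower it, so this is not ``exactly the defining property quoted''. Moreover, $\mu_{CZ}$ of a degenerate orbit is defined to be Long's $i(A)$. Switching to another extension, such as $i(A)+\dim\ker(A(S)-\mathrm{id})$, would change the content of the statement and of Lemma~\ref{index_estimate}. You should simply state the inclusion you actually prove and note that the definition has a sign typo.
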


\begin{proof}
    Suppose that $f$, the points $x=\gamma(0)$, $y=\tilde{\gamma}(0)$, and $\tilde{S}\in (S-\delta,S+\delta)$ satisfy the hypotheses. We may assume $\delta<\epsilon$.
    Let $\tilde{\phi}^t$ denote the time-$t$ Reeb flow on $(Y,e^f\alpha)$. Fix a tubular neighbourhood $\nu$ for $\gamma$, and a smooth trivialisation $\sigma$ of $\nu^*\xi$. 
    
    For sufficiently small $\delta$, the maps $[0,S]\times Y\rightarrow Y$ given by $(t,x)\mapsto \phi^t(x)$ and $(t,y)\mapsto \tilde{\phi}^{S\tilde{S}^{-1}t}(y)$ will be arbitrarily $C^\infty$-close. In particular, we can ensure that the map $\tilde{\gamma}$ lifts to a section of the normal bundle in $\nu$. Let $\tau$ and $\tilde{\tau}$ denote the restriction of $\sigma$ to $\gamma$ and $\tilde{\gamma}$ respectively, and $\Sigma$ any cylinder in $\nu$ connecting $\gamma$ to $\tilde{\gamma}$. Because $\sigma$ extends over $\Sigma$, $c_1^{\tau,\tilde{\tau}}(\Sigma)=0$. 
    
    From the trivialisation $\sigma$ we obtain paths
    \begin{equation}
    \begin{split}
    A(t)&=\tau(t)^{-1}\circ (D\phi^t)_x|_\xi\circ \tau(0)
    \\\tilde{A}(t)&=\tilde{\tau}(\tilde{S}S^{-1}t)^{-1}\circ (D\tilde{\phi}^{S\tilde{S}^{-1}t})_y|_\xi\circ \tilde{\tau}(0)
    \end{split}
    \end{equation}
    based at $id$ in $Sp(2n-2)$ such that $\mu_{CZ}^\tau(\gamma)=i(A)$ and $\mu_{CZ}^{\tilde{\tau}}(\tilde{\gamma})=i(\tilde{A})$. For small enough $\delta$, $\tilde{A}$ will, in particular, be arbitrarily $C^0$-close to $A$, so we can ensure that $i(\tilde{A})\in [i(A),i(A)+\dim\ker(A(S)-id)]$, whenever $\det(\tilde{A}(\tilde{S})-id)\neq 0$.
\end{proof}

\begin{lem}
\label{fixed_period_perturbation}
Fix a metric on $Y$, and $S>0$. There exists $\delta>0$ such that whenever $||f||_{C^\infty}<\delta$ and $|\tilde{S}-S|<\delta$, for every non-degenerate $\tilde{S}$-periodic Reeb orbit $\tilde{\gamma}$ in $(Y,e^f\alpha)$, there exists an $S$-periodic Reeb orbit $\gamma$ in $(Y^R,\alpha)$ such that $\tilde{\gamma}$ is an $\epsilon$-perturbation of $\gamma$.
\end{lem}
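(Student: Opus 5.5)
We argue by contradiction together with compactness, reducing to Lemma \ref{single_orbit_perturbation}. Fix $S>0$ and $\epsilon>0$. For each $S$-periodic Reeb orbit $\gamma$ of $(Y^R,\alpha)$, Lemma \ref{single_orbit_perturbation} gives a $\delta_\gamma>0$: if a non-degenerate $\tilde S$-periodic orbit $\tilde\gamma$ of $e^f\alpha$ has $\|f\|_{C^\infty}<\delta_\gamma$, $|\tilde S-S|<\delta_\gamma$ and $d(\gamma(0),\tilde\gamma(0))<\delta_\gamma$, then $\tilde\gamma$ is an $\epsilon$-perturbation of $\gamma$. The $S$-periodic orbits of $\alpha$ form a possibly non-isolated family; under $\gamma\mapsto\gamma(0)$ it is the compact set $\mathrm{Fix}_S=\{x\in Y^R:\phi^S(x)=x\}$. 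This set is closed in the compact manifold $Y^R$, hence compact.

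\textbf{Step 1: uniform $\delta$ near $\mathrm{Fix}_S$.} The open balls $B(x,\delta_{\gamma_x}/2)$, for $x\in\mathrm{Fix}_S$ with $\gamma_x(t)=\phi^t(x)$, cover $\mathrm{Fix}_S$. Choose a finite subcover centred at $x_1,\dots,x_N$ and set
\[
\delta_1=\min_j \delta_{\gamma_{x_j}}/2 .
\]
Suppose $\tilde\gamma(0)$ lies within $\delta_1$ of some point $x\in\mathrm{Fix}_S$, and $x$ lies in the ball around $x_j$. Then $d(\tilde\gamma(0),x_j)<\delta_{\gamma_{x_j}}$. If also $\|f\|_{C^\infty}<\delta_1$ and $|\tilde S-S|<\delta_1$, then $\tilde\gamma$ is an $\epsilon$-perturbation of $\gamma_{x_j}$.

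\textbf{Step 2: the starting point of $\tilde\gamma$ is close to $\mathrm{Fix}_S$.} We claim there is $\delta_2>0$ such that $\|f\|_{C^\infty}<\delta_2$ and $|\tilde S-S|<\delta_2$ force every $\tilde S$-periodic orbit $\tilde\gamma$ of $e^f\alpha$ to satisfy $d(\tilde\gamma(0),\mathrm{Fix}_S)<\delta_1$. Suppose not. Then there are sequences $f_k\to 0$ in $C^\infty$, $\tilde S_k\to S$, and points $y_k$ with $\tilde\phi_k^{\tilde S_k}(y_k)=y_k$ and $d(y_k,\mathrm{Fix}_S)\ge\delta_1$. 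By compactness of $Y^R$ we may pass to a subsequence with $y_k\to y$. The Reeb vector field of $e^{f_k}\alpha$ converges in $C^\infty$ to that of $\alpha$, since it is determined algebraically by $e^{f_k}\alpha$ and $d(e^{f_k}\alpha)$. Hence $(t,z)\mapsto\tilde\phi_k^t(z)$ converges uniformly on $[0,S+1]\times Y$ to $\phi^t$, which gives $\phi^S(y)=y$. So $y\in\mathrm{Fix}_S$, while $d(y,\mathrm{Fix}_S)\ge\delta_1$, a contradiction.

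Taking $\delta=\min(\delta_1,\delta_2)$ and combining the two steps proves the lemma.

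The main obstacle is the degeneracy of $\alpha$. Its periodic orbits come in torus families $O(\gamma)$, so we cannot match each $\tilde\gamma$ to a unique nearby orbit. This is why the argument goes through the compact set $\mathrm{Fix}_S$ rather than through isolated orbits. It also relies on the choice of $\delta_\gamma$ in Lemma \ref{single_orbit_perturbation} being open in the basepoint: the same $\delta_\gamma$ must serve for all $\tilde\gamma$ starting in a whole ball around $\gamma(0)$, and that lemma is stated in exactly this form. A further subtlety is whether $\epsilon$-perturbation requires $|S-\tilde S|<\epsilon$. This is automatic once $\delta<\epsilon$, as in the proof of Lemma \ref{single_orbit_perturbation}.
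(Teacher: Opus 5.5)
Your proposal is correct and follows essentially the same route as the paper: cover the compact set $\{\phi^S(x)=x\}$ by finitely many balls supplied by Lemma \ref{single_orbit_perturbation}, then use compactness to show that periodic points of the perturbed flow must land in one of those balls. The only difference is cosmetic. You use half-radius balls, a triangle inequality and an explicit sequential contradiction argument, whereas the paper argues directly that $d(y,\tilde{\phi}^{\tilde{S}}(y))>0$ on the compact complement of the union of the balls for all sufficiently small perturbations.
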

\begin{proof}
    For each $S$-periodic Reeb orbit $\gamma$ in $(Y^R,\alpha)$, let $\delta_\gamma$ be a constant satisfying Lemma \ref{single_orbit_perturbation}, and let $U_\gamma=\{y\in Y| d(y,\gamma(0))<\delta_\gamma\}$. The sets $U_\gamma$ form an open cover of the set $\{\phi^S(x)=x\}\subset Y$. We can choose a finite subcover indexed by $\Gamma$.

    By compactness there exists $\delta'>0$ such that for all $y\in Y$, either $d(y,\tilde{\phi}^{\tilde{S}}(y))>0$ whenever $|\tilde{S}-S|<\delta'$ and $||f||_{C^\infty}<\delta'$, or $y\in U_\gamma$ for some $\gamma\in\Gamma$. Let $\delta=\min(\delta',\min_{\gamma\in\Gamma}\delta_\gamma)$.

    Now suppose $||f||_{C^\infty}<\delta$, $\tilde{\gamma}$ is a non-degenerate $\tilde{S}$-periodic Reeb orbit in $(Y,e^f\alpha)$ with $\tilde{\gamma}(0)=y$, and $|\tilde{S}-S|<\delta$. Then by assumption, since $\delta<\delta'$, $y\in U_\gamma$ for some $\gamma\in\Gamma$. Because $\delta<\delta_\gamma$, $\tilde{\gamma}$ is an $\epsilon$-perturbation of $\gamma$.
\end{proof}

\begin{lem}
\label{bounded_period_perturbation}
    Fix a metric on $Y$, and $S_{\max}>0$. There exists $\delta>0$ such that whenever $||f||_{C^\infty}<\delta$ and $\tilde{S}<S_{\max}$, for any non-degenerate $\tilde{S}$-periodic Reeb orbit $\tilde{\gamma}$ in $(Y,e^f\alpha)$, there exists an $S$-periodic Reeb orbit $\gamma$ in $(Y^R,\alpha)$ such that $\tilde{\gamma}$ is an $\epsilon$-perturbation of $\gamma$ and $S<S_{\max}$.
\end{lem}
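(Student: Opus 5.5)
The plan is to reduce to Lemma \ref{fixed_period_perturbation} by a compactness argument over the period interval $[0,S_{\max}]$. There are two issues: small periods near $0$, and the requirement $S<S_{\max}$ on the unperturbed period.

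First I handle small periods. The unperturbed Reeb flow of $(Y^R,\alpha)$ has a positive minimal period $S_0>0$. This holds because $Y^R$ is compact and the Reeb vector field is nowhere vanishing, so there is a uniform lower bound on periods of closed orbits. For $f$ sufficiently $C^\infty$-small, the Reeb vector field of $e^f\alpha$ is $C^0$-close to that of $\alpha$. A standard argument, via a flow-box cover with a uniform time $T$ such that no trajectory returns within time $T$, then gives that every closed orbit of $(Y,e^f\alpha)$ has period at least $S_0/2$, provided $\|f\|_{C^\infty}<\delta_0$. This reduces the problem to $\tilde S\in[S_0/2,S_{\max})$.

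Next I cover the interval. For each $S\in[S_0/2,S_{\max}]$, Lemma \ref{fixed_period_perturbation} (applied with $\epsilon$ fixed) gives $\delta_S>0$. The intervals $(S-\delta_S,S+\delta_S)$ cover the compact interval $[S_0/2,S_{\max}]$, so finitely many suffice, indexed by $S_1,\dots,S_k$. I set $\delta=\min(\delta_0,\delta_{S_1},\dots,\delta_{S_k})$. Now let $\|f\|<\delta$ and let $\tilde\gamma$ be non-degenerate with $\tilde S<S_{\max}$. Then $\tilde S\ge S_0/2$, and $\tilde S$ lies within $\delta_{S_j}$ of some $S_j$. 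Lemma \ref{fixed_period_perturbation} then produces an $S_j$-periodic $\gamma$ of which $\tilde\gamma$ is an $\epsilon$-perturbation.

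The main obstacle is ensuring $S_j<S_{\max}$, since the covering point may be $S_{\max}$ itself or lie slightly above $\tilde S$. I would try the following first. The set of periods of closed Reeb orbits of $\alpha$ in $(0,S_{\max}]$ is closed. If $S_{\max}$ is itself a period, I replace $S_{\max}$ by a slightly larger value that is not a period and is arbitrarily close, or else argue directly as follows. Periods are discrete modulo the torus-orbit structure (by \eqref{Reeb}, $a(\gamma)\in S^{-1}\Lambda_{I(\gamma)}$), so I may choose $S'<S_{\max}$ with no periods in $[S',S_{\max})$ and cover only $[S_0/2,S']$. For $\tilde S\in[S',S_{\max})$, the compactness argument in the proof of Lemma \ref{fixed_period_perturbation} shows that, for $f$ small, the orbit must lie near an $\alpha$-orbit of period in $[S',S_{\max}]$. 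The only such period could be $S_{\max}$. In that case I shrink $\delta$, applying Lemma \ref{fixed_period_perturbation} at periods strictly below $S_{\max}$, and invoke the failure of the closing condition $d(y,\tilde\phi^{\tilde S}y)>0$ there. If this gap cannot be excluded, I would weaken to $S\le S_{\max}$, or to $S<S_{\max}+\epsilon$, which is what the later applications plausibly need.
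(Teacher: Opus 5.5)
Your core argument is the paper's proof. You cover the range of periods by the intervals $(S-\delta_S,S+\delta_S)$ supplied by Lemma~\ref{fixed_period_perturbation}, extract a finite subcover, and take $\delta$ to be the minimum of the finitely many $\delta_{S_j}$. Your preliminary step is a genuine improvement rather than a detour. The paper covers $[0,S_{\max}]$, and so applies Lemma~\ref{fixed_period_perturbation} at $S=0$, which is outside its stated range $S>0$ and meaningless there. Your uniform lower bound on periods of the perturbed flow lets you work on a compact interval bounded away from $0$. The flow-box argument already gives a uniform return time $T>0$ for all $C^0$-close nonvanishing fields, and covering $[T,S_{\max}]$ is enough. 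Upgrading $T$ to $S_0/2$ would need a further compactness step on $Y\times[T,S_0/2]$, which you do not need.

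The step that fails is your attempt to force $S<S_{\max}$. It cannot be repaired, because the strict inequality is false in general. Equation~\eqref{Reeb} by itself does not make the period spectrum discrete, since $a$ varies along $Y^R$ wherever several $q_v'$ are nonzero. More decisively, even when $S_{\max}$ is an isolated period, the closing-condition argument of Lemma~\ref{fixed_period_perturbation} is uniform only for $\tilde S$ in a compact set of non-periods. It gives nothing as $\tilde S\to S_{\max}^-$.

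Here is a concrete counterexample. Let $D$ be a single hyperplane in $\mathbb{CP}^n$. Then $Y^R$ is a round sphere in $\mathbb{C}^n$, and its Reeb flow is a constant multiple of the Hopf flow, so every simple orbit has the same period $P$. Take $S_{\max}=P$ and $f=\delta' g$, where $g\le 0$ is pulled back from a Morse function on $\mathbb{CP}^{n-1}$ with $\min g<0$. For every $\delta$, a small $\delta'$ gives $\|f\|_{C^\infty}<\delta$ and a non-degenerate orbit of $e^f\alpha$ (the fibre over a minimum of $g$) of period $Pe^{\delta'\min g}<S_{\max}$. Every $\alpha$-orbit, however, has period at least $P$.

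The paper does not address this either: its covering centres lie in $[0,S_{\max}]$, so its argument only yields $S\le S_{\max}$. Your fallback $S\le S_{\max}$ is therefore the correct conclusion. It also suffices: in Lemma~\ref{orbit_properties_estimates} only the hypothesis $\tilde S_i<S_{\max}$ is used, and the upper bound on the unperturbed periods $S_i$ is never invoked.
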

\begin{proof}
By Lemma \ref{fixed_period_perturbation}, for any $S\in [0,S_{\max}]$, there exists $\delta_S>0$ such that whenever $||f||_{C^\infty}<\delta_S$, $|S-\tilde{S}|<\delta_S$, and $\tilde{\gamma}$ is an $\tilde{S}$-periodic Reeb orbit in $(Y,e^f\alpha)$, there exists an $S$-periodic Reeb orbit $\gamma$ in $(Y^R,\alpha)$ such that $\tilde{\gamma}$ is an $\epsilon$-perturbation of $\gamma$.

By compactness, there exist $S_0<S_1<...<S_r$ for some $r\in\mathbb{N}$ such that the intervals $(S_i-\delta_{S_i},S_i+\delta_{S_i})$ for $i=1,...,r$ cover $[0,S_{\max}]$. Let $\delta=\min_{i=1}^r\delta_{S_i}$.

Suppose $\tilde{\gamma}$ is a non-degenerate $\tilde{S}$-periodic Reeb orbit in $(Y,e^f\alpha)$ with $\tilde{S}<S_{\max}$, and $||f||_{C^\infty}<\delta$. Now for some $i$, $|\tilde{S}-S_i|<\delta_{S_i}$, and since $||f||_{C^\infty}<\delta_{S_i}$, there exists an $S_i$-periodic Reeb orbit $\gamma$ in $(Y^R,\alpha)$ such that $\tilde{\gamma}$ is an $\epsilon$-perturbation of $\gamma$.
\end{proof}

\section{Neck Stretching Argument}
\begin{defn}
\label{perturbed_surface}
Given any smooth function $f:Y^R\rightarrow (-\infty,0]$, we obtain a contact-type hypersurface $Y^R_f$ by letting
\begin{equation}
    Y^R_f=\{(y,\rho)\in (0,1]_\rho\times Y^R\subset M\setminus L\; |\; \rho=e^{f(y)}\}.
\end{equation}
There is a natural map $p_f:Y^R\rightarrow Y^R_f$ given by $p_f(y)=(e^{f(y)},y)$, which is homotopic to the inclusion $Y^R\subset M\setminus L$. The restriction $\alpha_f=\theta|_{Y^R_f}$ is a contact form, and $p_f^*\alpha_f=e^f\alpha$.
\end{defn}

\begin{lem}
\label{neck_stretch}
Let $B_r\subset \mathbb{C}^n$ denote the standard ball of radius $r>0$. Suppose there exists a symplectic embedding $\iota:B_r\rightarrow \{\rho^0>\sigma\}$, for some $\sigma\in (0,1)$. Let $x=\iota(0)$, and fix $y\in L$. Let $f:Y^R\rightarrow (-\infty,0]$. Suppose all Reeb orbits in $(Y^R,e^f\alpha)$ are non-degenerate. Suppose $A\in H_2(M;\mathbb{Z})$ satisfies Hypothesis \ref{rat_con}.

There exists an almost-complex structure $J_0$ on $M$ suitable for neck-stretching such that the following holds. Let $(M_\infty,J_\infty)$ denote the split almost-complex manifold obtained by splitting $(M,J_0)$ along the contact-type hypersurface $Y_{\log\sigma+f}^R$. This consists of a bottom level $\hat{X}$ which is the completion of $(X,\theta)$, finitely many levels symplectomorphic to the symplectisation of $(Y^R,\alpha)$, and a top level symplectomorphic to $M\setminus L$, equipped with certain compatible almost-complex structures.

There is a $J_\infty$-holomorphic building $u$ in $(M_\infty,J_\infty)$ such that
\begin{itemize}
    \item $u$ is genus-$0$
    \item $[\overline{u}]=A$
    \item $ev(u)=(x,y)$
    \item $\iota^{-1}(u_+)$ is a holomorphic curve in $B_r$
    \item $u_-$ is (possibly a multiple cover of) a Fredholm-regular curve $\tilde{u}_-$,
    \item $\tilde{u}_-$ is a regular point for the evaluation map $ev$.
\end{itemize}
where $u_+$ is the component in $M\setminus L$ and $u_-$ the component in $\hat{X}$, which carry the marked points.
\end{lem}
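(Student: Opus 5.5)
We follow the standard neck-stretching scheme of Biran and of SFT compactness (Bourgeois--Eliashberg--Hofer--Wysocki--Zehnder). The first step is to choose $J_0$. On the ball we want $J_0$ to agree with $\iota_*J_{st}$ on $\iota(B_r)$. This is possible because $\iota(B_r)\subset\{\rho^0>\sigma\}$, and for $R$ small $\rho^R$ is close to $\rho^0$. Since $f\le 0$, the hypersurface $Y^R_{\log\sigma+f}=\{\rho^R=\sigma e^{f}\}$ lies in $\{\rho^R\le\sigma\}$. After shrinking $r$ slightly (harmless, since we bound a supremum), the closure of $\iota(B_r)$ is disjoint from a collar of this hypersurface. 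In that collar we take $J_0$ cylindrical, i.e. invariant under the Liouville flow and sending $\partial_\rho$ to the Reeb field of $e^f\alpha$. Elsewhere we take it compatible and generic; in particular we make it generic on the region of $\hat X$ containing $y$. Genericity there is available by standard transversality for somewhere-injective curves, and it is compatible with the prescribed structure on the ball because the two regions are disjoint.

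Next we stretch. Let $J_k$ be obtained by inserting necks $[-k,k]\times Y^R_{\log\sigma+f}$. The $J_k$ agree with $J_0$ near $\iota(B_r)$ and near $y$, and each is $\omega$-compatible. By Hypothesis \ref{rat_con}, for each $k$ there is a genus-$0$ $J_k$-holomorphic curve $u_k$ in class $A$ through $x$ and $y$. Add two marked points mapping to $x$ and $y$. The energy is uniformly bounded by $\omega(A)$, and all Reeb orbits of $e^f\alpha$ are non-degenerate. SFT compactness therefore gives a subsequence converging to a genus-$0$ $J_\infty$-holomorphic building $u$ in $M_\infty$, with $[\overline u]=A$ and marked points at $x$ and $y$. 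The marked point at $x$ lies in the top level $M\setminus L$, since $x\in\{\rho>\sigma\}$. The marked point at $y\in L$ lies in the bottom level $\hat X$. Let $u_+$ and $u_-$ be the components carrying them. On $\iota(B_r)$ the structure $J_\infty$ equals $\iota_*J_{st}$, so $\iota^{-1}(u_+)$ is a holomorphic curve in $B_r$.

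The main obstacle is the last two bullets: regularity of $u_-$ and regularity of its underlying simple curve as a point of the evaluation map. We first write $u_-=\tilde u_-\circ\varphi$ with $\tilde u_-$ somewhere injective; this is possible since the domain is connected, punctured and genus $0$. We then need $\tilde u_-$ Fredholm regular for generic $J$ on $\hat X$, which follows from standard transversality for somewhere-injective punctured curves with non-degenerate asymptotics, achieved by perturbing $J$ in a compact region of $\hat X$. We also need $y$ to be a regular value of evaluation on the corresponding moduli space. Since $J$ is generic, the moduli spaces of simple curves in $\hat X$ of each asymptotic type are cut out transversally, and there are countably many such types. By Sard's theorem, for $y$ outside a measure-zero set every relevant evaluation map is regular at $y$. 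So we choose $y\in L$ generically within $L$.

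One subtlety remains: $L$ has measure zero, so a generic point of $\hat X$ need not lie in $L$. We handle this in two ways. First, we allow perturbing $J$ with $y$ fixed; the parametric version of transversality with a point constraint at $y$ still holds, by the usual argument of varying $J$ near $y$. Second, we may use that the constraint at $y$ can be moved along the Liouville flow, which preserves the skeleton. We expect the perturbation of $J$ near $y$ to suffice. Finiteness of the levels and genus $0$ then complete the list of properties.
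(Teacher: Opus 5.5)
Your proposal is correct and follows the paper's proof. The shared steps are: $J_+$ standard on $\iota(B_r)$ and cylindrical near $Y^R_{\log\sigma+f}$; $J_-$ generic on the Liouville subdomain $M_-$ with $y$ held fixed; Hypothesis \ref{rat_con} applied on each $(M_\tau,\omega_\tau)\cong(M,\omega)$; then SFT compactness. Genericity of $J_-$ gives Fredholm regularity for curves with an injective point in $M_-$, which $\tilde u_-$ has because it passes through $y$, and it gives regularity of $ev$ by the standard universal-moduli-space argument.

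Your Sard-in-$y$ and Liouville-flow detours are unnecessary, and the latter could not help anyway, since the flow preserves the measure-zero set $L$. Because $J_0$ may depend on the fixed $y$, genericity of $J_-$ alone suffices. Similarly, you need neither to shrink $r$ nor to take $R$ small for $\iota(B_r)\subset M_+$: $\rho^R>\rho^0$ along Liouville flowlines, so $\{\rho^0>\sigma\}\subset\{\rho^R>\sigma\}$.
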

\begin{proof}
Let $V=Y^R_{\log\sigma+f}$ and $\beta=\alpha_{\log\sigma+f}$.
The contact-type hypersurface $V$ splits $M$ into two manifolds with boundary: $M_+$, the convex part containing $B_r$, and $M_-$, a Liouville subdomain of $X$. Let $V_\pm$ denote the boundary of $M_\pm$, which is a copy of $V$. As in Section 2.7 of \cite{CM}, we can define a family of symplectic manifolds
\begin{equation}
M_\tau=M_+\cup_{V_+}[-\tau,0]\times V \cup_{V_-}M_-,
\end{equation}
where we identify $V_+$ with $\{0\}\times V$ and $V_-$ with $\{-\tau\}\times V$. This gluing is smooth because of the presence of the contact vector field $Z$ near $V$.

Similarly, we can define the completion $\hat{X}$ of $X$ as a gluing
\begin{equation}
\hat{X}=M_-\cup_{V_-}\mathbb{R}_{\geq 0}\times V,
\end{equation}
here identifying $V_-$ with $V\times \{0\}$. This completion is naturally a Liouville manifold with primitive $\hat{\theta}$ given by $\hat{\theta}|_{M_-}=\theta$ and $\hat{\theta}|_{\mathbb{R}_{\geq 0}\times V}=e^t\beta$. The flow of $Z$ gives a symplectomorphism
\begin{equation}
    M\setminus L\cong M_-\cup_{V_-}\mathbb{R}_{\leq 0}\times V,
\end{equation}
where $V_-$ is identified with $\{0\}\times V$.

We can choose an almost-complex structure $J_V$ on $V\times\mathbb{R}_t$ such that
\begin{itemize}
    \item $J_V$ is $d(e^t\beta)$-compatible
    \item $\beta\circ J_V=dt$
    \item $\mathcal{L}_{\partial_t}J_V=0$.
\end{itemize}
For some $\epsilon>0$, the vector field $Z$ gives us a symplectomorphism from a neighbourhood of $V$ in $M$ to $((-\epsilon,\epsilon)_t\times V,d(e^t\beta))$. Fix an almost-complex structure $J_+$ on $M_+$ such that
\begin{itemize}
    \item $J_+$ is $\omega$-compatible,
    \item $J_+|_{[0,\epsilon)\times V}=J_V$
    \item $J_+$ is standard on $\iota(B_r)$, i.e. $\iota^*J_+=i$.
\end{itemize}

Now consider the space of $\omega$-compatible almost-complex structures $J_-$ on $M_-$ satisfying
\begin{equation}
J_-|_{(-\epsilon,0]\times V}=J_V.
\label{standard_J_-}
\end{equation}
Given any such $J_-$, we obtain a unique almost-complex structure $J_\tau$ on $M_\tau$ such that
\begin{itemize}
    \item $J_\tau|_{M_\pm}=J_\pm$,
    \item $J_\tau|_{[-\tau,0]\times V}=J_V$.
\end{itemize}
and similarly a limiting family of almost-complex structures $J_\infty$ on $M_\infty$. The family of almost-complex manifolds $(M_\tau,J_\tau)$ and the limiting building $(M_\infty,J_\infty)$ agrees with the splitting construction described in Section 3.4 of \cite{Bou}. 

By hypothesis, all Reeb orbits in $(V,\beta)$ are non-degenerate. By Theorem 7.2 of \cite{Wendl}, there is a dense subset of those $\omega$-compatible $J_-$ satisfying \eqref{standard_J_-} such that any asymptotically cylindrical $J_\infty$-holomorphic curve in $\hat{X}$ which has an injective point in $M_-$ is Fredholm-regular. Similarly, we can ensure that any such curve is a regular point for $ev$, as in Section 4.4 of \cite{Wendl2}. Furthermore, any $J_\infty$-holomorphic curve in $\hat{X}$ will be a multiple cover of a curve with an injective point $M_-$.

As described in Section 2.7 of \cite{CM}, We can define a family of symplectic forms $\omega_\tau$ for each $\tau\geq 0$ by letting $\omega_\tau|_{M_+}=\omega|_{M_+}$, $\omega_\tau|_{[-\tau,0]_t\times V}=d(e^t\beta)$, and $\omega_\tau|_{M_-}=e^{-\tau}\omega|_{M_-}$. It follows that $J_\tau$ is $\omega_\tau$-compatible for each $\tau$. For each $\tau\geq 0$, $(M_\tau,\omega_\tau)$ is symplectomorphic to $(M,\omega)$ for each $\tau$. We therefore obtain a genus-$0$ $J_\tau$-holomorphic curve $u_\tau$ in $M_\tau$ with homology class $[u_\tau]=A$, with two marked points such that $ev(u_\tau)=(x,y)$. By Theorem 10.3 of \cite{Bou}, some subsequence of $(u_\tau)$ converges to a split holomorphic building $u$ in $(M_\infty,J_\infty)$, of genus $0$ with two marked points such that $ev(u)=(x,y)$, with total homology class $A$. The components $u_\pm$ have the required properties by our choice of $J_\pm$.
\end{proof}
\section{Virtual Dimension Formula}
We now collect all of our previous results to establish properties of the asymptotes of the components of our holomorphic building $u$.
\begin{lem}
\label{orbit_properties_estimates}
There exists a function $f:(-\infty,0]\rightarrow Y^R$ such that the following holds.

Let $u$ be a holomorphic building as in Lemma \ref{neck_stretch}. Let $\tilde{\gamma}_i$ for $i=1,...,k$ be the Reeb orbits to which the positive punctures of some component of $u$ are asymptotic. We view these as Reeb orbits in the perturbed contact hypersurface $(Y^R,e^f\alpha)$ via the map $p_f$, after rescaling time by a factor of $\sigma$. 

Then there exist Reeb orbits $\gamma_i$ of period $S_i$ in the unperturbed hypersurface $(Y^R,\alpha)$ for $i=1,...,k$, a trivialisation $\tilde{\tau}_{out}$ of $\tilde{\gamma}_i^*\xi$, and a disk $u_{out}(\tilde{\gamma}_i)\in\pi_2(M,\tilde{\gamma}_i)$ bounding $\tilde{\gamma}_i$ for each $i$, satisfying the following properties:
\begin{enumerate}
    \item $\tilde{\gamma}_i$ is non-degenerate,
    \item $\mu_{CZ}^{\tilde{\tau}_{out}}(\tilde{\gamma}_i)\leq -2S_i\sum_{v\in I(\gamma_i)}a_v(\gamma_i)w_v+n-1$
    \item $2c_1^{\tilde{\tau}_{out}}(u_{out}(\tilde{\gamma}_i))=0$,
    \item $\pmb{\kappa}(u_{out}(\tilde{\gamma}_i))=S_i\sum_{v\in I(\gamma_i)}a_v(\gamma_i)\lambda_v$,
\item $\omega(u_{out}(\tilde{\gamma}_i))<C'\epsilon(R)\pmb{\kappa}(u_{out}(\tilde{\gamma}_i))$,
\end{enumerate}
where $C'>0$ is a constant depending only on $\kappa_{\min}$.
\end{lem}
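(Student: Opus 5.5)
The plan is to choose $f$ as a $C^\infty$-small generic perturbation and transport everything from the unperturbed orbits via the perturbation lemmas. (I read the statement as asking for a function $f:Y^R\rightarrow(-\infty,0]$, as in Definition \ref{perturbed_surface}.)

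\textbf{Period bound.} The total $\omega_\tau$-energy of the building is bounded by $\omega(A)$. Hence the periods of all asymptotic orbits of all components of $u$ are bounded, after the rescaling by $\sigma$, by some $S_{\max}$ depending only on $\omega(A)$ and $\sigma$. This uses that the action of an orbit of $(Y^R,e^f\alpha)$ is its period, together with positivity of energy of each level.

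\textbf{Choice of $f$.} Fix $\epsilon>0$ small and let $\delta$ be the constant of Lemma \ref{bounded_period_perturbation} for this $S_{\max}$. A standard genericity argument gives $f\le 0$ with $\|f\|_{C^\infty}<\delta$ such that every Reeb orbit of $(Y^R,e^f\alpha)$ is non-degenerate. This gives property (1), and also makes Lemma \ref{neck_stretch} applicable.

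\textbf{Choice of $\gamma_i$ and the caps.} Lemma \ref{bounded_period_perturbation} now gives, for each $\tilde\gamma_i$, an $S_i$-periodic orbit $\gamma_i$ of $(Y^R,\alpha)$ with $S_i<S_{\max}$. It also gives a cylinder $\Sigma_i$ joining $\gamma_i$ to $\tilde\gamma_i$, with trivialisations $\tau,\tilde\tau$ satisfying $c_1^{\tau,\tilde\tau}(\Sigma_i)=0$ and
\[
\mu_{CZ}^{\tau}(\gamma_i)-\mu_{CZ}^{\tilde\tau}(\tilde\gamma_i)\in[0,\dim\ker((D\phi^1)_{\gamma_i(0)}|_\xi-id)].
\]
Define $u_{out}(\tilde\gamma_i)$ as the gluing $u_{out}(\gamma_i)\#\Sigma_i$. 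Define $\tilde\tau_{out}$ as the trivialisation of $\tilde\gamma_i^*\xi$ obtained by transporting $\tau_{out}$ across $\Sigma_i$. Replacing $\tau$ by $\tau_{out}$ shifts both Conley--Zehnder indices and the relative Chern number consistently, so we may assume $\tau=\tau_{out}$.

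\textbf{Properties (2) and (3).} Property (3) follows from additivity of relative $c_1$:
\[
c_1^{\tilde\tau_{out}}(u_{out}(\tilde\gamma_i))=c_1^{\tau_{out}}(u_{out}(\gamma_i))+c_1^{\tau,\tilde\tau}(\Sigma_i)=0.
\]
For property (2), the perturbation inequality gives $\mu_{CZ}^{\tilde\tau_{out}}(\tilde\gamma_i)\le\mu_{CZ}^{\tau_{out}}(\gamma_i)$. Lemma \ref{index_estimate} then bounds $\mu_{CZ}^{\tau_{out}}(\gamma_i)$ by $2S_i\sum_v a_v w_v+n-1$. That estimate carries the wrong sign relative to (2), so the sign convention must be reconciled. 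The route I would take is that the relevant orbits, seen from $\hat X$ as positive punctures, are oriented so that the cap index enters with $\mu_{CZ}(\gamma^{-1})=-\mu_{CZ}(\gamma)$. Equivalently, one applies the lower Long iteration bound, $\mu\ge\hat\mu-(n-1)$, which the proof of Lemma \ref{index_estimate} mentions but does not use. I expect this bookkeeping of signs and trivialisations to be the main obstacle, since whether (2) holds exactly as written depends on it.

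\textbf{Properties (4) and (5).} Since $\Sigma_i$ lies in $Y^R\subset X$, and $\pmb\kappa$ is a relative class that vanishes on chains in $X$, we get
\[
\pmb\kappa(u_{out}(\tilde\gamma_i))=\pmb\kappa(u_{out}(\gamma_i))=S_i\sum_v a_v(\gamma_i)\lambda_v ,
\]
using $\pmb\kappa(\delta_v)=\kappa_v$. This matches (4) with $\lambda_v$ understood as the relevant pairing, via the identification of $\kappa_v$ with $\lambda_v$ in the hyperplane setting. For (5), Lemma \ref{outer_caps_action} gives $\omega(u_{out}(\gamma_i))\le C\epsilon(R)\pmb\kappa(u_{out}(\gamma_i))$. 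The cylinder contributes $\omega(\Sigma_i)=O(\|f\|+|S_i-\tilde S_i|)$, which can be made smaller than $\epsilon(R)\kappa_{\min}\le\epsilon(R)\pmb\kappa(u_{out}(\tilde\gamma_i))$ by shrinking $\delta$. Then $C'=2/\kappa_{\min}$ works, and it depends only on $\kappa_{\min}$ as required.
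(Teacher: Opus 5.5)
Your architecture matches the paper's: the energy bound $\tilde S_i\leq\sigma^{-1}\omega(A)$, a $C^\infty$-small generic $f\leq 0$, Lemma \ref{bounded_period_perturbation}, caps $u_{out}(\gamma_i)\#\Sigma_i$, and $\tau_{out}$ transported across $\Sigma_i$. Properties (1), (3) and (5) go through essentially as you describe.

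The gap is property (2). You correctly notice that Lemma \ref{index_estimate}, as displayed, has $+2S\sum_v a_vw_v$, but neither of your repairs works.
\begin{itemize}
\item The orientation-reversal fix is not available. Asymptotic orbits at punctures of either sign are traversed in the Reeb direction. The sign of a puncture enters only through the sign in front of $\mu_{CZ}$ in the index formula, which Lemma \ref{k_estimate} already accounts for. The reversed loop is not a Reeb orbit, so there is no $\mu_{CZ}(\gamma^{-1})=-\mu_{CZ}(\gamma)$ to invoke.
\item Long's lower iteration bound $\mu\geq\hat\mu-(n-1)$ cannot give an upper bound such as (2), whatever the sign of $\hat\mu$.
\end{itemize}
The missing observation is that the slip is in the mean-index computation inside Lemma \ref{index_estimate}. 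Apply Lemma \ref{integral_orbit_index} to the iterates $\gamma^m$ with $mSa(\gamma)\in\mathbb{N}^{I(\gamma)}$. Because $\mu^{\tau_{out}}=\mu^{\tau_{eq}}-2c_1^{\tau_{eq}}(u_{out})$, this gives $|\mu_{CZ}^{\tau_{out}}(\gamma^m)+2mS\sum_v a_v(\gamma)w_v|\leq 2n-2$. Hence $\hat\mu_{CZ}^{\tau_{out}}(\gamma)=-2S\sum_v a_v(\gamma)w_v$. Long's \emph{upper} bound then yields $\mu_{CZ}^{\tau_{out}}(\gamma_i)+\dim\ker((D\phi^1)_{\gamma_i(0)}|_\xi-id)\leq -2S_i\sum_v a_v(\gamma_i)w_v+n-1$.

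As a check, take $\mathbb{CP}^2$ minus a line. The simple Hopf orbit has mean index $4$ in the trivialisation extending over the filling disc. Passing to $\tau_{out}$ subtracts $2c_1$ of a line, i.e. $6$, giving $-2=-2\cdot 1\cdot w_v$. The sign is essential: with $+$, Lemma \ref{k_estimate} would only give $0\leq 2k-4+\sum_i S_i\sum_v a_v(\lambda_v+2w_v)$, which carries no information about $\tilde\sigma_{crit}$.

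To finish (2), combine this with $\mu_{CZ}^{\tilde\tau_{out}}(\tilde\gamma_i)\leq\mu_{CZ}^{\tau_{out}}(\gamma_i)+\dim\ker((D\phi^1)_{\gamma_i(0)}|_\xi-id)$. This is the direction the proof of Lemma \ref{single_orbit_perturbation} actually establishes via Long's semicontinuity property, namely $i(\tilde A)\in[i(A),i(A)+\dim\ker(A(S)-id)]$. It is opposite to how the definition of an $\epsilon$-perturbation is written, and it is exactly what the $\dim\ker$ term in Lemma \ref{index_estimate} absorbs. With that term, (2) follows whichever direction you use, and this is how the paper's proof obtains it.

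Two smaller points.
\begin{itemize}
\item In (4), there is no general identification of $\kappa_v$ with $\lambda_v$. Since $\Sigma_i\subset X$, $u_{out}(\tilde\gamma_i)$ has the same class in $H_2(M,X)$ as $\sum_v S_ia_v(\gamma_i)\delta_v$. Your argument therefore proves both $\pmb\kappa(u_{out}(\tilde\gamma_i))=S_i\sum_v a_v\kappa_v$ and $\pmb\lambda(u_{out}(\tilde\gamma_i))=S_i\sum_v a_v\lambda_v$. The latter is the identity Lemma \ref{total_c1} uses, so (4) should be read with $\pmb\lambda$.
\item In (5), $\omega(\Sigma_i)<\epsilon(R)\kappa_{\min}$ together with $C=\kappa_{\min}^{-1}$ gives $C'=\kappa_{\min}^{-1}+1$ (the paper's $C+1$). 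You get $2/\kappa_{\min}$ only if you shrink $\delta$ further so that $\omega(\Sigma_i)<\epsilon(R)$.
\end{itemize}
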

\begin{proof}
Let $\tilde{S}_i$ denote the period of $\tilde{\gamma}_i$. Recall that $A\in H_2(M;\mathbb{Z})$, the total homology class of $u$, was fixed. To each component of $u$, we can associate a positive number called the $\omega$-energy (see Section 5.3 of \cite{Bou}). It follows from positivity of $\omega$-energy of all the components that
\begin{equation}
    \omega(A)-\sigma\sum_{i=1}^k\tilde{S}_i\geq 0,
    \label{omega_energy_estimate}
\end{equation}
so $\tilde{S}_i\leq \sigma^{-1}\omega(A)$ for all $i$. 

Let $\epsilon=\frac{\epsilon(R)}{\kappa_{\min}}$. For any $\delta>0$, we can choose $f$ such that all Reeb orbits in $(Y,e^f\alpha)$ are non-degenerate (so condition 1 is satisfied) and $||f||_{C^\infty}<\delta$. We can therefore apply Lemma \ref{bounded_period_perturbation} once and for all with $S_{\max}>\sigma^{-1}\omega(A)$, so we may assume that $\tilde{\gamma}_i$ is an $\epsilon$-perturbation of some $S_i$-periodic Reeb orbit $\gamma_i$ in $(Y^R,\alpha)$ for each $i$. For each $i$, let $\Sigma_i$ denote the cylinder joining $\tilde{\gamma}_i$ to $\gamma_i$, and $\tau$,$\tilde{\tau}$ the trivialisations of $\gamma_i^*\xi$,$\tilde{\gamma}_i^*\xi$ respectively, witnessing this fact. 

Let $u_{out}(\tilde{\gamma}_i)$ denote the class in $\pi_2(M,\tilde{\gamma}_i)$ obtained by attaching a representative of $u_{out}(\gamma_i)$ to $\Sigma_i$ along $\gamma_i$. We obtain a trivialisation $\tilde{\tau}_{out}$ for $\tilde{\gamma}_i^*\xi$ by letting $\tilde{\tau}_{out}(\tilde{S}_it)=\tau_{out}(S_it)\circ\tau(S_it)^{-1}\circ\tilde{\tau}(\tilde{S}_it)$. In other words, we extend the trivialisation $\tau_{out}$ for $\gamma_i^*\xi$ over $\Sigma_i$ to a trivialisation for $\tilde{\gamma}_i^*\xi$.

It follows from the fact that $\tilde{\gamma}_i$ is an $\epsilon$-perturbation of $\gamma_i$ that $c_1^{\tilde{\tau}_{out}}(u_{out}(\tilde{\gamma}_i))=c_1^{\tau_{out}}(u_{out}(\gamma_i))$ and $\mu_{CZ}^{\tilde{\tau}_{out}}(\tilde{\gamma}_i)-\mu_{CZ}^{\tau_{out}}(\gamma_i)\in [0,\dim\ker(D\phi^1)_{\gamma_i(0)}|_\xi-id)]$. Conditions 2 and 3 on the relative Chern numbers and Conley-Zehnder indices follow from the construction of the outer caps and Lemma \ref{index_estimate}.

Condition 4 follows from Lemma \ref{outer_caps_action} and the fact that $u_{out}(\gamma_i)$ and $u_{out}(\tilde{\gamma}_i)$ represent the same class in $H_2(M,X;\mathbb{Z})$. 

To show that Condition 5 holds, let $C>0$ be the constant from Lemma \ref{outer_caps_action} and note that $\pmb{\kappa}(u_{out}(\tilde{\gamma}_i))\geq \kappa_{\min}$, so $|\tilde{S}_i-S_i|\leq \epsilon(R)[\omega,\theta](u_{out}(\tilde{\gamma}_i))$. Now take $C'=C+1$.
\end{proof} 

\begin{lem}
Let $\tilde{\gamma}_i$ for $i=1,...,k$ be the asymptotes of $\tilde{u}_-$, and $\gamma_i$ the corresponding Reeb orbits in $(Y^R,\alpha)$ of period $S_i$, and $\tau_{out}$ the trivialisation of $\tilde{\gamma}_i^*\xi$ obtained from Lemma \ref{orbit_properties_estimates}. These satisfy
\label{total_c1}
    \begin{equation}
        2c_1^{\tau_{out}}(\tilde{u}_-)=\sum_{i=1}^kS_i\sum_{v\in I({\gamma}_i)}a_v(\gamma_i)\lambda_v.
    \end{equation}
\end{lem}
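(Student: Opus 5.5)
We prove this by capping $\tilde{u}_-$ off with the outer caps and then using that $2c_1$ is supported on $V$ in the sense of Hypothesis \eqref{positivity}. Let $\hat{u}$ denote the closed class in $H_2(M;\mathbb{Z})$ obtained by gluing $\tilde{u}_-$, viewed in $X$ via $M_-\subset X$ and the collar, to the discs $-u_{out}(\tilde{\gamma}_i)$ along the orbits $\tilde{\gamma}_i$. Here we identify the asymptotes with loops on $Y^R$ through $p_f$; this is homotopic to the inclusion, so no topology is lost. Relative Chern numbers are additive under gluing along boundary loops when the trivialisations agree. Hence
\begin{equation}
2c_1(\hat{u})=2c_1^{\tau_{out}}(\tilde{u}_-)-\sum_{i=1}^k 2c_1^{\tilde{\tau}_{out}}(u_{out}(\tilde{\gamma}_i))=2c_1^{\tau_{out}}(\tilde{u}_-),
\end{equation}
and the second equality is condition 3 of Lemma \ref{orbit_properties_estimates}.

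Next we evaluate $2c_1(\hat{u})$ using $\pmb{\lambda}$. By \eqref{positivity}, $2c_1(\hat{u})=\pmb{\lambda}(\hat{u})$, where $\hat{u}$ is regarded in $H_2(M,X)$ through $H_2(M)\to H_2(M,X)$. The piece $\tilde{u}_-$ lies entirely in $X$, so it contributes nothing to a class in $H^2(M,X)$. Concretely, in the relative homology $H_2(M,X)$ the class of $\hat{u}$ equals $-\sum_i [u_{out}(\tilde{\gamma}_i)]$, up to the orientation convention for the punctures. Orientations have to be fixed so that positive punctures of $\tilde{u}_-$ are capped by the outer caps; I will choose the signs so the equality comes out positive, as the statement indicates.

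It remains to compute $\pmb{\lambda}(u_{out}(\tilde{\gamma}_i))=S_i\sum_{v\in I(\gamma_i)}a_v(\gamma_i)\lambda_v$. This is the $\pmb{\lambda}$-analogue of condition 4 of Lemma \ref{orbit_properties_estimates}, and it follows the same way. The disc $u_{out}(\tilde{\gamma}_i)$ is $\Sigma_i\cup u_{out}(\gamma_i)$, and $\Sigma_i\subset Y^R\subset X$, so its class in $H_2(M,X)$ equals that of $u_{out}(\gamma_i)$. By Definition \ref{outer_caps} that class is $\sum_v S_ia_v(\gamma_i)\delta_v$, and by Definition \ref{action_def} we have $\pmb{\lambda}(\delta_v)=\lambda_v$. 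Summing over $i$ gives the formula.

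The main obstacle is bookkeeping. We must check that gluing is legitimate, i.e. that the trivialisation $\tau_{out}$ along each asymptote is the same one used on the cap. We must also handle the sign and orientation of positive punctures, so that capping yields a genuine closed class on which $c_1$ and $\pmb{\lambda}|_{H_2(M)}$ agree. Finally, we need $\pmb{\lambda}$ to vanish on the part of the glued surface inside $X$, which holds because $\pmb{\lambda}$ is a relative class in $H^2(M,X)$.
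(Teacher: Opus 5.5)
Your route is the paper's: glue the outer caps onto the positive ends of $\tilde u_-$, and use condition 3 of Lemma \ref{orbit_properties_estimates} with additivity to get $c_1(\hat u)=c_1^{\tau_{out}}(\tilde u_-)$. Then, since $2c_1$ is the restriction of $\pmb{\lambda}\in H^2(M,X;\mathbb{Q})$, only the caps contribute. Your derivation of $\pmb{\lambda}(u_{out}(\tilde\gamma_i))=S_i\sum_v a_v(\gamma_i)\lambda_v$ directly from Definition \ref{outer_caps} and $\pmb{\lambda}(\delta_v)=\lambda_v$ is cleaner than the paper's appeal to condition 4, which is written with $\pmb{\kappa}$.

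\textbf{The gap: the sign is settled by decree.} With the set-up you wrote down, $\hat u=\tilde u_- -\sum_i u_{out}(\tilde\gamma_i)$ and $\pmb{\lambda}(u_{out}(\tilde\gamma_i))>0$, your own computation gives $2c_1^{\tau_{out}}(\tilde u_-)=-\sum_i S_i\sum_v a_v(\gamma_i)\lambda_v$. ``Choosing the signs so the equality comes out positive'' is not an argument, and the sign is exactly what Lemma \ref{k_estimate} uses.

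\textbf{The missing step is an orientation comparison.} The disc $u$ swept out by $r_v$-orbits with $\omega(u)=r_v\geq 0$ and $\pmb{\lambda}(u)=\lambda_v>0$, as in Lemma \ref{outer_caps_action}, carries the complex orientation of the normal disc to $D_v$. Its boundary loop satisfies $\int_{\partial u}\theta=\omega(u)-\pmb{\kappa}(u)=r_v-\kappa_v<0$ for small $r_v$. A Reeb orbit, by contrast, has $\int_{\tilde\gamma}\theta=\tilde S>0$. So Reeb orbits wind around $D$ negatively relative to the complex orientation.

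Consequently the positively oriented cap has boundary $-\tilde\gamma_i$. The compactified positive end of $\tilde u_-$ has boundary $+\tilde\gamma_i$. The cap therefore glues on without reversal: $\hat u=\tilde u_-+\sum_i u_{out}(\tilde\gamma_i)$, and $\pmb{\lambda}(\hat u)=+\sum_i\pmb{\lambda}(u_{out}(\tilde\gamma_i))$.

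This is what the paper's proof does implicitly when it glues $u_{out}(\tilde\gamma_i)$ directly onto $\tilde u_-$. The phrase ``with boundary $[\gamma]$'' in Definition \ref{outer_caps}, read literally alongside $a_v\geq 0$ and $\lambda_v>0$, points the wrong way, and that is what led you to the reversal. As a check, take $M=\mathbb{CP}^n$, $D$ a hyperplane, and $\tilde u_-$ a complex line in $\mathbb{C}^n$ asymptotic to a Hopf fibre. Capping gives a line, so $2c_1^{\tau_{out}}(\tilde u_-)=2(n+1)=\lambda_v>0$.
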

\begin{proof}
By gluing $u_{out}(\tilde{\gamma}_i)$ to $\tilde{u}_-$ along $\tilde{\gamma}_i$ for each $i$, we obtain a class $\Sigma\in \pi_2(M)$, which satisfies $c_1(\Sigma)=c_1^{\tau_{out}}(\tilde{u}_-)$ by additivity of relative Chern numbers, and the fact that $c_1^{\tau_{out}}(u_{out}(\tilde{\gamma}_i))=0$ for each $i$.

Because $c_1$ is supported on $V$, 
\begin{equation}
2c_1(\Sigma)=\sum_{i=1}^k\pmb{\lambda}(u_{out}(\tilde{\gamma}_i)).
\end{equation}
The result follows from substituting the properties of the caps $u_{out}(\tilde{\gamma}_i)$ from Lemma \ref{orbit_properties_estimates}.
\end{proof}

\begin{lem}
\label{k_estimate}
Suppose $f$ is chosen as in Lemma \ref{orbit_properties_estimates}. Let $u$ be a holomorphic building as in Lemma \ref{neck_stretch}. Let $k$ denote the number of positive punctures of $\tilde{u}_-$.
If $\tilde{\sigma}_{crit}\leq 0$, then
\begin{equation}
    k\geq \lceil 2-\tilde{\sigma}_{crit}N_M\rceil.
\end{equation}
\end{lem}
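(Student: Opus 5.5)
We bound $k$ with the virtual dimension (Fredholm index) formula for $\tilde{u}_-$. By Lemma \ref{neck_stretch}, $\tilde{u}_-$ is Fredholm-regular, genus $0$, and a regular point of the evaluation map. Regularity forces the index to be nonnegative. Since $\tilde{u}_-$ carries the marked point $y\in L$ (and possibly $x$), the index should in fact dominate the codimension of the constraint, at least $2n-2$ after accounting for the marked point. The plan is to write the index in terms of the asymptotic data of Lemma \ref{orbit_properties_estimates}, combine it with Lemma \ref{total_c1}, and turn the resulting inequality into a lower bound on $k$.

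\textbf{Step 1: the index in the outer trivialisation.} In the trivialisation $\tau_{out}$, the index of a genus-$0$ curve in $\hat{X}$ with $k$ positive punctures is
\begin{equation}
\operatorname{ind}(\tilde{u}_-)=(n-3)(2-k)+2c_1^{\tau_{out}}(\tilde{u}_-)+\sum_{i=1}^k\mu_{CZ}^{\tau_{out}}(\tilde{\gamma}_i),
\end{equation}
with a further $+2$ per marked point. Write $T_i=S_i\sum_{v\in I(\gamma_i)}a_v(\gamma_i)$.

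\textbf{Step 2: substituting the estimates.} Lemma \ref{total_c1} gives $2c_1^{\tau_{out}}(\tilde{u}_-)=\sum_i S_i\sum_v a_v\lambda_v$, and Lemma \ref{orbit_properties_estimates}(2) gives $\mu_{CZ}^{\tau_{out}}(\tilde{\gamma}_i)\le -2S_i\sum_v a_v w_v+n-1$. Each puncture $i$ therefore contributes at most
\begin{equation}
S_i\sum_v a_v(\lambda_v-2w_v)+(n-1)-(n-3) = S_i\sum_v a_v\lambda_v\,\frac{\lambda_v-2w_v}{\lambda_v}+2 \le \tilde{\sigma}_{crit}\,S_i\sum_v a_v\lambda_v+2.
\end{equation}
Summing over $i$, and writing $c:=c_1^{\tau_{out}}(\tilde{u}_-)$, we get
\begin{equation}
\operatorname{ind}\le 2(n-3)+2k+2\tilde{\sigma}_{crit}\,c.
\end{equation}

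\textbf{Step 3: integrality of $c$.} As in the proof of Lemma \ref{total_c1}, $c=c_1(\Sigma)$ for a spherical class $\Sigma\in\pi_2(M)$. Condition 4 of Lemma \ref{orbit_properties_estimates} makes $2c$ a positive combination of $\pmb{\lambda}$ values, which should be positive as soon as $k\ge1$. Hence $c>0$, and so $c\ge N_M$. Since $\tilde{\sigma}_{crit}\le0$, this gives $2\tilde{\sigma}_{crit}c\le 2\tilde{\sigma}_{crit}N_M$.

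\textbf{Step 4: combining, and the main obstacle.} Comparing the upper bound of Step 2 with the lower bound from regularity at the marked point ($\operatorname{ind}\ge 2n-2$, or the matching count once marked-point contributions are included) gives $2k\ge 4-2\tilde{\sigma}_{crit}N_M$ up to the correct constant, i.e. $k\ge 2-\tilde{\sigma}_{crit}N_M$. Taking ceilings yields the claim. The main obstacle is this bookkeeping. One must fix the exact index convention (Wendl's formula with marked points), determine which marked points lie on $u_-$ as opposed to $u_+$, and get the correct lower bound from regularity of $ev$ at the constraint $y\in L$. The additive constant has to come out to exactly $2$. A secondary issue is excluding $k=0$, which needs that a closed curve cannot lie entirely in the exact manifold $\hat{X}$.
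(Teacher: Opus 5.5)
Your proposal is correct and follows the paper's proof essentially step for step: the paper writes Wendl's index formula with the single point constraint at $y$ on $\tilde{u}_-$ already built in as a $+2-2n$ term, which is exactly your bound $\operatorname{ind}\geq 2n-2$. It then substitutes Lemma \ref{total_c1}, Property 2, the definition of $\tilde{\sigma}_{crit}$ and $c_1(\Sigma)\geq N_M$ to get $0\leq 2k-4+2\tilde{\sigma}_{crit}N_M$, so your constant does come out to exactly $2$ (the point $x$ lies on $u_+$, not $u_-$), and the case $k=0$ needs no separate treatment because the same inequality would then read $0\leq -4$.
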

\begin{proof}
By the virtual dimension formula  (see Theorem 7.1 of \cite{Wendl}), $\tilde{u}_-$ is an element of a moduli space which has virtual dimension given by 
\begin{equation}
ind(\tilde{u}_-)=(n-3)(2-k)+2c_1^{\tilde{\tau}_{out}}(\tilde{u}_-)+\sum_{i=1}^{k}\mu_{CZ}^{\tilde{\tau}_{out}}(\tilde{\gamma_i})+2-2n.
\end{equation}
Because $\tilde{u}_-$ is Fredholm regular by assumption, the virtual dimension is realised and $ind(\tilde{u}_-)\geq 0$, 
by Lemma \ref{total_c1} and Property 2 from Lemma \ref{orbit_properties_estimates}, we have 
\begin{equation}
0\leq 2k-4+\sum_{i=1}^kS_i\sum_{v\in I(\gamma_i)}a_v(\gamma_i)(\lambda_v-2w_v).
\end{equation}

By definition of $\tilde{\sigma}_{crit}$,
\begin{equation}
0\leq 2k-4+\tilde{\sigma}_{crit}\sum_{i=1}^{k}S_i\sum_{v\in I(\gamma_i)}\lambda_v a_v(\gamma_i).
\end{equation}
By the properties of the caps $u_{out}(\tilde{\gamma}_i)$ from Lemma \ref{orbit_properties_estimates},
\begin{equation}
\label{intermediate_estimate}
0\leq 2k-4+\tilde{\sigma}_{crit}\pmb{\lambda}\left(\sum_{i=1}^ku_{out}(\tilde{\gamma}_i)\right).
\end{equation}
Recall that the class $\Sigma\in \pi_2(M)$ from the proof of Lemma \ref{total_c1} satisfies \begin{equation}
2c_1(\Sigma)=\pmb{\lambda}\left(\sum_{i=1}^ku_{out}(\tilde{\gamma}_i)\right)>0.
\end{equation}
Substituting this inequality into Equation \ref{intermediate_estimate}, and noting that $N_M\leq c_1(\Sigma)$ gives the required result.
\end{proof}

\section{Areas of $J_\infty$-Holomorphic Curves}

\begin{lem}
\label{area_bound_simple}
If $u_-$ has at least $k$ positive punctures, then 
\begin{equation}
\omega(u_+)\leq \omega(A)-(k-1)\kappa_{\min}.
\end{equation}
\end{lem}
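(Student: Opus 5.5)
Write $\gamma_1,\dots,\gamma_k$ for the positive punctures of $u_-$ and $\Gamma_-$ for the union of all other components of the building (symplectisation levels and any further components in $\hat{X}$). The plan is an energy-bookkeeping argument on this decomposition.

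\textbf{Step 1: decomposition of $A$.} Compactify the building $u$ by capping each puncture, so that its total class is $A\in H_2(M;\mathbb{Z})$. We may assume the curve $u_-$, and hence its caps, are those of Lemma \ref{orbit_properties_estimates}. Grouping components, we write
\begin{equation}
A=[u_+\cup \Gamma_+]+[u_-\cup\text{caps}],
\end{equation}
where $\Gamma_+$ denotes the part of the building lying above each asymptote of $u_-$. It then suffices to show that everything other than $u_+$ carries $\omega$-area at least $(k-1)\kappa_{\min}$, up to errors that vanish as $R\to 0$ and $f\to 0$. Since $\omega$ takes the same value on $A$ computed with any representative, we will compute areas class-by-class rather than curve-by-curve.

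\textbf{Step 2: area of what lies above $u_-$.} For each positive asymptote $\tilde\gamma_i$ of $u_-$, consider the connected piece $B_i$ of the building above $u_-$ that is attached to $\tilde\gamma_i$, together with the component of $u$ it ultimately reaches. Because the building has genus $0$ and total class $A$, at most one of the $B_i$ can contain $u_+$: the marked point $x$ lies in $u_+$ and the graph of components is a tree. The remaining $k-1$ pieces $B_i$ lie entirely in $M\setminus L$ minus the region $\{\rho>\sigma\}$, i.e.\ in the symplectisation levels and $\hat X$. Each such $B_i$ glued to $u_-$ along $\tilde\gamma_i$ closes up into a class $\Sigma_i\in\pi_2(M,X)$, namely $B_i$ capped by its own negative ends, whose outer caps lie near $D$. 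We will then use $\omega$-energy positivity, $\omega=\pmb{\kappa}$ up to the $\theta$-boundary terms, and the cap estimates of Lemma \ref{orbit_properties_estimates}, properties 4 and 5, to get
\begin{equation}
\omega(\Sigma_i)\ge \pmb{\kappa}(\Sigma_i)-C'\epsilon(R)\pmb{\kappa}(\Sigma_i)\ge \kappa_{\min}(1-C'\epsilon(R)).
\end{equation}
Positivity of $\pmb{\kappa}(\Sigma_i)$ comes from writing $\pmb{\kappa}(\Sigma_i)=\int\omega-\int_\partial\theta$, which is the $d\theta$-energy of the piece and is positive for a nonconstant holomorphic piece. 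Combined with the definition of $\kappa_{\min}$, this gives $\pmb{\kappa}(\Sigma_i)\ge\kappa_{\min}$.

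\textbf{Step 3: subtracting.} Then
\begin{equation}
\omega(u_+)=\omega(A)-\sum_{i\neq i_0}\omega(\Sigma_i)-(\text{nonnegative energy of }u_-\text{ and the piece through }u_+)\le\omega(A)-(k-1)\kappa_{\min}+O(\epsilon(R)).
\end{equation}
Letting $R\to0$ and the perturbation $f\to0$ gives the stated bound, since the building $u$ exists for every small $R$ and the bound on $\omega(u_+)$ is what is used downstream. Alternatively, because $\pmb{\kappa}$ takes values in a discrete set, the bound can be made exact for small $R$.

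\textbf{Main obstacle.} The hardest step is the topological bookkeeping in Step 2. We must check that the $k-1$ branches not containing $u_+$ really close up to genuine relative classes with positive $\pmb{\kappa}$, rather than being cancelled by the caps (an $\omega$ versus $d\hat\theta$ energy comparison). We must also check that the $\sigma$-rescaling of the periods does not spoil the estimate. My first attempt would be to compute the area of each branch directly as the $\hat\theta$-action of the asymptote $\sigma\tilde S_i$ plus the area of the corresponding outer cap, and to use $\tilde S_i\approx S_i$ together with property 4 to identify the sum with $\pmb{\kappa}$ of an integral class.
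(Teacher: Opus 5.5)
Your Step 2 has a genuine gap, and it is exactly where the content of the lemma lies. You assert that the $k-1$ branches $B_i$ not containing $u_+$ lie entirely in the symplectisation levels and $\hat X$. You then justify $\pmb{\kappa}(\Sigma_i)>0$ by calling $\int\omega-\int_\partial\theta$ the $d\theta$-energy. Both claims are false, and together they would force $\pmb{\kappa}(\Sigma_i)=0$. Those levels lie in the exact region $X$, and on any chain in $X$ one has $\int\omega-\int_\partial\theta=\int d\theta-\int_\partial\theta=0$; equivalently, $\pmb{\kappa}\in H^2(M,X)$ kills it. The $d\theta$-energy is $\int\omega$, not the difference. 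Not containing $u_+$ does not mean not reaching the top level.

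The missing idea is the opposite statement. A nonconstant holomorphic curve in $\mathbb{R}\times V$ or in $\hat X$ must have a positive puncture (Stokes plus exactness). So, following positive punctures upward, each branch $B_i$ (a disc whose only free end is the negative end at $\tilde\gamma_i$) must terminate in at least one component of the top level $M\setminus L$. By genus $0$ the branches are disjoint, so there are at least $k$ top-level components, one of which is $u_+$. Positivity comes from these components. Truncate the negative ends of a top-level component $u_j$ at $t=-T$; the $\theta$-boundary term is $e^{-T}$ times a sum of periods and tends to $0$. Hence $\omega(u_j)=\pmb{\kappa}([u_j])$, which is positive by compatibility of $J_\infty$ and therefore at least $\kappa_{\min}$. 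Moreover $\omega(A)\geq\sum_j\omega(u_j)$, since the lower levels lie in $X$ and contribute nothing to $\pmb{\kappa}(A)=\omega(A)$. This is the paper's argument, and it is exact for the given building.

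Once this is fixed, your grouping by branches does work. Since each $B_i$ is a disc, its class lies in $\pi_2(M,X)$, literally matching the definition of $\kappa_{\min}$. However, the outer caps and $\epsilon(R)$ errors are superfluous, and the computation you propose for them would fail. The area of a branch is not determined by its asymptote: two branches with the same asymptote can differ by a class in $\pi_2(M)$. Also, with the small caps of Lemma \ref{orbit_properties_estimates} one gets $\sigma\tilde S_i+\omega(u_{out}(\tilde\gamma_i))\approx\sigma\,\pmb{\kappa}(u_{out}(\tilde\gamma_i))$. This is because $\pmb{\kappa}$ of the cap is governed by the action $\tilde S_i$ on $Y^R$, not by the action $\sigma\tilde S_i$ at the splitting hypersurface, so this route loses a factor of $\sigma$. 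Finally, letting $R\to 0$ replaces $u$ by a different building, so it does not prove the inequality for the given $u$. Your discreteness remark restores exactness only once you know that $\omega(u_+)$ is itself a value of $\pmb{\kappa}$, which is precisely the observation above.
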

\begin{proof}
By genus considerations, the building $u$ has at least $k$ connected components in $M\setminus L$, including $u_+$, which we denote by $u_+=u_0,u_1,...,u_{k-1},...$. For each $i$, since each component has positive area, and $\omega$ is supported on $V$, $\omega(u_i)\geq \kappa_{\min}$. The result follows from the fact that $\omega(A)\geq \sum_{i=0}^{k-1}\omega(u_i)$.
\end{proof}
\begin{lem}
If $u_-$ has at least $k$ positive punctures, then
\begin{equation}
\omega(u_+\cap \{\rho^0\geq \sigma\})\leq \omega(A)-2\sigma N_M-(1-\sigma)(k-1)\kappa_{\min}+C''\epsilon(R)
\end{equation}
where $C''>0$ is a constant depending only on $M$, $\kappa_{\min}$ and $\sigma$.
\label{area_bound_advanced}
\end{lem}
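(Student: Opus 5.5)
The plan is to split the area of $u_+\cap\{\rho^0\geq\sigma\}$ into the total area of the top-level components minus the contributions of the collar $\{\sigma\leq\rho^0\leq 1\}$ swept out by curves asymptotic to the orbits $\tilde\gamma_i$. As in Lemma \ref{area_bound_simple}, the building has components $u_0=u_+,u_1,\dots,u_{k-1}$ in $M\setminus L$, one for each positive puncture of $u_-$ (by genus zero), each capping off an asymptote. I will use the identification $\{0<\rho^R\leq 1\}\cong((0,1]\times Y^R,d(\rho\alpha))$ and the perturbed hypersurface $Y^R_{\log\sigma+f}$, which sits at level $\rho\approx\sigma$ up to an error controlled by $\|f\|$ and by $|\rho^R-\rho^0|$, itself $O(\epsilon(R))$.

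The first step bounds each top-level component $u_j$ for $j\geq 1$ from below. Its area equals $\sigma\tilde S$-type terms plus the area above the neck. Concretely, $\omega(u_j)=\pmb\kappa$-class computations give $\omega(u_j)=\pmb{\kappa}([u_j]\#\text{caps})-\sum\omega(\text{caps})$. Using Lemma \ref{orbit_properties_estimates} (conditions 4 and 5), gluing the outer caps $u_{out}(\tilde\gamma_i)$ onto $u_j$ gives a closed class of $\pmb\kappa$-value at least $\kappa_{\min}$. The portion of $u_j$ lying in the region $\{\rho^0<\sigma\}$ has area at most $\sigma$ times that of the corresponding full region, by the Liouville scaling $d(\rho\alpha)$, i.e. $\sigma\tilde S_i$. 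The portion in $\{\rho^0\geq\sigma\}$ therefore has area at least $(1-\sigma)\kappa_{\min}-C\epsilon(R)$. Summing over $j=1,\dots,k-1$ and subtracting from $\omega(A)$ handles those components in the region $\{\rho^0\geq\sigma\}$.

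The second step accounts for the bottom-level contribution and produces the $2\sigma N_M$ term. The curve $u_-$ (lying below $Y^R_{\log\sigma+f}$) and the symplectisation levels have $\omega$-energy equal to $\sigma\sum\tilde S_i$ up to the error. By condition 4, $\sum S_i\sum_v a_v\lambda_v=\pmb\lambda(\sum u_{out})=2c_1(\Sigma)\geq 2N_M$ by Lemma \ref{total_c1}. I then want to compare $\sum_i S_i$ with $\pmb\lambda$-weighted sums. Here I would use the normalisation implicit in the setup, that the Reeb periods measure $\pmb\kappa$ of the outer caps, i.e. $\tilde S_i\approx\pmb\kappa(u_{out}(\tilde\gamma_i))$, together with a comparison between $\pmb\kappa$ and $\pmb\lambda$ on caps. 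This gives that the area of the building below level $\sigma$ is at least $2\sigma N_M-C\epsilon(R)$. Adding the two lower bounds and subtracting from $\omega(A)\geq\sum(\text{areas})$ yields the claim, with all the $\epsilon$-errors (cap areas from condition 5, period perturbations $|\tilde S_i-S_i|$, and $\rho^R$ versus $\rho^0$) absorbed into $C''\epsilon(R)$, where $C''$ depends on $\kappa_{\min}$, $\sigma$ and the bound $\sum\tilde S_i\leq\sigma^{-1}\omega(A)$.

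The main obstacle is this second step: converting $2c_1(\Sigma)\geq 2N_M$ into a lower bound on the sub-$\sigma$ area. This needs a relation between $\pmb\kappa$ and $\pmb\lambda$ on the caps, which I do not expect to be an identity. I would first try to use $\tilde\sigma_{crit}\leq 0$, equivalently $\lambda_v\leq$ something relative to $w_v$, combined with the index inequality of Lemma \ref{k_estimate}, to bound $\sum S_i$ from below. Failing that, I would restrict to the proportional case $\pmb\lambda=c\pmb\kappa$ and absorb the constant.
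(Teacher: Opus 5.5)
There is a genuine gap in your first step: the inequality points the wrong way. Since $J_\infty$ is cylindrical near $V=Y^R_{\log\sigma+f}$ and $d\beta\geq 0$ on $J_\infty$-holomorphic curves there, Stokes shows that the part of a top-level component below $V$ has area $\int_{u\cap V}\beta\geq\sigma\sum_j\tilde S_j$. This is a \emph{lower} bound in terms of the asymptotic periods, not an upper bound. An upper bound of the form ``$\sigma$ times the area below level $1$'' needs $s\mapsto\int_{u\cap\{\rho=s\}}\alpha$ to be monotone on all of $[\sigma,1]$, i.e.\ $J$ cylindrical on the whole collar. You cannot arrange this, because $J_+$ must be standard on $\iota(B_r)\subset\{\rho^0>\sigma\}$. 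Even then, the area below level $1$ is at least $\tilde S$, not at most. So the estimate $\omega(u_j\cap\{\rho^0\geq\sigma\})\geq(1-\sigma)\kappa_{\min}$ for $j\geq 1$ is unavailable.

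The paper never estimates the upper parts of $u_1,\dots,u_{k-1}$. It applies the correct Stokes bound only to $u_+$, giving $\omega(u_+\cap\{\rho^0\geq\sigma\})\leq\omega(u_+)-\sigma\sum_j\tilde S_j$, where the sum runs over the asymptotes of $u_+$. It then caps off \emph{everything except} $u_+$ (the other top-level components together with all lower levels) by the outer caps $u_{out}(\tilde\gamma_j)$. This produces $\Sigma\in\pi_2(M)$ with $\omega(\Sigma)=\sum_{i\geq 1}\omega(u_i)+\sum_j\bigl(\tilde S_j+\omega(u_{out}(\tilde\gamma_j))\bigr)$. Eliminate $\sum_j\tilde S_j$, use condition 5 of Lemma \ref{orbit_properties_estimates}, and use $\sum_{i\geq1}\omega(u_i)\leq\omega(A)-\omega(u_+)$. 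This gives $\sigma\omega(A)+(1-\sigma)\omega(u_+)-\sigma\omega(\Sigma)$ up to $O(\epsilon(R))$. The term $(1-\sigma)(k-1)\kappa_{\min}$ then comes from this convex combination together with Lemma \ref{area_bound_simple}, not from a per-component estimate.

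Your second step is left open, and the routes you suggest would not close it. The condition $\tilde\sigma_{crit}\leq 0$ and the index inequality of Lemma \ref{k_estimate} relate $\lambda_v$ to $w_v$, never $\kappa_v$ to $\lambda_v$. Absorbing a constant when $\pmb{\lambda}=c\pmb{\kappa}$ gives $2c\sigma N_M$, not $2\sigma N_M$.

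The paper's input here is simply $\omega(\Sigma)\geq 2N_M$, i.e.\ $[\omega]=2c_1$ on $\pi_2(M)$, so that $\omega(\Sigma)>0$ forces $c_1(\Sigma)\geq N_M$. This is the normalisation $\omega(A)=2c_1(A)$ used in the proof of Theorem \ref{T:2}. Some such normalisation is implicit in the statement: replacing $\omega$ by $c\,\omega$ scales the left-hand side, $\omega(A)$ and $\kappa_{\min}$ by $c$, but leaves $2\sigma N_M$ fixed.

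It must also be applied to the class built around the asymptotes of $u_+$, which absorbs $u_1,\dots,u_{k-1}$. You propose instead to bound the total area below $V$ via the class formed from $\tilde u_-$ and its caps in Lemma \ref{total_c1}. That yields the $2\sigma N_M$ term, but it leaves no way to recover the $\kappa_{\min}$ term without the per-component bound from your first step, which fails.
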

\begin{proof}
Let $\tilde{\gamma}_j$ for $j=1,...,\ell$ denote the asymptotes of $u_+$, and $\tilde{S}_j$ the period of $\tilde{\gamma}_j$ (viewed as a Reeb orbit in $(Y^R,e^f\alpha)$) for each $j$.

Recall that the hypersurface $V$ partitions $M\setminus L$ into a region containing $\{\rho^R\geq \sigma\}$, and a region symplectomorphic to $(V\times (\mathbb{R}_{\geq 0})_t,d(e^t\beta))$.
Since $J_\infty$ is of contact type in this region we can equip $V\times \mathbb{R}_{\geq 0}$ with metric $g$ such that $(d(e^t\beta),J_\infty,g)$ is a compatible triple, and for any $\nu\in TV$, $d\beta(\nu,J_\infty\nu)=||\nu^{\parallel}||^2\geq 0$, where $\nu^{\parallel}$ is the orthogonal projection onto $\ker\beta$. This implies that the integral of $d\beta$ over any subset of $u_0$ is non-negative (i.e. the same reason that $\omega$-energy is positive), so
    \begin{equation}
        \int_{u_+\cap V}\beta-\sigma\sum_{j=1}^\ell\tilde{S}_j=\int_{u_+\cap V\times \mathbb{R}_{\geq 0}}d\beta\geq 0.
    \end{equation}
Furthermore,
\begin{equation}
    \omega(u_+\cap \{\rho^0\geq \sigma\})\leq \omega(u_+)-\int_{u_+\cap V}\beta\leq \omega(u_+)-\sigma\sum_{j=1}^\ell\tilde{S}_j.
    \label{symp_monotonicity}
\end{equation}

Topologically, we obtain a class $\Sigma\in\pi_2(M)$ by cutting out $u_+$ from $\overline{u}$, and gluing $u_{out}(\tilde{\gamma})$ along $\tilde{\gamma}$ for every puncture of $u_+$ asymptotic to a Reeb orbit $\tilde{\gamma}$. The area of $\Sigma$ is given by
\begin{equation}
\omega(\Sigma)=\sum_{i=1}^{k-1}\omega(u_i)+\sum_{j=1}^\ell(\omega(u_{out}(\tilde{\gamma}_j))+\tilde{S}_j).
\end{equation}

Substituting this into Equation \eqref{symp_monotonicity},
\begin{equation}
\omega(u_+\cap \{\rho^0\geq \sigma\})\leq \omega(u_+)-\sigma\left(\omega(\Sigma)-\sum_{i=1}^{k-1}\omega(u_i)-\sum_{j=1}^\ell\omega(u_{out}(\tilde{\gamma}_j))\right).   
\end{equation}

By the area bounds from Lemma \ref{orbit_properties_estimates},
\begin{equation}
\omega(u_+\cap \{\rho^0\geq \sigma\})\leq \omega(u_+)-\sigma(1-C'\epsilon(R))\left(\omega(\Sigma)-\sum_{i=1}^{k-1}\omega(u_i)\right).  
\end{equation}
Now, noting that $\omega(\Sigma)\geq 2N_M$,
\begin{equation}
\omega(u_+\cap \{\rho^0\geq \sigma\})\leq \sigma\omega(A)+(1-\sigma)\omega(u_+)-2N_M\sigma(1-C'\epsilon(R)). 
\end{equation}
Now let $C''=2N_M\sigma C'$. The result follows from Lemma \ref{area_bound_simple}.
\end{proof}

\subsection{Proof of Main Theorems}
We now collect all of our previous results into a single theorem.
\begin{thm}
\label{T:1}
\label{main_prop}
Suppose $A\in H_2(M)$ satisfies Hypothesis \ref{rat_con}.

If $\tilde{\sigma}_{crit}\leq 0$, then
\begin{equation}
W_G(\{\rho^0>\sigma\})\leq \omega(A)-2\sigma N_M -(1-\sigma)\lceil1-\tilde{\sigma}_{crit}N_M\rceil\lambda_{\min}.
\end{equation}
\end{thm}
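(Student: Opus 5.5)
The plan is to argue by contradiction via the neck-stretching lemma, then combine the puncture count with the area estimate. (The statement's $\lambda_{\min}$ is read as $\kappa_{\min}$, as in the introduction's version of Theorem~\ref{T:1}.)

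Suppose $\iota:B_r\rightarrow\{\rho^0>\sigma\}$ is a symplectic embedding. Since $B_r$ is open, we may shrink $r$ slightly so that $\iota(\overline{B_r})$ is compact. For $R$ small, $\rho^R\to\rho^0$, so $\iota(B_r)\subset\{\rho^R>\sigma'\}$ for some $\sigma'$ slightly less than $\sigma$. We then apply the neck-stretching argument with this hypersurface. The area estimates of Lemma~\ref{area_bound_advanced} are phrased with $\rho^0$; we will either work with $\rho^R$ throughout and absorb the discrepancy into an $o(1)$ term as $R\to 0$, or note that the region $\{\rho\geq\sigma\}$ relevant in that lemma contains $\iota(B_r)$.

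First, we fix $f$ as in Lemma~\ref{orbit_properties_estimates}, with $\|f\|_{C^\infty}$ small and all Reeb orbits of $(Y^R,e^f\alpha)$ non-degenerate. Lemma~\ref{neck_stretch}, applied with $x=\iota(0)$, then gives a genus-$0$ building $u$ in class $A$. Its top component $u_+$ passes through $\iota(0)$, and $\iota^{-1}(u_+)$ is a proper holomorphic curve in $B_r$ through the origin. Its bottom component $u_-$ covers a Fredholm-regular curve $\tilde u_-$.

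Next, we bound the number of punctures. Since $\tilde\sigma_{crit}\leq 0$, Lemma~\ref{k_estimate} shows that $\tilde u_-$ has at least $\lceil 2-\tilde\sigma_{crit}N_M\rceil$ positive punctures. A multiple cover has at least as many punctures as the curve it covers, so $u_-$ has at least $k=\lceil 2-\tilde\sigma_{crit}N_M\rceil$ positive punctures. Hence $k-1=\lceil 1-\tilde\sigma_{crit}N_M\rceil$.

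The lower bound on area comes from the monotonicity lemma for minimal surfaces in $\mathbb{C}^n$ with the standard structure $J_+=\iota_*i$. Because $\iota^{-1}(u_+)$ passes through $0$ and is properly embedded in $B_r$, we get $\pi r^2\leq \omega(\iota^{-1}(u_+))$. Since $\iota(B_r)\subset\{\rho^0\geq\sigma\}$, this is at most $\omega(u_+\cap\{\rho^0\geq\sigma\})$.

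Lemma~\ref{area_bound_advanced} then gives
\begin{equation*}
\pi r^2\leq \omega(A)-2\sigma N_M-(1-\sigma)\lceil 1-\tilde\sigma_{crit}N_M\rceil\kappa_{\min}+C''\epsilon(R).
\end{equation*}
We let $R\to 0$, using $\epsilon(R)\to 0$ and the fact that $C''$ depends only on $M$, $\kappa_{\min}$ and $\sigma$. Then we let $r$ increase to the original radius and take the supremum over embeddings, which gives the claimed bound on $W_G(\{\rho^0>\sigma\})$.

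The main obstacle is the limiting step. For fixed $R$, the splitting happens along $Y^R_{\log\sigma+f}$, which is defined via $\rho^R$, while the statement is about $\rho^0$. We need $\iota(B_r)$ to lie in the top level $M_+$ for every small $R$, uniformly, and the constants of Lemmas~\ref{orbit_properties_estimates} and~\ref{area_bound_advanced} must not blow up as $R\to 0$. To handle this, we first use compactness of $\iota(\overline{B_{r'}})$ with $r'<r$ and the convergence $\rho^R\to\rho^0$ to get $\iota(B_{r'})\subset\{\rho^R>\sigma\}$ for small $R$. If that convergence is not uniform enough, we replace $\sigma$ by $\sigma-\eta$ and use continuity of the bound in $\sigma$ to let $\eta\to0$.
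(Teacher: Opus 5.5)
Your proposal is correct and follows essentially the same route as the paper: fix a small non-degenerate perturbation $f$, neck-stretch via Lemma \ref{neck_stretch}, get at least $\lceil 2-\tilde\sigma_{crit}N_M\rceil$ positive punctures from Lemma \ref{k_estimate}, bound $\pi r^2$ below by the Lelong (monotonicity) inequality and above by Lemma \ref{area_bound_advanced}, then let $R\to 0$ and increase $r$. Your extra remarks only make explicit what the paper's short proof leaves implicit: reading $\lambda_{\min}$ as $\kappa_{\min}$, passing the puncture count from $\tilde u_-$ to its cover $u_-$, and checking that $\iota(B_r)$ lies in the top level for $\rho^R$ rather than $\rho^0$.
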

\begin{proof}
Let $r<\sqrt{\pi^{-1}W_G(\{\rho^0\geq \sigma\})}$. There exists a symplectic embedding $\iota:B_r\rightarrow \{\rho^0> \sigma\}$. By the neck-stretching argument described in Lemma \ref{neck_stretch}, we obtain a $J_\infty$-holomorphic curve $u_+$ in $M\setminus L$, such that $\iota^{-1}(u_+)$ is a holomorphic subvariety of $B_r$. By the Lelong inequality,
\begin{equation}
    1\leq \frac{1}{\pi r^2}\int_{\iota^{-1}(u_+)}\omega_{st}
\end{equation}
where $\omega_{st}$ denotes the standard symplectic form on $\mathbb{C}^n$. By positivity of area, $\omega(u_+\cap \{\rho^0\geq \sigma\})\geq \omega(u_+\cap \iota(B_r))\geq \pi r^2$. The result follows from combining this inequality with Lemma \ref{k_estimate} and Lemma \ref{area_bound_advanced} (where we can ignore the error term by letting $R\to 0$), then letting $r\to \sqrt{\pi^{-1}W_G(\{\rho^0\geq \sigma\})}$.
\end{proof}

Now we specialise to the case of projective space.
\begin{thm}
\label{T:2}
\label{proj_skel}
    Take $M=\mathbb{CP}^n$, and $\omega$ the Fubini-Study form , normalised so that $\omega(\mathbb{CP}^1)=1$. If $\tilde{\sigma}_{crit}\leq 0$, then $L\subset \mathbb{CP}^n$ is a Lagrangian barrier. Moreover,
    \begin{equation}
        W_G(\mathbb{CP}^n\setminus L)\leq 1-\lceil 1-\tilde{\sigma}_{crit}(n+1)\rceil \kappa_{\min}.
    \end{equation}
\end{thm}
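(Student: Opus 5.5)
We will deduce Theorem \ref{proj_skel} from Theorem \ref{main_prop} by taking $\sigma\to 0$ and exhibiting a class $A\in H_2(\mathbb{CP}^n)$ satisfying Hypothesis \ref{rat_con}. Take $A=[\mathbb{CP}^1]$, the line class, so $\omega(A)=1$ with the stated normalisation. For any $\omega$-compatible $J$ and any two distinct points $x,y$ there is a $J$-holomorphic sphere in class $A$ through $x$ and $y$. This follows from the classical fact that the two-point genus-$0$ Gromov--Witten invariant of lines in $\mathbb{CP}^n$ equals $1$. Since $A$ is the minimal positive-area class, no bubbling can occur, so the moduli space is compact for every $J$ and the invariant gives existence for all $J$, not just generic ones. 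If $x=y$ we pass to a limit of pairs $x_k\to x$, again using compactness. We also record that $N_M=n+1$, since $c_1(T\mathbb{CP}^n)=(n+1)[\omega]$ and $H_2$ is generated by $A$.

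Next we relate $\mathbb{CP}^n\setminus L$ to the sets $\{\rho^0>\sigma\}$. By construction $\rho^0$ is continuous, equals $1$ on $D$, scales by $e^t$ along the Liouville flow, and vanishes exactly on the skeleton $L$. Hence
\begin{equation}
\mathbb{CP}^n\setminus L=\bigcup_{\sigma>0}\{\rho^0>\sigma\},
\end{equation}
an increasing union of open sets. Any ball $\iota(\overline{B_r})$ in $\mathbb{CP}^n\setminus L$ has compact image. It therefore lies in $\{\rho^0>\sigma\}$ for some $\sigma>0$, because $\rho^0$ attains a positive minimum on it. Consequently
\begin{equation}
W_G(\mathbb{CP}^n\setminus L)\leq \liminf_{\sigma\to0}W_G(\{\rho^0>\sigma\}),
\end{equation}
where we use balls of slightly smaller radius to pass from open to closed balls.

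Applying Theorem \ref{main_prop} with $A$ as above, for each $\sigma\in(0,1)$ we get
\begin{equation}
W_G(\{\rho^0>\sigma\})\leq 1-2\sigma(n+1)-(1-\sigma)\lceil 1-\tilde{\sigma}_{crit}(n+1)\rceil\kappa_{\min}.
\end{equation}
The right-hand side is continuous in $\sigma$. Letting $\sigma\to0$ gives the claimed bound
\begin{equation}
W_G(\mathbb{CP}^n\setminus L)\leq 1-\lceil 1-\tilde{\sigma}_{crit}(n+1)\rceil\kappa_{\min}.
\end{equation}

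For the barrier statement, note that $W_G(\mathbb{CP}^n)=1$ in this normalisation. Since $\tilde{\sigma}_{crit}\leq 0$, we have $\lceil 1-\tilde{\sigma}_{crit}(n+1)\rceil\geq 1$. It remains to check that $\kappa_{\min}>0$, i.e. that the minimum in \eqref{lambda_min} is positive and attained. This holds because $\pmb{\kappa}$ takes values in a finitely generated subgroup of $\mathbb{Q}$: it is determined by the rational coefficients of the components of $D$ paired with the integral classes in $H_2(M,X;\mathbb{Z})$. A positive minimum therefore exists. The bound is then strictly less than $1$, so $L$ is a barrier.

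The main obstacle I expect is the justification of Hypothesis \ref{rat_con} for every (not just generic) compatible $J$, including the case $x=y$. I would handle it via the line class being of minimal energy, which gives compactness of the moduli space without bubbling, together with the nonvanishing Gromov--Witten invariant.
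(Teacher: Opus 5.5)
Your proposal is correct and follows the paper's proof, which takes $A$ to be the line class and applies Theorem \ref{main_prop}. You also supply details the paper leaves implicit: Hypothesis \ref{rat_con} for every compatible $J$, the exhaustion $\mathbb{CP}^n\setminus L=\bigcup_{\sigma>0}\{\rho^0>\sigma\}$ with $\sigma\to 0$, and $\kappa_{\min}>0$ for the barrier claim. The one difference is the normalisation: the paper applies Theorem \ref{main_prop} with $\omega(A)=2c_1(A)=2(n+1)$, where the $2\sigma N_M$ term is correctly scaled, and then rescales. You instead plug in $\omega(A)=1$ directly, which mis-scales that $\sigma$-term, but it vanishes as $\sigma\to 0$, so your conclusion is unaffected.
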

\begin{proof}
We may take $A$ to be the class of a line. Note that $W_G(\mathbb{CP}^n)=\omega(\mathbb{CP}^1)=2(n+1)$ with respect to our normalisation convention, so $\omega(A)=2c_1(A)=2N_{\mathbb{CP}^n}=2(n+1)$, and apply Theorem \ref{main_prop}.
\end{proof}

\section{Ball embeddings from pairs of orthogonal curves}
\label{lower_bounds}
\begin{lem}
    Suppose $n=2$, and for some $u\in V$, $\{u\in V| u<v\}=\{v_1,v_2\}$ for some $v_1\neq v_2$.
    
    Suppose $\mathring{D}_{u}=\{p\}$, and for $i=1,2$, $D_i=\mathring{D}_{v_i}\cup \{p\}$ is a $2$-dimensional symplectic submanifold. Suppose further that for $i=1,2$, $r_{v_i}$ extends over a neighbourhood of $p$, in which $r_u=r_{v_1}+r_{v_2}$ and $\{r_{v_1},r_{v_2}\}=0$.

    In other words, $D_1\cup D_2$ is an orthogonal symplectic crossings divisor, and the radial Hamiltonians are constructed in a standard way. Equivalently, near the point $p$, $D_1\cup D_2$ is symplectically modelled on $0\in\{z_1z_2=0\}\subset \mathbb{C}_z^2$, and the radial Hamiltonians generate the standard torus action.    
    Suppose
    \begin{equation}
        \pi r^2<\min\{\omega(D_1),\omega(D_2),\kappa_1,\kappa_2\}.
    \end{equation}
    Then there exists a symplectic embedding $\iota:B_r\rightarrow M\setminus L$.
\end{lem}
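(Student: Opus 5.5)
We will construct the ball explicitly near the crossing point $p$ of the orthogonal divisor $D_1\cup D_2$, using the toric structure generated by the commuting radial Hamiltonians $r_{v_1},r_{v_2}$. The idea is that the complement of the skeleton $M\setminus L$ contains a neighbourhood of $D_1\cup D_2$ swept out by the backward Liouville flow, and this neighbourhood should look like a toric domain whose moment image contains a triangle with legs of lengths $\kappa_1$ and $\kappa_2$ in the $(r_{v_1},r_{v_2})$-directions. Its extent along $D_i$ is controlled by $\omega(D_i)$. A standard ball $B_r$ is the toric domain over the simplex $\{x_1,x_2\geq 0,\ x_1+x_2<\pi r^2\}$, so it suffices to embed that simplex, equivariantly, into the image of $M\setminus L$ under a suitable moment-type map.

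\textbf{Step 1: toric model near $D_1\cup D_2$.} Because $\theta$ is adapted, it is invariant under the $T^2$-action near $p$, and under the $S^1$-action of $r_{v_i}$ near $\mathring D_{v_i}$. The Liouville vector field $Z$ therefore commutes with these actions. Using $[\omega,\theta]=\pmb\kappa$ and the action computation of Definition \ref{action_def}, a flowline of $-Z$ starting at a point of $X$ near $\mathring D_{v_i}$ should reach $\mathring D_{v_i}$. Along the way, the integral of $\theta$ over the $r_{v_i}$-orbit tends to $\kappa_i-r_{v_i}$, so the $r_{v_i}$-orbit is a boundary of a disc of area $r_{v_i}$. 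Hence all points with $r_{v_i}<\kappa_i$ (where the orbit integral of $\theta$ stays positive) flow into $D_i$ rather than into $L$. I would make this precise as follows: a $T^2$-orbit $O$ near $p$ with moment values $(x_1,x_2)$ lies off $L$ as long as the $\theta$-periods $\kappa_1-x_1$ and $\kappa_2-x_2$ do not both stay bounded away from forcing the orbit to the skeleton. Concretely, $L$ is disjoint from the region where one can flow by $-Z$ to the divisor. I expect to use the projection $\rho^0$ and the fact that $\rho^0=1$ on $|V|$, together with the observation that $\theta$ restricted to a torus orbit is determined cohomologically.

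\textbf{Step 2: extending along the curves $D_i$.} The model of Step 1 is only local near $p$. To get a ball of capacity up to $\min\{\omega(D_1),\omega(D_2)\}$, I would use the symplectic normal bundle of $D_i$ with the $r_{v_i}$-action, and take a symplectic embedding of a disc of area $<\omega(D_i)$ in $D_i$, centred at $p$, whose complement is a small disc. Over this disc, the $r_{v_i}$-action gives the fibre direction. Combining the two directions with the local $T^2$ model at $p$, I would build an open subset of $M\setminus L$ symplectomorphic to a concave or convex toric domain in $\mathbb{C}^2$ with moment image containing
\begin{equation}
\{(x_1,x_2): x_1,x_2\geq 0,\ x_1+x_2<\min\{\omega(D_1),\omega(D_2),\kappa_1,\kappa_2\}\}.
\end{equation}
Here $x_1$ measures area along $D_2$ (the $r_{v_1}$ direction), and similarly for $x_2$. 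The ball $B_r$ then embeds by the Traynor-type toric embedding.

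\textbf{Main obstacle.} The hard step is Step 2: the Hamiltonians $r_{v_i}$ are only defined near $\mathring D_{v_i}$, not on a disc of area nearly $\omega(D_i)$, so there is no global $T^2$-action. I would first try to avoid needing one by a symplectic-inflation or Liouville-flow argument. This would replace the local model by the image under the backward flow of $Z$, which is $T^2$-equivariant near $p$, and which by $d\rho^0(Z)=\rho^0$ enlarges the $\rho^0$-neighbourhood of $D_1\cup D_2$ to all of $M\setminus L$. One then checks via areas (Lemma \ref{outer_caps_action}-style action computations) that the toric region over the simplex maps into $\{\rho^0>0\}=M\setminus L$.
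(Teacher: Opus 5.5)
Your outline has the paper's overall architecture: an equivariant model on discs $C_i\subset D_i$ of area $\pi r^2<\omega(D_i)$, then the Liouville flow, then a torus fibration over the simplex. There is a genuine gap, however. The step that actually produces the ball is left open, and the fix you propose in the final paragraph would not work.

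\textbf{Why flowing the local model at $p$ fails.} You already have the key ingredient: the $\theta$-period of an $r_{v_i}$-orbit is $\kappa_i-r_{v_i}$. Wherever $\theta$ is invariant under a torus action with moment map $(r_{v_1},r_{v_2})$, this says exactly $dr_{v_i}(Z)=\kappa_{v_i}-r_{v_i}$, up to sign convention. So in moment coordinates $Z$ is the radial field centred at $(\kappa_1,\kappa_2)$. Since $Z$ commutes with the action, the action and moment map can be transported along flowlines. Flowing only the local $T^2$-model at $p$ therefore yields a thin cone from a small neighbourhood of the origin towards $(\kappa_1,\kappa_2)$, not the simplex. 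Your Step 1 criterion for an orbit to lie off $L$ never draws this conclusion, and this is precisely where the hypothesis on $\kappa_i$ enters. Likewise, in Step 2 a neighbourhood of the two discs only has moment image near the two legs, so it cannot by itself be a toric domain over the whole simplex.

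\textbf{What the paper does instead.} The torus action is built on a neighbourhood of all of $C_1\cup C_2$, not just near $p$, in these steps:
\begin{enumerate}
\item Extend $\iota|_{\{z_i=0\}\cap B_\epsilon}$ to a symplectomorphism $\{z_i=0\}\cap B_r\to C_i$.
\item Use an equivariant Moser argument on the normal bundle so that $r_{v_i}$ corresponds to rotating $z_i$.
\item Simply \emph{define} the second, tangential circle action near $C_i$ by transporting rotation of the other coordinate through $\iota$.
\item Arrange $\theta$ to be invariant under the resulting torus action.
\end{enumerate}
So the obstacle you name (no global $T^2$ because $r_{v_i}$ lives only near $\mathring{D}_{v_i}$) is not the real issue. $r_{v_i}$ is defined near all of $C_i$, and the tangential action comes for free from the disc parametrisation.

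\textbf{How $\kappa_i>\pi r^2$ closes the argument.} The moment image now contains neighbourhoods of both legs $[0,\pi r^2)\times\{0\}$ and $\{0\}\times[0,\pi r^2)$. Flowing towards $(\kappa_1,\kappa_2)$ sweeps out all segments from these legs to $(\kappa_1,\kappa_2)$. This union contains $\Delta=\{x_1,x_2\geq 0,\ x_1+x_2\leq \pi r^2\}$. Indeed, when $\kappa_i>\pi r^2$, both coordinates decrease along the ray from $(\kappa_1,\kappa_2)$ through any $x\in\Delta$ beyond $x$. Hence the ray leaves the quadrant through one of the legs. The swept region lies in $M\setminus L$ automatically, since it is obtained by flowing towards the skeleton from points off $L$. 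The torus fibration over $\Delta$ is then $B_r$ itself, with no Traynor-type trick needed.
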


\begin{proof}    
    We fix contractible open sets $C_i\subset D_i$ for $i=1,2$ containing $p$, such that $\omega(C_i)=\pi r^2$. Our local model near $p$ gives us a symplectic embedding $\iota:B_\epsilon\rightarrow M$ for some small $\epsilon>0$, such that $\iota(0)=p$, $r_{u}\circ \iota$ generates the diagonal $U(1)$-action, and for $i=1,2$, $\iota(\{z_i=0\}\cap B_\epsilon)\subset C_i$ and $r_{v_i}\circ \iota$ generates the standard $U(1)$-action on the $i$-th factor.

    For $i=1,2$, we can extend $\iota|_{\{z_i=0\}\cap B_\epsilon}$ to a symplectomorphism $\iota_i:\{z_i=0\}\cap B_r\rightarrow C_i$, by our assumption on $r$. We can choose an $\omega$-compatible metric $g$ on a neighbourhood of $C_i$ such that $\iota^*g$ and $\iota_i^*g$ are standard, and $g$ is invariant under the circle action generated by $r_{v_i}$. 
    
    The normal bundle for $C_i\subset M$ is symplectomorphic to $B_r\cap\{z_i=0\}\times \mathbb{C}$. By applying an equivariant Moser argument to the exponential map, we can extend $\iota$ to a neighbourhood $U_i$ of $\{z_i=0\}\cap B_r$, such that $\iota|_{\{z_i=0\}\cap B_r}=\iota_i$, and $r_{v_i}\circ\iota$ still generates the standard circle action on the $i$-th factor, where defined. We may assume $U_1\cap U_2\subset B_\epsilon$, so we can extend $\iota$ to a neighbourhood $U_1\cup U_2$ of $\{z_1z_2=0\}\cap B_r$.

    We can extend the functions $r_{v_i}$ to $\iota(U_1\cup U_2)$ for each $i$ in the standard way. Possibly after restricting to smaller neighbourhoods $U_i$ of $\{z_i=0\}\cap B_r$ for $i=1,2$, we may assume that $\theta$ is invariant under the torus action generated by $(r_{v_1},r_{v_2})$ on $\iota(U_1\cup U_2)$. Via the Liouville flow, we can extend $(r_{v_1},r_{v_2})$ along flowlines of $Z$ subject to the differential equation $dr_{v_i}(Z)=\kappa_{v_i}-r_{v_i}$. By our assumption on $r$, the image of the moment map $(r_{v_1},r_{v_2})$ contains the set $\Delta=\{0\leq r_{v_1},0\leq r_{v_2},r_{v_1}+r_{v_2}\leq \pi r^2\}$, so we can use the Lagrangian torus fibration $(r_{v_1},r_{v_2})$ to extend the domain and range of $\iota$ to obtain a symplectomorphism $\iota:B_r\rightarrow \Delta$.
\end{proof}

\begin{cor}
    In the case where $n=2$, and two of the hyperplanes $H_i$ are the coordinate hyperplanes, the bound on $W_G(M\setminus L)$ in Corollary \ref{generic_hyperplanes} is tight.
\end{cor}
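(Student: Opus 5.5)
The plan is to apply the preceding Lemma with $u=\{0\}\oplus\{0\}\oplus\mathbb{C}$ (the lift of the point $p=[0:0:1]$), $v_1=\tilde H_1$ and $v_2=\tilde H_2$, so that $D_1=H_1$ and $D_2=H_2$ are the two coordinate lines. This would produce balls $B_r\hookrightarrow\mathbb{CP}^2\setminus L$ for every $\pi r^2<2/\ell$. Combined with the upper bound $W_G(\mathbb{CP}^2\setminus L)\le 2/\ell$ of Corollary \ref{generic_hyperplanes}, this gives equality. Throughout I use the normalisation $\omega(\mathbb{CP}^1)=1$ of that corollary, and I take $D=\frac{1}{\ell}(H_1+\dots+H_\ell)$, so that $\pmb{\kappa}$ is Poincar\'e dual to $[\omega]$.

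\textbf{Step 1: the local hypotheses.} In the construction of Section \ref{hyperplane_arr_construction}, $\tilde H_1=\{z_0=0\}$ and $\tilde H_2=\{z_1=0\}$ are orthogonal. Hence $r_{v_1}=|z_0|^2/\|z\|^2$, $r_{v_2}=|z_1|^2/\|z\|^2$ and $r_u=r_{v_1}+r_{v_2}$ are defined on all of $\mathbb{CP}^2$, Poisson commute, and generate the standard $T^2$-action. Genericity implies that no other $H_j$ passes through $p$, so $\{w\in V\mid u<w\}=\{v_1,v_2\}$, $\mathring D_u=\{p\}$, and near $p$ we have exactly the model $\{z_1z_2=0\}\subset\mathbb{C}^2$. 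We also need $\theta$ to be $T^2$-invariant near $H_1\cup H_2$. This holds after the averaging remark in Definition \ref{action_def}, because the only components of $D$ near $p$ are $H_1$ and $H_2$, and these are $T^2$-invariant.

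\textbf{Step 2: the numbers.} We have $\omega(D_i)=1$ and $\kappa_{v_i}=\lambda_i=1/\ell$, so $\kappa_u=\kappa_{v_1}+\kappa_{v_2}=2/\ell$. Note that $2/\ell\le 1=\omega(D_i)$ since $\ell\ge3$. As stated, the Lemma only gives $\pi r^2<\min\{1,\kappa_1,\kappa_2\}=1/\ell$, which is half of what we need. So I would not quote it verbatim. Instead I would rerun its proof and sharpen the final step. The first part of that proof goes through unchanged. It extends $\iota$ equivariantly over neighbourhoods of the two discs $C_i\subset H_i$ of area $\pi r^2$, which only needs $\pi r^2<1$. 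It then extends $(r_{v_1},r_{v_2})$ along the Liouville flow via $dr_{v_i}(Z)=\kappa_{v_i}-r_{v_i}$.

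\textbf{Step 3 (main obstacle): the moment image.} The key claim is that the moment map $\mu=(r_{v_1},r_{v_2})$, extended along $Z$, gives a Lagrangian torus fibration over the region of $M\setminus L$ swept out by $Z$-flowlines starting near $H_1\cup H_2$. Its image should be the open triangle
\[
\{r_{v_1}\ge0,\ r_{v_2}\ge0,\ r_{v_1}+r_{v_2}<2/\ell\}
\]
with the single point $(\kappa_{v_1},\kappa_{v_2})=(1/\ell,1/\ell)$ removed. That point corresponds to where the flowlines converge onto the skeleton.

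Along the ODE, each flowline moves radially towards $(1/\ell,1/\ell)$. This point lies on the hypotenuse $r_{v_1}+r_{v_2}=2/\ell$. Consequently, the closed triangle $\Delta$ with $\pi r^2<2/\ell$ lies in this image and stays a positive distance from the limit point. I would check this concretely in the case $\ell=3$: there $L$ is the Clifford torus and $\mu$ is the honest toric moment map of $\mathbb{CP}^2$ up to an affine change. For general $\ell$ the point to verify is that the flowlines from the boundary edges $\{r_{v_i}=0\}$ and from the fibres over the triangle do not hit $L$ before $\mu$ reaches $(1/\ell,1/\ell)$. I expect this to follow from $T^2$-invariance of $\theta$ on the swept region together with the fact that the flow is radial in $\mu$-coordinates.

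Granting this, the toric ball $\mu^{-1}(\Delta)\cong B_r$ embeds in $M\setminus L$ for all $\pi r^2<2/\ell$. Hence $W_G(\mathbb{CP}^2\setminus L)\ge 2/\ell$, and together with Corollary \ref{generic_hyperplanes} this gives equality.
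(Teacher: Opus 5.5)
Your argument is the paper's own route: the orthogonal-crossings ball lemma applied at $p=H_1\cap H_2$. Your diagnosis is correct. The lemma's hypothesis as stated, $\pi r^2<\min\{\omega(D_1),\omega(D_2),\kappa_1,\kappa_2\}$, only yields $1/\ell$. Its proof, however, yields $\pi r^2<\kappa_{v_1}+\kappa_{v_2}=2/\ell$: the flowed-out moment image is the union of the two cones from $(\kappa_{v_1},\kappa_{v_2})$ over the edges, and this contains $\Delta$ as soon as that point lies beyond the hypotenuse. That sharper version is what the corollary actually needs.

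The step you flag as the main obstacle is automatic. Backward $Z$-flowlines starting off $L$ never reach $L$. Also, $\theta$ is invariant on the swept region by construction, because the torus action there is the conjugate, by the Liouville flow, of the action near $C_1\cup C_2$.

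Two descriptions need correcting, though. First, the image is that union of cones, not ``the open triangle minus $(1/\ell,1/\ell)$''; that point lies on the triangle's boundary anyway. Second, $\theta$ can only be made torus-invariant near $C_1\cup C_2$, where the discs $C_i$ avoid the other points $H_i\cap H_k$. It cannot be made invariant near all of $H_1\cup H_2$, since the other lines $H_k$ are not preserved by the torus.
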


\section*{Acknowledgements}
The author thanks Jonny Evans for many informative conversations and suggestions during the preparation of this paper.

For the purpose of open access, the author has applied a Creative Commons Attribution (CC BY) licence to any Author Accepted Manuscript version arising.
\printbibliography

@article{Gro,
author = {Gromov, M.},
journal = {Inventiones mathematicae},
pages = {307-348},
title = {Pseudo holomorphic curves in symplectic manifolds.},
url = {http://eudml.org/doc/143289},
volume = {82},
year = {1985},
}

@misc{Wendl,
      title={Lectures on Symplectic Field Theory}, 
      author={Chris Wendl},
      year={2016},
      eprint={1612.01009},
      archivePrefix={arXiv},
      primaryClass={math.SG},
      url={https://arxiv.org/abs/1612.01009}, 
}

@misc{Lu,
      title={An extension of Biran's Lagrangian barrier theorem}, 
      author={Guangcun Lu},
      year={2004},
      eprint={math/0111183},
      archivePrefix={arXiv},
      primaryClass={math.SG},
      url={https://arxiv.org/abs/math/0111183}, 
}

@article{CM,
author = {Cieliebak, Kai and Mohnke, Klaus},
year = {2005},
month = {12},
pages = {},
title = {Compactness for punctured holomorphic curves},
volume = {3},
journal = {Journal of Symplectic Geometry},
doi = {10.4310/JSG.2005.v3.n4.a5}
}

@article{Biran,
  title={Lagrangian barriers and symplectic embeddings
},
  author={Paul Biran},
  journal={Geometric \& Functional Analysis GAFA},
  year={2001},
  volume={11},
  pages={407-464},
  url={https://api.semanticscholar.org/CorpusID:122402893}
}

@Article{GS,
title = {Convexity properties of the moment mapping},
journal = {Inventiones Mathematicae},
volume = {67},
pages = {491-513},
year = {1982},
author = {Victor Guillemin and Shlomo Sternberg}
}

@article{Bou,
   title={Compactness results in Symplectic Field Theory},
   volume={7},
   ISSN={1465-3060},
   url={http://dx.doi.org/10.2140/gt.2003.7.799},
   DOI={10.2140/gt.2003.7.799},
   number={2},
   journal={Geometry \&; Topology},
   publisher={Mathematical Sciences Publishers},
   author={Bourgeois, Frederic and Eliashberg, Yakov and Hofer, Helmut and Wysocki, Kris and Zehnder, Eduard},
   year={2003},
   month=dec, pages={799–888} }

@misc{Ga,
      title={Superheavy Skeleta for non-Normal Crossings Divisors}, 
      author={Elliot Gathercole},
      year={2024},
      eprint={2408.13187},
      archivePrefix={arXiv},
      primaryClass={math.SG},
      url={https://arxiv.org/abs/2408.13187}, 
}

@misc{Long_iteration,
      title={Index iteration theory for symplectic paths with applications to nonlinear Hamiltonian systems}, 
      author={Yiming Long},
      year={2003},
      eprint={math/0304265},
      archivePrefix={arXiv},
      primaryClass={math.DG},
      url={https://arxiv.org/abs/math/0304265}, 
}

@article{Long_index,
author = {Long, Yiming},
year = {1997},
month = {09},
pages = {},
title = {A Maslov-type index theory for symplectic paths},
volume = {10},
journal = {Topological Methods in Nonlinear Analysis},
doi = {10.12775/TMNA.1997.021}
}

@misc{Wendl2,
      title={Lectures on Holomorphic Curves in Symplectic and Contact Geometry}, 
      author={Chris Wendl},
      year={2014},
      eprint={1011.1690},
      archivePrefix={arXiv},
      primaryClass={math.SG},
      url={https://arxiv.org/abs/1011.1690}, 
}

@misc{TMZ,
      title={Normal Crossings Singularities for Symplectic Topology: Structures}, 
      author={Mohammad Farajzadeh Tehrani and Mark McLean and Aleksey Zinger},
      year={2021},
      eprint={2112.13125},
      archivePrefix={arXiv},
      primaryClass={math.SG},
      url={https://arxiv.org/abs/2112.13125}, 
}

@misc{Bre,
      title={Pinwheels as Lagrangian barriers}, 
      author={Joé Brendel and Felix Schlenk},
      year={2022},
      eprint={2210.00280},
      archivePrefix={arXiv},
      primaryClass={math.SG},
      url={https://arxiv.org/abs/2210.00280}, 
}

@article{Tian_2012,
   title={Symplectic geometry of rationally connected threefolds},
   volume={161},
   ISSN={0012-7094},
   url={http://dx.doi.org/10.1215/00127094-1548398},
   DOI={10.1215/00127094-1548398},
   number={5},
   journal={Duke Mathematical Journal},
   publisher={Duke University Press},
   author={Tian, Zhiyu},
   year={2012},
   month=apr }
\end{document}